\theoremstyle{plain}
\newtheorem{theorem}{Theorem}
\newtheorem{corollary}[theorem]{Corollary}
\newtheorem{lemma}[theorem]{Lemma}
\newtheorem{proposition}[theorem]{Proposition}
\theoremstyle{remark}
\newtheorem{remark}[theorem]{Remark}
\numberwithin{equation}{section}
\numberwithin{theorem}{section}
\newcommand{\R}{\mathbb{R}}
\newcommand{\N}{\mathbb{N}}
\newcommand{\Domain}{\Omega}
\newcommand{\Dim}{d}
\newcommand{\Normal}{\mathsf{n}}
\newcommand{\Poly}{P}
\newcommand{\dt}{}
\newcommand{\Grad}{\nabla}
\newcommand{\SymGrad}{\Grad_S}
\newcommand{\Div}{\mathrm{div}}
\newcommand{\Norm}[1]{\| #1 \|}
\newcommand{\Normtr}[1]{ \lvert\!\lvert\!\lvert{#1} \rvert\!\rvert\!\rvert}
\newcommand{\bbU}{\mathbb{U}}
\newcommand{\bbP}{{\mathbb{P}}}
\newcommand{\bbD}{\mathbb{D}}
\newcommand{\bbY}{\mathbb{Y}}
\newcommand{\bbX}{\mathbb{X}}
\newcommand{\bbE}{\mathbb{E}}
\newcommand{\bbL}{\mathbb{L}}
\newcommand{\calE}{\mathcal{E}}
\newcommand{\calD}{\mathcal{D}}
\newcommand{\calL}{\mathcal{L}}
\newcommand{\calP}{\mathcal{P}}
\newcommand{\calQ}{\mathcal{Q}}
\newcommand{\Tot}{\text{tot}}
\begin{document}

\title[Inf-sup theory for the quasi-static Biot's equations]{Inf-sup theory for the quasi-static\\ Biot's equations in poroelasticity}

\author[C.~Kreuzer]{Christian Kreuzer}
\address{TU Dortmund \\ Fakult{\"a}t f{\"u}r Mathematik \\ D-44221 Dortmund \\ Germany}
\email{christian.kreuzer@tu-dortmund.de}

\author[P.~Zanotti]{Pietro Zanotti}
\address{Universit\`{a} degli Studi di Milano, Dipartimento di Matematica, 20131, Milano, Italy}
\email{pietro.zanotti@unimi.it}

\keywords{Inf-sup theory, quasi-static Biot's equations, poroelasticity, well-posedness, robustness, a posteriori analysis}

\subjclass[2010]{35Q35, 35Q74, 74F10, 76S05}

%
%
%
%
%
%

\begin{abstract}
We analyze the two-field formulation of the quasi-static Biot's
equations in bounded domains by means of the inf-sup theory. For this purpose, we exploit
an equivalent four-field formulation of the equations, introducing the
so-called total pressure and total fluid content as independent
variables. We establish existence, uniqueness and stability of the
solution. Our stability estimate is two-sided and robust, meaning that
the regularity established for the solution matches the regularity requirements for the data and
the involved constants are independent of all material parameters. We
prove also that additional regularity in space of the data implies, in
some cases, corresponding additional regularity in space of the
solution. These results are instrumental to the design and the
analysis of discretizations enjoying accurate stability and error
estimates.  
\end{abstract} 

\maketitle

\section{Introduction}
\label{S:introduction}

The analysis and the discretization of the quasi-static Biot's equations have been the subject of several studies in recent years. The equations arise in the
theory of poroelasticity and model the flow of a fluid inside an
elastic medium. They fit into the abstract framework
\begin{equation}
\label{E:linear-equations}
\mathcal{B} y = \ell
\end{equation}
for a linear operator $\mathcal{B}$, with $y$ and $\ell$ denoting the
solution and the load, respectively; see \eqref{E:BiotProblem} for the
specific definitions.  

In this paper, we develop some analytical results to be used in a follow-up paper \cite{Kreuzer.Zanotti:22+} regarding the discretization of the Biot's equations. More precisely, for bounded domains, we establish existence, uniqueness, two-sided stability and regularity in space of the solution by means of the inf-sup theory, i.e., via
the so-called Banach-Ne\u{c}as-Babu\v{s}ka theorem,
cf. \cite{Necas:62}. This approach and various aspects of our results
are new to our best knowledge. The main advantage stemming from the
use of the inf-sup theory is that we obtain a two-sided stability
estimate in the form 
\begin{equation}
\label{E:linear-equations-stability}
\Norm{y}_1 \eqsim \Norm{\ell}_{2,*}.
\end{equation}
In other words, differently from previous works, we are able
to prove that the operator $\mathcal{B}$ in \eqref{E:linear-equations}
establishes an isomorphism between the space for the solution and the one
for the load. Hence, the regularity established for the solution matches the regularity requirements for the data. In passing, we relax also the assumption made on the regularity of the load in previous references.

A major challenge in our analysis
consists in selecting the norms in
\eqref{E:linear-equations-stability}, i.e., in characterizing the
regularity of the solution and of the load in the equations. The
difficulty hinges on the action of the differential operator
$\mathcal{B}$ in \eqref{E:linear-equations}, which couples the two
components of the solution (the displacement of the elastic medium and
the pressure of the fluid) in a highly nontrivial way,
cf. \eqref{E:BiotProblem-equations} below. We deal with this
issue by considering an equivalent four-field formulation of the
Biot's equations from \cite{Khan.Zanotti:22}, that is obtained by
introducing the so-called total pressure and total fluid content as
independent variables. The new
formulation motivates the definition of the norm $\Norm{\cdot}_{2,*}$
as a product norm. The definition of the norm $\Norm{\cdot}_1$ is more
involved, as it still couples the regularity of two components of the
solution. This
coupling of the regularity is indeed necessary.

Once the norms are selected, we establish the above-mentioned results
by the inf-sup theory. This technique, differently from other ones,
always implies a two-sided stability estimate, like
\eqref{E:linear-equations-stability}, when it can be applied. For this
purpose, one has to verify that the bilinear form induced by the
operator $\mathcal{B}$ in \eqref{E:linear-equations} fulfills three
properties: boundedness, inf-sup stability and nondegeneracy. The use
of the inf-sup theory was made popular in numerical analysis by the
pioneering works of Babu\v{s}ka \cite{Babuska:70} and Brezzi
\cite{Brezzi:74}, who proposed the technique to obtain accurate
stability and a priori error estimates for a linear equation and its
discretization. Later on inf-sup stability has been noticed to be
important also for a posteriori error estimation \cite{Verfuerth:13}
and for the convergence of adaptive discretizations
\cite{Feischl:22,Morin.Siebert.Veeser:08}. Still, for some reason, the
use of the inf-sup theory has been mostly confined to stationary
equations and only recently there have been attempts to apply it to
evolutionary ones, see
\cite{Ern.Guermond:21c,Steinbach.Zank:22,Tantardini.Veeser:16}.  

This paper aims at further contributing to the development of the
infs-sup theory for evolution equations, developing tools to be later
used for the numerical analysis in \cite{Kreuzer.Zanotti:22+} as
well. We intend also to highlight the benefits of our technique in
comparison with other ones. In addition to the early contribution
\cite{Auriault.SanchezPalencia:77} (restricted to a rather specific
case), we are aware of two other approaches to the analysis of the
Biot's equations.  

\v{Z}en\'{\i}\v{s}ek \cite{Zenisek:84} used the
so-called Faedo-Galerkin (Rothe's) method, in combination with a ingenious way of
testing the equations, in order to infer stability. This technique is
quite popular also in numerical analysis for the derivation of error
estimates, see e.g. \cite{Phillips.Wheeler:07a}. Other results in this
flavor can be found in
\cite{Botti.Botti.DiPietro:21,Li.Zikatanov:22,Owczarek:10}.  Another
possible technique is the one of Showalter \cite{Showalter:00}, who
studied both strong and weak solutions by the theory of implicit
evolution 
equations. Both approaches assume more regular data than our one and
do not establish an equivalence like
\eqref{E:linear-equations-stability}. Extensions to nonlinear problems
in poromechanics are found, e.g., in
\cite{Bociu.Guidoboni.Sacco.Webster:16,Bociu.Muha.Webster:22,Bociu.Webster:21,Cao.Chen.Meir:13,Cao.Chen.Meir:14}.  

Finally, motivated by the numerical analysis in
\cite{Kreuzer.Zanotti:22+}, we are interested also in shift theorems,
i.e., in determining if more regular data give rise to more regular
solutions. We give a positive answer for the regularity in space,
under a set of relatively restrictive assumptions, by using again the
inf-sup theory. The relaxation of our assumptions and the regularity
in time are more challenging tasks and we do not discuss them
here. For instance, it is known that the time derivative of the
solution can be singular at the initial time,
cf. \cite[Section~2]{Murad.Thomee.Loula:96} and
\cite[Section~3]{Showalter:00}. We are aware of only few other
regularity results, see \cite{Botti.Botti.DiPietro:21,Yi:17}. 

\subsection*{Contribution} Summarizing, we propose a new approach to
the analysis of the Biot's equations in bounded domains, which is used to establish both
well-posedness and additional regularity in space. Differently from
previous contributions, we make a weaker regularity assumption on the
data and we make sure that it matches with the regularity guaranteed
for the solution by establishing two-sided stability estimates like
\eqref{E:linear-equations-stability}. All constants in our bounds are
robust with respect to the time horizon and the material parameters in the equations. In
particular, we treat at the same time the critical case of vanishing
and nonvanishing specific storage coefficient, also in combination
with general boundary conditions. 

\subsection*{Organization} In Section~\ref{S:BiotEquations} we recall
the Biot's equations and introduce the setting for our analysis. In
Section~\ref{S:WellPosedness} we establish existence, uniqueness and
two-sided stability of the solution. In
Section~\ref{S:Stability-Additional} we investigate the stability
estimate more extensively. In Section~\ref{S:Regularity} we establish
additional regularity in space. 

\subsection*{Notation} We denote by $L^2(\bbX)$, $H^1(\bbX)$ and $C^0(\bbX)$ the
spaces of all $L^2$, $H^1$ and $C^0$ functions mapping the time
interval $[0, T]$ into a Banach space $\bbX$, equipped with the norm
$\Norm{\cdot}_\bbX$. The symbol $\left\langle \cdot, \cdot
\right\rangle_\bbX$ indicates the duality of $\bbX$ and $\bbX^*$. For
$\bbX = L^2(\Domain)$, $\Domain \subseteq \R^\Dim$, we use the
abbreviations $\Norm{\cdot}_\Domain$ for the norm and $(\cdot,
\cdot)_\Domain$ for the corresponding scalar product.  We write $a
\lesssim b$ and 
$a \eqsim b$ when there are constants $0 < \underline{c} \leq
\overline{c}$ such that $a \leq \overline{c}\,b$ and $\underline{c}
\,b \leq a \leq \overline{c}\,b$, respectively. As a rule of thumb,
the hidden constants are independent of the time horizon and of the material parameters
involved in the equations. The dependence on other relevant quantities
is addressed case by case.

\section{Biot's equations and abstract weak formulation}
\label{S:BiotEquations}

In this section we introduce the Biot's equations and propose a framework for analyzing them by the inf-sup theory.

\subsection{Two-field formulation}
\label{SS:two-field}

Let $\Domain \subseteq \R^\Dim$, $\Dim \in \N$, be a bounded domain (i.e. a bounded, open and connected set) whose boundary can be locally represented as the graph of a
Lipschitz-continuous function. The quasi-static Biot's equations in
$\Domain \times (0,T)$, $T > 0$, read as 
\begin{subequations}
\label{E:BiotProblem}
\begin{equation}
\label{E:BiotProblem-equations}
\begin{alignedat}{2}
 -\Div( 2 \mu \SymGrad u + (\lambda \Div u  - \alpha p)I) &= f_u \qquad &\text{in $\Domain \times (0,T)$  }\\
\partial_t (\alpha \Div u + \sigma p) - \Div(\kappa \Grad p) &= f_p \qquad  &\text{in $\Domain \times (0,T)$.}
\end{alignedat}
\end{equation}
The equations model the flow of a Newtonian fluid inside a linear
elastic porous medium. The first one states the momentum balance, the
second one states the mass balance. The unknowns are the displacement
$u: \Domain \times (0,T) \to \R^\Dim$ of the medium and the pressure
$p: \Domain \times (0,T) \to \R$ of the fluid. The symbols $\SymGrad$
and $I$ denote the symmetric part of the gradient and the $\Dim \times
\Dim$ identity tensor respectively. Moreover, the following material parameters are
involved: the Lam\'{e} constants $\mu, \lambda > 0$, the Biot-Willis
constant $\alpha~>~0$, the constrained specific storage coefficient
$\sigma \geq 0$ and the hydraulic conductivity $\kappa > 0$. For
simplicity, we assume that 
\begin{center}
	all parameters are constant in $\Domain \times (0,T)$.
\end{center}

\begin{remark}[Parameters]
\label{R:parameters}
Our subsequent analysis could be applied, up to minor modifications,
under the assumption that some of the material parameters are functions of space and/or time with uniform upper and lower bound. Of course, the ratio of the upper and the lower bound
would affect the constants in our estimates. A restriction in our approach is that $\alpha$ should be constant in space, because we sometimes
commute it with space derivatives. Similarly, $\kappa$ should be
constant in time. Finally, although we can
treat also the case $\sigma = 0$, 
it is unclear to us whether this parameter can be zero and nonzero in
different regions of $\Domain \times (0, T)$. The assumption on $\kappa$ is especially critical, because it prevents from a direct extension of our analysis to nonlinear poroelastic models, where $\kappa$ may depend on the solid dilation $\Div u$, see e.g.
\cite[Section~2.3]{Bociu.Muha.Webster:22}.
\end{remark}

We complement the Biot's equations \eqref{E:BiotProblem-equations} by the initial condition 
\begin{equation}
\label{E:BiotProblem-initialcondition}
(\alpha \Div u + \sigma p)_{|t=0} = \ell_0 \quad \text{in} \quad \Domain
\end{equation}
and by the boundary conditions 
\begin{equation}
\label{E:Biot-BCs-mixed}
\begin{alignedat}{2}
u &= 0
&\quad &\text{on} \quad \Gamma_{u,E} \times (0,T)\\
(2 \mu \SymGrad u + (\lambda \Div u - \alpha p) I)\Normal &= g_{u}&
\quad& \text{on} \quad \Gamma_{u,N} \times (0,T) \\
p &= 0
&\quad &\text{on} \quad \Gamma_{p,E} \times (0,T)\\
\kappa \Grad p \cdot \Normal &= g_{p}
&\quad &\text{on} \quad \Gamma_{p,N} \times (0,T)
\end{alignedat}
\end{equation}
where $\Gamma_{u,E} \cup \Gamma_{u,N} = \partial \Domain =
\Gamma_{p,E} \cup \Gamma_{p,N}$ and $\Gamma_{u,E} \cap \Gamma_{u,N} =
\emptyset = \Gamma_{p,E} \cap \Gamma_{p,N}$. The letter $\Normal$
denotes the outward unit normal vector on $\partial \Domain$. The
subscripts `$E$' and `$N$' indicate essential and natural boundary
conditions, respectively. Inhomogeneous essential boundary conditions
can be treated as usual, by modifying the data in the equations
\eqref{E:BiotProblem-equations}.  
\end{subequations} 

\begin{remark}[Initial condition]
\label{R:initial-condition}
The time derivative acts in \eqref{E:BiotProblem-equations} only on a combination of $u$ and $p$, namely $\alpha \Div u + \sigma p$, thus suggesting that only the initial value of such auxiliary variable should be prescribed as done, e.g., in \cite[sections~3-4]{Showalter:00}. Still, different formulations are sometimes encountered in numerical analysis. Phillips and Wheeler
\cite{Phillips.Wheeler:07a} suggest to set $p(0)$ equal to
the hydrostatic pressure. Then, they evaluate the first equation in
\eqref{E:BiotProblem-equations} at $t=0$ and solve a linear elasticity
problem for $u(0)$. Other authors, see
e.g. \cite{Murad.Loula:92}, assume $\sigma = 0$ and set $\Div
u(0) = 0$ (that is equivalent to
\eqref{E:BiotProblem-initialcondition} with $\ell_0  = 0$). Then, they
evaluate the first equation in \eqref{E:BiotProblem-equations} at
$t=0$ and solve a Stokes problem for $u(0)$ and $p(0)$. In
other references, the values of $u(0)$ and $p(0)$ are just prescribed,
see e.g. \cite{Li.Zikatanov:22}. In this case, the compatibility of the prescribed initial values with the other data can be problematic and give rise to irregular behaviors, see \cite{Verri.Guidoboni.Bociu.Sacco:18}. 
\end{remark}

\begin{remark}[Boundary conditions]
\label{R:boundary-conditions}
The boundary conditions considered in \cite{Showalter:00} are slightly more sophisticated than~\eqref{E:Biot-BCs-mixed} in that they allow for a coupling on the intersection of the nonessential parts of the boundary \(\Gamma_{u,N}\cap\Gamma_{p,N}\), which requires additional regularity of the data and compatibility conditions on the initial values. In all other references we are aware of (from both the analytical and the numerical side), the boundary conditions~\eqref{E:Biot-BCs-mixed} (see \cite{Phillips.Wheeler:07a,Zenisek:84}) or simplified versions thereof are considered. 
\end{remark}

\begin{remark}[Unbounded domains]
\label{R:unbounded-domains}
Assuming that $\Domain$ is bounded excludes some relevant test cases from our analysis. For instance, \cite[Section~5.2]{Murad.Loula:92} considers the Biot's equations on a two-dimensional infinite strip, i.e. a domain that is bounded in one direction and unbounded in the other one. The application of our analysis to such domains is obstructed by the combination of two key technical tools. On the one hand, the constant $c$ in the first part of \eqref{E:inf-sup-B} depends on the anisotropy of $\Domain$. On the other hand, we need a Poincaré inequality for \eqref{E:abstract-setting-spaces-ptot-mb} and this requires that $\Domain$ is bounded at least along one direction.
\end{remark}

\subsection{Four-field formulation}
\label{SS:four-field}

Our starting point for the analysis of the initial-boundary value problem~\eqref{E:BiotProblem} are the results established in \cite{Khan.Zanotti:22}. In that reference, the stationary equations obtained after a time semi-discretization of \eqref{E:BiotProblem-equations} are considered. The inf-sup theory developed in \cite[Section~2]{Khan.Zanotti:22} reveals that, in the stationary case, it is possible to control two auxiliary variables, in addition to the displacement $u$ and the pressure $p$, namely the total pressure
\begin{subequations}
\label{E:auxiliary-variable}
\begin{equation}
\label{E:total-pressure}
p_\Tot := \lambda \Div u - \alpha \calP_\bbD p
\end{equation}
and the total fluid content 
\begin{equation}
\label{E:total-fluid-content}
m := \calP_{\overline \bbP}(\alpha \Div u) + \sigma p.
\end{equation} 
\end{subequations}
The operators $\calP_\bbD$ and $\calP_{\overline \bbP}$ are $L^2(\Domain)$-orthogonal projections, whose specific definition can be found in Section~\ref{SS:weak-formulation} below. Note that the definition of $p_\Tot$ is reminiscent of the  Herrmann mixed formulation of the linear elasticity equations, see \cite[Section~8.12.1]{Boffi.Brezzi.Fortin:13}.

Treating the total pressure and the total fluid content as independent unknowns leads to the following formulation of the Biot's equations~\eqref{E:BiotProblem-equations}
\begin{equation}
\label{E:BiotProblem-equations-fourfield}
\begin{alignedat}{2}
-\Div( 2 \mu \SymGrad u + p_\Tot I) &= f_u \qquad & \text{in $\Domain \times (0,T)  $ }\\
\lambda \Div u - p_\Tot - \alpha \calP_\bbD p &= 0 \qquad & \text{in $\Domain \times (0,T)  $ }\\
\alpha \calP_{\overline \bbP}\Div u + \sigma p - m &= 0 \qquad & \text{in $\Domain \times (0,T)  $ }\\
\partial_t m - \Div(\kappa \Grad p) &= f_p \qquad  & \text{in $\Domain \times (0,T).$}
\end{alignedat}
\end{equation}
Our use of the projections $\calP_\bbD$ and $\calP_{\overline \bbP}$ is motivated by our subsequent choice of the test functions for the equations. Similarly to \cite{Khan.Zanotti:22}, the analysis in the next sections equivalently applies to the original two-field formulation
\eqref{E:BiotProblem-equations} or to the above four-field formulation
of the Biot's equations. In our perspective, working with the latter one is slightly more convenient, cf. Remark~\ref{R:two-field-formulation} below.  

\begin{remark}[Auxiliary variables]
\label{R:AuxiliaryVariables}
Introducing auxiliary variables as independent unknowns is a common practice in numerical analysis, which can foster the construction of discretizations with specific stability and/or approximation properties. To our best knowledge, the idea of introducing the total pressure
$p_\Tot$ is relatively recent and dates back
to \cite{Lee.Mardal.Winther:17,Oyarzua.RuizBaier:16}. In contrast, we are not aware of any reference paying specific attention to the approximation of the total fluid content $m$, although this is a relevant variable, as pointed out by the formulation \eqref{E:BiotProblem-equations-fourfield}, the initial condition \eqref{E:BiotProblem-initialcondition} and also previous theoretical results like those in \cite[sections~3-4]{Showalter:00}.
\end{remark} 
 
\subsection{Weak formulation}
\label{SS:weak-formulation}

We propose the weak formulation of the equations
\eqref{E:BiotProblem-equations-fourfield}, with the initial and boundary conditions
\eqref{E:BiotProblem-initialcondition} and \eqref{E:Biot-BCs-mixed},
that is the subject of our analysis in the next sections. Some minor
differences are possible in the definition of the function spaces
involved in such formulation, depending on the boundary conditions and
on whether the constrained specific storage coefficient $\sigma$ vanishes or not. Therefore, we
introduce an abstract setting in order to treat all possible cases
simultaneously. 

The second-order elliptic operators involved in
\eqref{E:BiotProblem-equations-fourfield} and the boundary conditions
\eqref{E:Biot-BCs-mixed} suggest that $u$ and $p$ should be functions
with values in the spaces 
\begin{subequations}
\begin{align}
\label{E:abstract-setting-spaces-u-p}
\bbU &:= \begin{cases}
H^1(\Domain)^\Dim\cap \mathrm{RM}^\perp & \text{if $\Gamma_{u,N} = \partial \Domain$}\\
H^1_{\Gamma_{u,E}}(\Domain)^\Dim & \text{otherwise} 
\end{cases}\\
\intertext{and}
\label{E:abstract-setting-spaces-p}
\bbP &:= \begin{cases}
H^1(\Domain)\cap L^2_0(\Domain) & \text{if $\Gamma_{p,N} = \partial \Domain$}\\
H^1_{\Gamma_{p,E}}(\Domain) \cap L^2_0(\Domain) & \text{if $\Gamma_{p,N} \neq \partial \Domain, \Gamma_{u,E} = \partial \Domain, \sigma = 0$}\\
H^1_{\Gamma_{p,E}}(\Domain) & \text{otherwise.}
\end{cases} 
\end{align}
\end{subequations}
The definition of $\bbU$ and $\bbP$ in the pure Neumann case involves the $L^2$-orthogonal complement of rigid body motions $\mathrm{RM}$ in $L^2(\Domain)^\Dim$ and of constant functions in $L^2(\Domain)$. More precisely, we set $\mathrm{RM}^\perp := \{ q \in L^2(\Domain)^\Dim \mid (q,r)_\Domain = 0\; \forall r \in \mathrm{RM} \}$ and $L^2_0(\Domain) := \{ q \in L^2(\Domain) \mid \int_{\Domain} q = 0\}$.

\begin{remark}[Pressure space]
\label{R:pressure-space}
The second case in~\eqref{E:abstract-setting-spaces-p} is nonstandard, because it prescribes
  both the essential condition on a portion of the boundary and the zero
  integral mean in $\Domain$. In previous approaches, like e.g. \cite{Zenisek:84}, the boundedness of the pressure in the $L^2(H^1(\Domain))$-norm
  is established under a regularity assumption on the data. Thus, the $L^2(L^2(\Domain))$-norm can be controlled by a Poincar\'{e} inequality. In our approach, we assume less regular data, therefore we do not control the $L^2(H^1(\Domain))$-norm of the pressure, cf. Proposition~\ref{P:rough-trial-functions}. Hence, it appears for
  \(\Gamma_{u,E}=\partial\Omega\) and \(\sigma=0\), that we can
  only bound the $L^2(L^2(\Domain))$-norm of the pressure up to
  constant functions in space and thus the restriction to mean value free functions is needed. This informal guess is made more rigorous in Lemma~\ref{L:inf-sup}.
\end{remark}

Owing to \eqref{E:auxiliary-variable}, we regard $p_\Tot$ and $m$ as
functions with values in the following closed subspaces of $L^2(\Domain)$:
\begin{subequations}\label{E:abstract-setting-spaces-ptot-m}
  \begin{align}
    \label{E:abstract-setting-spaces-ptot-ma}
    \bbD := \Div(\bbU) &= \begin{cases}
      L^2_0(\Domain) &\text{if  $\Gamma_{u,E} = \partial \Domain$}\\
      L^2(\Domain) &\text{otherwise}
    \end{cases}
    \intertext{and}\label{E:abstract-setting-spaces-ptot-mb}
    \overline{\bbP} &= \begin{cases}
      L^2_0(\Domain) & \text{if
        $\Gamma_{p,N} = \partial \Domain$ or $\Gamma_{u,E} = \partial \Domain, \sigma = 0$}\\ 
      L^2(\Domain) & \text{otherwise.}
    \end{cases}
  \end{align}
\end{subequations}
The closure in the definition of $\overline{\bbP}$ is taken with respect to the $L^2(\Domain)$-norm. We denote by $\calP_\bbD$ and $\calP_{\overline \bbP}$ the $L^2$-orthogonal projections onto $\bbD$ and $\overline{\bbP}$, respectively.  

We equip the spaces $\bbU$
and $\bbP$ with $H^1(\Domain)$-like-norms scaled by $\sqrt{2\mu}$ and
$\sqrt{\kappa}$, respectively, and the spaces $\bbD$ and
\(\overline{\bbP}\) with the
$L^2(\Domain)$-norm. More precisely, we set
\begin{equation}
\label{E:norms}
\Norm{\cdot}_\bbU := \Norm{\sqrt{2\mu} \SymGrad\cdot}_{\Domain}
\qquad \text{and} \qquad
\Norm{\cdot}_\bbP := \Norm{\sqrt{\kappa} \Grad \cdot}_{\Domain}.
\end{equation}
Note that, by Korn's and Poincar\`e-Friedrichs inequalities, these are
indeed norms on their respective spaces. By standard functional analysis arguments, we have that
\begin{equation*}
\label{E:H0}
\bbP \subseteq \overline{\bbP} \quad  \text{is a dense compact subspace}.
\end{equation*}
We identify $\overline{\bbP}$ with $\overline{\bbP}^*$ via the
$L^2(\Domain)$-scalar product. Hence, $\bbP \subseteq \overline{\bbP}
\equiv \overline{\bbP}^* \subseteq\bbP^*$ is a Hilbert triplet and the
duality $\left\langle \cdot,\, \cdot \right\rangle_\bbP$ coincides with
$( \cdot,\, \cdot)_\Domain$ when both arguments are in
$\overline{\bbP}$.  

We introduce an abstract notation also
for the (weak form of) the differential operators involved in the
Biot's equations, so as to make many formulae more compact. We denote
by $\calE: \bbU \to \bbU^*$ and $\calL: \bbP \to \bbP^*$ the elliptic
operators acting on $u$ and $p$, respectively, and by $\calD: \bbU \to
\bbD$ the divergence, namely  
\begin{equation}
\label{E:abstract-operators-concrete}
\calE := - \Div (2\mu  \SymGrad) 
\qquad \qquad
\calD := \Div
\qquad \qquad
\calL := -\Div(\kappa \Grad ).
\end{equation} 
Note that we can regard the adjoint $\calD^*$ of $\calD$ as an operator acting on $\bbD$ upon identifying also this space with its dual $\bbD^*$ via the $L^2(\Domain)$-scalar product. Figure~\ref{F:abstract-spaces-diagram} summarizes the relation between the abstract spaces and operators. 
\begin{figure}[ht]
	\[
	\xymatrixcolsep{4pc}
	\xymatrixrowsep{4pc}
	\xymatrix{
		\bbU^* 
		\ar@{<-}[r]^{\calE}
		\ar@{<-}[d]^{\calD^*}
		&
		\bbU
		\ar[d]_{\calD}
		&
		L^2(\Domain)
		\ar[dl]_{\calP_\bbD}
		\ar[dr]^{\calP_{\overline{\bbP}}}
		&
		\bbP
		\ar[r]^{\calL}
		\ar[d]^{i}
		&
		\bbP^*
		\ar@{<-}[d]_{i^*}\\
		\bbD^*
		\ar@3{-}[r]
		&
		\bbD
		&&
		\overline{\bbP}
		\ar@3{-}[r]&
		\overline{\bbP}^*
	}
	\]
	\caption{\label{F:abstract-spaces-diagram} Spaces and operators describing the regularity in space for the weak formulation \eqref{E:BiotProblem-weak-formulation} of the Biot's equations. The symbol `$\equiv$' denotes the identification via the $L^2(\Domain)$-scalar product and $i$ is the embedding operator.}
\end{figure}
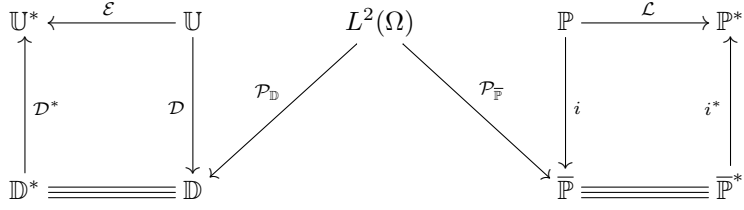

By comparing \eqref{E:abstract-operators-concrete} with the definition of the norms \eqref{E:norms}, we readily infer the identities
\begin{equation}
\label{E:coercivity-E-A}
\Norm{\cdot}_\bbU^2 = \left\langle \calE \cdot,\, \cdot \right\rangle_\bbU 
\qquad \text{and} \qquad 
\Norm{\cdot}_\bbP^2 = \left\langle \calL \cdot,\, \cdot\right\rangle_\bbP.
\end{equation}
Then, by duality, we obtain
\begin{equation}
\label{E:coercivity-E-A-duality}
\Norm{\cdot}^2_{\bbU^*} = \left\langle \cdot,\, \calE^{-1} \cdot \right\rangle_{\bbU} 
\qquad \text{and} \qquad
\Norm{\cdot}^2_{\bbP^*} = \left\langle \cdot,\, \calL^{-1} \cdot \right\rangle_\bbP.  
\end{equation}
Furthermore, the surjectivity of the divergence and the open mapping theorem \cite[Lemma~53.9]{Ern.Guermond:21b} imply
\begin{equation}
\label{E:inf-sup-B}
\dfrac{c}{\mu} \Norm{\cdot}_\Domain^2 \leq \Norm{\calD^* \cdot }_{\bbU^*}^2 \leq \dfrac{C}{\mu} \Norm{\cdot}_\Domain^2
\qquad \text{in $\bbD$}
\end{equation}
with constants $0 < c \leq C$ depending only on $\Domain$. 

After this preparation, we are in position to state the abstract weak formulation of the equations \eqref{E:BiotProblem-equations-fourfield}, with \eqref{E:BiotProblem-initialcondition} and \eqref{E:Biot-BCs-mixed}, as follows
\begin{equation}
\label{E:BiotProblem-weak-formulation}
\begin{alignedat}{2}
\calE u + \calD^* p_\Tot &= \ell_u \qquad && \text{in $L^2(\bbU^*)$}\\
\lambda \calD u - p_\Tot - \alpha \calP_\bbD p &= 0 \qquad && \text{in $L^2(\bbD)$}\\
\alpha \calP_{\overline \bbP} \calD u + \sigma p - m &= 0 \qquad && \text{in $L^2(\overline{\bbP})$}\\
\partial_t m + \calL p &= \ell_p \qquad  && \text{in $L^2(\bbP^*)$}\\
m(0) &= \ell_0 \qquad && \text{in $\bbP^*$}.
\end{alignedat}
\end{equation}
The loads $\ell_u$ and $\ell_p$ result from the data in the
equations~\eqref{E:BiotProblem-equations-fourfield} and in the
boundary conditions \eqref{E:Biot-BCs-mixed}. More precisely, they are
obtained as  
\begin{equation}
\label{E:abstract-data-concrete}
\begin{alignedat}{1}
\ell_u(v) = \int_{0}^T \left( \left\langle f_u,\, v \right\rangle_{\bbU} + \left\langle g_u,\, v \right\rangle_{H^{1/2}(\Gamma_{u,N})}  \right)\dt \\
\ell_p(n) = \int_{0}^T \left( \left\langle f_p,\, n \right\rangle_{\bbP} + \left\langle g_p,\, n \right\rangle_{H^{1/2}(\Gamma_{p,N})}  \right)\dt
\end{alignedat}
\end{equation}
for all $v \in L^2(\bbU)$ and $n \in L^2(\bbP)$.

We search for a solution of the equations \eqref{E:BiotProblem-weak-formulation} in the \emph{trial space} $\overline{\bbY}_1$, with
\begin{equation}
\label{E:trial-space}
\bbY_1 := L^2(\bbU) \times L^2(\bbD) \times L^2(\bbP) \times \left( L^2(\overline{\bbP}) \cap H^1(\bbP^*) \right). 
\end{equation}
The closure is taken with respect to the \emph{trial norm}
\begin{equation}
\label{E:trial-norm}
\begin{split}
&\| (\widetilde u, \widetilde p_\Tot, \widetilde p, \widetilde m) \|_1^2 := \\
&\qquad \int_{0}^{T} \left( \Norm{\widetilde u}^2_\bbU + \dfrac{1}{\mu}\Norm{\widetilde p_\Tot}_\Domain^2 + T\Norm{\partial_t \widetilde m + \calL \widetilde p}^2_{\bbP^*} \right)\dt + \Norm{\widetilde m(0)}^2_{\bbP^*}\\
&\quad  + \int_{0}^T \left( \dfrac{1 }{\mu + \lambda} \Norm{\lambda \calD
    \widetilde u - \widetilde p_\Tot - \alpha \calP_\bbD \widetilde p}^2_\Domain + \gamma\Norm{\alpha \calP_{\overline \bbP} \calD \widetilde u + \sigma
    \widetilde p - \widetilde m}^2_\Domain  \right)\dt
\end{split}
\end{equation}
where
\begin{align}\label{E:gamma}
  \gamma =
  \begin{cases}
    \min\left\lbrace  \dfrac{\mu + \lambda}{\alpha^2}, \dfrac{1}{\sigma}\right\rbrace   & \text{if $\sigma > 0$ and $\overline{\bbP} \subseteq \bbD$}
    \\[8pt]
    \dfrac{\mu + \lambda}{\alpha^2} + \dfrac{1}{\sigma} & \text{if $\sigma > 0$ and $\overline{\bbP} \nsubseteq \bbD$}\\[8pt]
    \dfrac{\mu + \lambda}{\alpha^2} &\text{if $\sigma = 0$}
  \end{cases}.
\end{align} 
We verify in
Proposition~\ref{P:rough-trial-functions} below that taking the
closure in the definition of the trial space is indeed 
necessary. Hereafter, we use the superscript `$\sim$' to distinguish a general
trial function in $\overline{\bbY}_1$ from the solution of the Biot's
equations. 

\begin{remark}[Definition of $\gamma$]
\label{R:gamma}
The reason for distinguishing three different cases in the definition
of $\gamma$ is made clear in the proof of the inf-sup stability in
Lemma~\ref{L:inf-sup} below. The first two cases are actually
equivalent under the condition $\sigma \eqsim \frac{\alpha^2}{\mu +
  \lambda}$, which is justified in
\cite[Section~2.2]{Lee.Mardal.Winther:17}, arguing by physical
principles. The condition characterizing the second case is equivalent
to prescribing $\sigma > 0$, $\Gamma_{u,E} = \partial \Domain$ and
$\Gamma_{p,N} \neq \partial \Domain$. This is, in a sense, the most
critical case, because the space $\overline{\bbP}$ changes from
$L^2(\Domain)$ to $L^2_0(\Domain)$ when passing from $\sigma > 0$ to
$\sigma = 0$. The unboundedness of $\gamma$ in the limit $\sigma \to
0$ reflects the lack of uniform control on the mean value of the
pressure. 
\end{remark}

The corresponding \emph{test space}, i.e., the space of all functions used to
test the weak formulation, is just the pre-dual of the product of the spaces
on the rightmost column in \eqref{E:BiotProblem-weak-formulation},
namely 
\begin{equation*}
\label{E:test-space}
\bbY_2 := L^2(\bbU) \times L^2(\bbD) \times L^2(\overline{\bbP}) \times L^2(\bbP) \times \bbP.
\end{equation*}
We equip $\bbY_2$ with the \emph{test norm}
\begin{equation}
\label{E:test-norm}
\begin{split}
\Norm{(v, q_\Tot, q, n, n_0)}_2^2 &:= \int_0^T \Big ( \Norm{v}^2_\bbU +  T^{-1}\Norm{n}^2_{\bbP} \Big)\dt + \Norm{n_0}^2_\bbP\\
& + \int_{0}^T \Big(  (\mu + \lambda) \Norm{q_\Tot}_\Domain^2 + \gamma^{-1}  \Norm{q}^2_\Domain  \Big)\dt .
\end{split} 
\end{equation}
The dual of $\Norm{\cdot}_2$ is the norm $\Norm{\cdot}_{2,*}$ mentioned in \eqref{E:linear-equations-stability} in the introduction, i.e. our measure of the regularity of the data. It is obtained as
\begin{equation}
\label{E:test-norm-dual}
\begin{split}
\Norm{(\widetilde \ell_{u}, \widetilde \ell_{p_\Tot}, \widetilde \ell_m, \widetilde \ell_p, \widetilde \ell_0)}_{2,*}^2 &= \int_0^T \Big ( \Norm{\widetilde \ell_u}^2_{\bbU^*} +  T\Norm{\widetilde \ell_p}^2_{\bbP^*} \Big)\dt + \Norm{\widetilde \ell_0}^2_{\bbP^*}\\
& + \int_{0}^T \left(  \dfrac{1}{\mu + \lambda} \Norm{\widetilde \ell_{p_\Tot}}_\Domain^2 + \gamma  \Norm{\widetilde \ell_m}^2_\Domain  \right)\dt 
\end{split} 
\end{equation}
for $(\widetilde \ell_{u}, \widetilde \ell_{p_\Tot}, \widetilde \ell_m, \widetilde \ell_p, \widetilde \ell_0) \in \bbY_2^*$. In \eqref{E:BiotProblem-weak-formulation}, the data components $\widetilde \ell_{p_\Tot}$ and $\widetilde \ell_m$ of the second and third equation vanish, but they may be nonzero, e.g. in a discretization method.

\begin{remark}[Two-field formulation]
\label{R:two-field-formulation}
As mentioned before, we could equivalently consider the original two-field formulation \eqref{E:BiotProblem-equations} of the Biot's equations in place of \eqref{E:BiotProblem-equations-fourfield}. In this case, the trial space to be used for the abstract weak formulation is the closure of
\begin{equation*}
\label{E:trial-space-2field}
\Big\{(\widetilde u,\widetilde p) \in L^2(\bbU) \times L^2(\bbP) \mid (\alpha \calP_{\overline \bbP}\calD \widetilde u + \sigma \widetilde p) \in H^1(\bbP^*) \Big\} 
\end{equation*}
with respect to the trial norm
\begin{equation}
\label{E:trial-norm-2field}
\begin{split}
&\int_{0}^{T} \left( \Norm{\widetilde u}^2_\bbU + \dfrac{1}{\mu}\Norm{\lambda \calD \widetilde u - \alpha \calP_\bbD\widetilde p}_\Domain^2 + T\Norm{\partial_t (\alpha \calP_{\overline \bbP}\calD \widetilde u + \sigma  \widetilde p) + \calL \widetilde p}^2_{\bbP^*} \right)\dt \\ &\qquad  +  \Norm{(\alpha \calP_{\overline \bbP}\calD \widetilde u + \sigma  \widetilde p)_{|t=0}}^2_{\bbP^*}.
\end{split}
\end{equation}
The corresponding test space is
\begin{equation}
\label{E:test-space-2field}
L^2(\bbU) \times L^2(\bbP) \times \bbP
\end{equation}
equipped with the test norm
\begin{equation}
\label{E:test-norm-2field}
\int_{0}^T \left( \Norm{v}^2_\bbU  + T^{-1}\Norm{n}^2_{\bbP} \right)\dt + \Norm{n_0}^2_\bbP.
\end{equation}
We opted for the four-field formulation because it appears to simplify some arguments in the proof of Lemma~\ref{L:nondegeneracy} (nondegeneracy) in Section~\ref{SS:Nondegeneracy}.
\end{remark}

\begin{remark}[Norms]
\label{R:norms}
To justify the norms \eqref{E:trial-norm} and \eqref{E:test-norm} for the analysis of the four-field formulation \eqref{E:BiotProblem-weak-formulation}, it is convenient starting from the
two-field formulation discussed in
Remark~\ref{R:two-field-formulation}. The test norm
\eqref{E:test-norm-2field} is simply the product norm on the test
space \eqref{E:test-space-2field}. Prescribing it corresponds to prescribing the regularity of the
data. Then, the expression of the trial norm
\eqref{E:trial-norm-2field} is not at our disposal, because it is
determined (up to norm equivalence) by requiring that the trial space is isomorphic to the dual of the test space through the weak formulation \eqref{E:BiotProblem-weak-formulation}, cf. \eqref{E:linear-equations-stability}. When passing from the two- to the four-field formulation, two additional terms must be included in the definition of both \eqref{E:trial-norm} and \eqref{E:test-norm},
because inhomogeneous data $\ell_{p_\Tot}$ and $\ell_m$ are in principle allowed in the second and
third equations of \eqref{E:BiotProblem-weak-formulation}.  
Finally, let us mention that the various terms in \eqref{E:trial-norm} and \eqref{E:test-norm} are scaled so that all hidden constants hereafter are independent of the final time $T$ and of the material parameters. In particular, the scaling guarantees that the definition of each norm is consistent in terms of physical units for $\Dim = 3$.
\end{remark}

\begin{remark}[Regularity of $\ell_u$]
\label{R:regularity-lu}
Owing to the definition of the norm $\Norm{\cdot}_{2,*}$ in \eqref{E:test-norm-dual} we assume that the load in the first equation of the weak formulation \eqref{E:BiotProblem-weak-formulation} is such that $\ell_u \in L^2(\bbU^*)$, cf. Theorem~\ref{T:well-posedness} below. To our best knowledge, this relaxes the regularity assumption $\ell_u \in H^1(\bbU^*)$ made in all previous references, see e.g. \cite{Showalter:00,Zenisek:84}.
\end{remark}

\section{Well-posedness of the weak formulation}
\label{S:WellPosedness}

In this section we prove that the equations
\eqref{E:BiotProblem-weak-formulation} are well-posed by means of the
inf-sup theory. To this end, it is convenient rewriting the equations
in the following variational form: find $y_1 \in \overline{\bbY}_1$
such that

\begin{equation}
\label{E:BiotProblem-variational}
b(y_1, y_2) = \ell(y_2) \qquad \forall y_2 \in \bbY_2. 
\end{equation}
The bilinear form $b: \overline{\bbY}_1 \times \bbY_2 \to \R$ and the load $\ell: \bbY_2 \to \R$ are defined as 
\begin{equation}
\label{E:BiotProblem-abstract-form}
\begin{split}
b(\widetilde y_1, y_2&) = \int_0^T \Big( \left\langle \calE \widetilde u +
    \calD^* \widetilde p_\Tot,\, v \right\rangle_\bbU + \left\langle
    \partial_t \widetilde m + \calL \widetilde p,\, n \right\rangle_\bbP
\Big)\dt + \left\langle \widetilde m(0),\, n_0 \right\rangle_\bbP
\\
&+ \int_0^T \Big ( \left( \lambda \calD \widetilde u - \widetilde
    p_\Tot - \alpha \calP_\bbD\widetilde p,\, q_\Tot\right)_\Domain +
  \left( \alpha \calP_{\overline \bbP}\calD \widetilde u + \sigma \widetilde p - \widetilde m,\, q
  \right)_\Domain   \Big )\dt 
\end{split} 
\end{equation}
and
\begin{equation}
\label{E:BiotProblem-abstract-load}
\ell(y_2) = \ell_u(v) + \ell_p(n) + \left\langle \ell_0,\, n_0 \right\rangle_\bbP 
\end{equation}
for $\widetilde y_1 = (\widetilde u, \widetilde p_\Tot, \widetilde p,
\widetilde m) \in \overline{\bbY}_1$ and $y_2 = (v, q_\Tot, q, n, n_0)
\in \bbY_2 $. The projections $\calP_\bbD$ and $\calP_{\overline
  \bbP}$ could be neglected in the definition of $b$, because they are
tested with functions from their respective range. 

According to the so-called Banach-Ne\u{c}as-Babu\v{s}ka theorem, the
well-posedness of the problem \eqref{E:BiotProblem-variational}
is equivalent to the three properties
verified in the next lemmas.  

\begin{lemma}[Boundedness]
\label{L:boundedness} The bilinear form $b$ in
\eqref{E:BiotProblem-abstract-form} is such that 
\begin{equation}
\label{E:boundedness}
\sup_{y_2 \in \bbY_2} \dfrac{b(\widetilde y_1, y_2)}{\Norm{y_2}_2} \lesssim \| \widetilde y_1 \|_1
\end{equation} 
for all $\widetilde y_1 \in \overline{\bbY}_1$. The hidden constant
depends only on the domain $\Domain$. 
\end{lemma}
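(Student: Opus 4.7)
The plan is to bound each of the five summands in the definition \eqref{E:BiotProblem-abstract-form} of $b(\widetilde y_1, y_2)$ separately by the duality pairing matching the relevant spaces, and then recombine by a discrete Cauchy--Schwarz in the Euclidean norm on $\R^5$. The trial and test norms are designed precisely so that each product of factors splits in a way that matches the scaled contributions on the two sides.

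First I would treat the two duality integrals. For the first one, I would write
\[
\langle \calE\widetilde u + \calD^* \widetilde p_\Tot,\,v\rangle_\bbU \leq \bigl(\Norm{\calE \widetilde u}_{\bbU^*} + \Norm{\calD^* \widetilde p_\Tot}_{\bbU^*}\bigr)\Norm{v}_\bbU,
\]
and then use \eqref{E:coercivity-E-A-duality} to identify $\Norm{\calE\widetilde u}_{\bbU^*} = \Norm{\widetilde u}_\bbU$, together with the upper bound in \eqref{E:inf-sup-B} to get $\Norm{\calD^*\widetilde p_\Tot}_{\bbU^*} \lesssim \mu^{-1/2}\Norm{\widetilde p_\Tot}_\Domain$ with a constant depending only on $\Domain$. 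The second duality integral $\int_0^T \langle \partial_t \widetilde m + \calL\widetilde p,\,n\rangle_\bbP\,\dt$ is immediately bounded by Cauchy--Schwarz in $L^2$--time by the product of the corresponding $L^2(\bbP^*)$ and $L^2(\bbP)$ norms. The boundary term $\langle \widetilde m(0),\,n_0\rangle_\bbP$ is estimated analogously by $\Norm{\widetilde m(0)}_{\bbP^*}\Norm{n_0}_\bbP$.

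For the remaining two integrals I would insert the scalings dictated by the norms: using Cauchy--Schwarz in $L^2(\Domain)$ and then in $L^2$--time,
\[
\int_0^T \bigl(\lambda \calD \widetilde u - \widetilde p_\Tot - \alpha \calP_\bbD \widetilde p,\,q_\Tot\bigr)_\Domain \dt \leq \biggl(\int_0^T \frac{\Norm{\lambda \calD\widetilde u - \widetilde p_\Tot - \alpha \calP_\bbD\widetilde p}_\Domain^2}{\mu+\lambda}\dt\biggr)^{1/2}\biggl(\int_0^T (\mu+\lambda)\Norm{q_\Tot}_\Domain^2\dt\biggr)^{1/2},
\]
and similarly
\[
\int_0^T \bigl(\alpha \calP_{\overline\bbP}\calD \widetilde u + \sigma \widetilde p - \widetilde m,\,q\bigr)_\Domain \dt \leq \biggl(\int_0^T \gamma \Norm{\alpha \calP_{\overline\bbP}\calD\widetilde u + \sigma\widetilde p - \widetilde m}_\Domain^2\dt\biggr)^{1/2}\biggl(\int_0^T \gamma^{-1}\Norm{q}_\Domain^2\dt\biggr)^{1/2}.
\]
In each case, the first factor is a piece of $\Norm{\widetilde y_1}_1$ and the second a piece of $\Norm{y_2}_2$. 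A final discrete Cauchy--Schwarz over the five contributions combines all the estimates into $b(\widetilde y_1,y_2) \lesssim \Norm{\widetilde y_1}_1 \Norm{y_2}_2$, and dividing by $\Norm{y_2}_2$ and taking the supremum gives \eqref{E:boundedness}.

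A density argument extends the bound from $\bbY_1$ to its closure $\overline{\bbY}_1$, so it is enough to argue for smooth trial tuples throughout. The main (mild) point to be careful about is that every material-parameter--dependent factor from $\calE$, $\calD^*$ and the reaction--type contributions is absorbed by the precise scalings $\mu^{-1}$, $(\mu+\lambda)^{-1}$ and $\gamma$ present in \eqref{E:trial-norm} (respectively $\mu+\lambda$ and $\gamma^{-1}$ in \eqref{E:test-norm}); verifying this matching is what guarantees that the hidden constant depends only on $\Domain$, as asserted. No further structural estimates beyond \eqref{E:coercivity-E-A-duality} and \eqref{E:inf-sup-B} are needed.
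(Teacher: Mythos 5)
Your argument is correct and follows the same route the paper indicates: term-by-term Cauchy--Schwarz on each of the five summands, using the identities \eqref{E:coercivity-E-A}/\eqref{E:coercivity-E-A-duality} to rewrite $\Norm{\calE\widetilde u}_{\bbU^*}=\Norm{\widetilde u}_\bbU$ and the duality for $\calL$, and the upper bound in \eqref{E:inf-sup-B} for $\Norm{\calD^*\widetilde p_\Tot}_{\bbU^*}$, followed by a discrete Cauchy--Schwarz to recombine. The paper states exactly this in one sentence, so your proposal simply spells out the routine details.
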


\begin{proof}
The claimed bound readily follows from the Cauchy-Schwartz inequality,
the identities in \eqref{E:coercivity-E-A} and the second part of
\eqref{E:inf-sup-B}. 
\end{proof}

\begin{lemma}[Inf-sup stability]
\label{L:inf-sup}
The bilinear form $b$ in \eqref{E:BiotProblem-abstract-form} is such that
\begin{equation}
\label{E:inf-sup}
\sup_{y_2 \in \bbY_2} \dfrac{b(\widetilde y_1, y_2)}{\Norm{y_2}_2} \gtrsim \left( \| \widetilde y_1 \|_1^2 +
  \Norm{\widetilde m}^2_{L^\infty(\bbP^*)} + \int_{0}^T \left( \lambda
    \Norm{\calD \widetilde u}^2_\Domain + \frac{1}{\gamma} \Norm{\widetilde p
    }^2_\Domain\right) \dt \right)^{\frac{1}{2}}  
\end{equation} 
for all $\widetilde y_1  = (\widetilde u, \widetilde p_\Tot, \widetilde p, \widetilde m)
\in \overline{\bbY}_1$. The hidden constant depends only on the domain~$\Domain$. 
\end{lemma}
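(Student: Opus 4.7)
The plan is to exhibit, for each $\widetilde y_1 = (\widetilde u, \widetilde p_\Tot, \widetilde p, \widetilde m) \in \overline{\bbY}_1$, an explicit test $y_2^* \in \bbY_2$ such that $b(\widetilde y_1, y_2^*)$ dominates the quantity inside the parentheses on the right of \eqref{E:inf-sup} while $\Norm{y_2^*}_2$ is bounded, up to a factor of $(1+T)^{1/2}$, by its square root. Denote by $\widetilde r := \lambda \calD \widetilde u - \widetilde p_\Tot - \alpha \calP_\bbD \widetilde p$ and $\widetilde s := \alpha \calP_{\overline \bbP}\calD \widetilde u + \sigma \widetilde p - \widetilde m$ the two residuals appearing in the last line of \eqref{E:trial-norm}. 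I decompose $y_2^* = y_2^{\mathrm{I}} + \eta\, y_2^{\mathrm{II}}$ with a small constant $\eta>0$, where $y_2^{\mathrm{I}}$ is the canonical Riesz-type test isolating the dual-norm and residual pieces of $\| \widetilde y_1 \|_1^2$: $v = \calE^{-1}\calD^* \widetilde p_\Tot$, $q_\Tot = (\mu+\lambda)^{-1}\widetilde r$, $q = \gamma \widetilde s$, $n = \calL^{-1}(\partial_t \widetilde m + \calL \widetilde p)$, and $n_0 = \calL^{-1}\widetilde m(0)$. By \eqref{E:coercivity-E-A-duality} and \eqref{E:inf-sup-B} each of these contributions equals the corresponding term of $\| \widetilde y_1 \|_1^2$ up to a constant depending only on $\Domain$; the only extra term produced is $\int_0^T (\widetilde p_\Tot, \calD \widetilde u)_\Domain\,dt$ from the $v$-component, which will be absorbed below.

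For the natural-norm contributions I take $y_2^{\mathrm{II}} = (\widetilde u, 0, 0, \calL^{-1}\widetilde m, 0)$. Two algebraic identities drive the analysis. First, substituting the definitions of $\widetilde r$ and $\widetilde s$ and using $\bbP \subseteq \overline{\bbP}$,
\[
(\widetilde p_\Tot, \calD \widetilde u)_\Domain = \lambda \Norm{\calD \widetilde u}_\Domain^2 + \sigma \Norm{\widetilde p}_\Domain^2 - \langle \widetilde m, \widetilde p\rangle_\bbP - (\widetilde s, \widetilde p)_\Domain - (\widetilde r, \calD \widetilde u)_\Domain.
\]
Second, the choice $n = \calL^{-1}\widetilde m$ combined with $\langle \partial_t \widetilde m, \calL^{-1}\widetilde m\rangle_\bbP = \tfrac12 \tfrac{d}{dt}\Norm{\widetilde m}_{\bbP^*}^2$ (a consequence of \eqref{E:coercivity-E-A-duality}) and $\langle \calL \widetilde p, \calL^{-1} \widetilde m\rangle_\bbP = \langle \widetilde m, \widetilde p\rangle_\bbP$ yields
\[
\int_0^T \langle \partial_t \widetilde m + \calL \widetilde p, \calL^{-1}\widetilde m\rangle_\bbP\, dt = \tfrac12 \Norm{\widetilde m(T)}_{\bbP^*}^2 - \tfrac12 \Norm{\widetilde m(0)}_{\bbP^*}^2 + \int_0^T \langle \widetilde m, \widetilde p\rangle_\bbP\, dt.
\]
Adding the two contributions, the $\langle \widetilde m, \widetilde p\rangle$ terms cancel, so $b(\widetilde y_1, y_2^{\mathrm{II}})$ controls $\int_0^T (\Norm{\widetilde u}_\bbU^2 + \lambda \Norm{\calD \widetilde u}_\Domain^2 + \sigma \Norm{\widetilde p}_\Domain^2)\, dt + \tfrac12 \Norm{\widetilde m(T)}_{\bbP^*}^2$ modulo the residual cross-terms $(\widetilde r, \calD \widetilde u)$ and $(\widetilde s, \widetilde p)$ and a negative $\tfrac12 \Norm{\widetilde m(0)}_{\bbP^*}^2$ compensated by the $n_0$-contribution of $y_2^{\mathrm{I}}$.

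The residual cross-terms are absorbed by Young's inequality. For $(\widetilde r, \calD \widetilde u)$, Korn's inequality gives $(\mu+\lambda)\Norm{\calD \widetilde u}_\Domain^2 \lesssim \Norm{\widetilde u}_\bbU^2 + \lambda \Norm{\calD \widetilde u}_\Domain^2$, so the term is swallowed after $\eta$ is chosen small. For $(\widetilde s, \widetilde p)$ the three cases of \eqref{E:gamma} enter: in Cases~1 and~2 one has $\gamma^{-1}\lesssim \sigma$ directly, while in Case~3 ($\sigma=0$) one uses $\calP_\bbD \widetilde p = \widetilde p$ and the identity defining $\widetilde r$ to obtain $\alpha^2 \Norm{\widetilde p}_\Domain^2 \lesssim \lambda^2 \Norm{\calD \widetilde u}_\Domain^2 + \Norm{\widetilde p_\Tot}_\Domain^2 + \Norm{\widetilde r}_\Domain^2$, so that $\gamma^{-1}\Norm{\widetilde p}_\Domain^2 = \alpha^2 (\mu+\lambda)^{-1} \Norm{\widetilde p}_\Domain^2 \lesssim \lambda \Norm{\calD \widetilde u}_\Domain^2 + \mu^{-1}\Norm{\widetilde p_\Tot}_\Domain^2 + (\mu+\lambda)^{-1}\Norm{\widetilde r}_\Domain^2$ is absorbed by the already-controlled positive quantities. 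To upgrade the endpoint bound on $\widetilde m(T)$ to the $L^\infty$-in-time norm, I repeat the same energy argument on a truncated interval $[0,\tau^*]$ with $\tau^* \in [0,T]$ chosen to approximately realize the supremum of $\Norm{\widetilde m(\tau)}_{\bbP^*}$; this replaces $T$ by $\tau^*$ on the left of the identity above and yields $\tfrac12 \Norm{\widetilde m(\tau^*)}_{\bbP^*}^2$ as a lower bound. The factor $(1+T)^{-1/2}$ in \eqref{E:inf-sup} arises when bounding $\Norm{y_2^{\mathrm{II}}}_2^2$, which includes $\int_0^{\tau^*}\Norm{\calL^{-1}\widetilde m}_\bbP^2\, dt = \int_0^{\tau^*}\Norm{\widetilde m}_{\bbP^*}^2\, dt \leq T \Norm{\widetilde m}_{L^\infty(\bbP^*)}^2$, to be balanced against the positive lower bound on $b(\widetilde y_1, y_2^*)$. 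I anticipate that the main obstacle is precisely the robust absorption of $(\widetilde s, \widetilde p)$ in Case~3 of \eqref{E:gamma}: there $\sigma \Norm{\widetilde p}_\Domain^2$ is useless and one must carefully exploit the second equation and Korn's inequality to reconstitute sufficient control on $\Norm{\widetilde p}_\Domain^2$ via the other positive terms, all while keeping every constant independent of the material parameters $\mu$, $\lambda$, $\sigma$, $\alpha$, $\kappa$.
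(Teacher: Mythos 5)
Your decomposition $y_2^* = y_2^{\mathrm I} + \eta\, y_2^{\mathrm{II}}$ with a \emph{small} $\eta>0$ does not close, and the clause ``which will be absorbed below'' hides a genuine obstruction. The first component $v = \calE^{-1}\calD^*\widetilde p_\Tot$ of $y_2^{\mathrm I}$ produces $\int_0^T(\widetilde p_\Tot, \calD\widetilde u)_\Domain\,\dt$ with coefficient $1$, while the compensating positive terms $\|\widetilde u\|_\bbU^2$ and the $\langle\widetilde m, \widetilde p\rangle_\bbP$ contribution from $n$ both carry only coefficient~$\eta$. You then have two options and both fail. \emph{(a) Absorb by Young's inequality:} the bound $\mu\Norm{\calD\widetilde u}_\Domain^2\lesssim\Norm{\widetilde u}_\bbU^2$ only helps you if you can pay with $\eta\Norm{\widetilde u}_\bbU^2$, which forces $\eta\gtrsim1$ (equivalently, you would need a factor $\gtrsim 1/\eta$ in front of $\frac1\mu\Norm{\widetilde p_\Tot}_\Domain^2$, but you only control it with a fixed $\Domain$-constant from \eqref{E:inf-sup-B}). \emph{(b) Expand the cross term with your first algebraic identity:} then the coefficient of $\langle\widetilde m,\widetilde p\rangle_\bbP$ in $b(\widetilde y_1,y_2^{\mathrm I})+\eta\,b(\widetilde y_1,y_2^{\mathrm{II}})$ is $-(1+\eta)+\eta=-1\ne 0$, and this leftover is not controllable, since $\Norm{\widetilde p}_\bbP$ is not bounded by $\Norm{\widetilde y_1}_1$ (Proposition~\ref{P:rough-trial-functions}). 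The paper resolves this by matching coefficients: $v$ carries $\widetilde u$ and $\calE^{-1}\calD^*\widetilde p_\Tot$ with equal weight, $n$ carries $\calL^{-1}\widetilde m$ with weight $2=1+1$ so that the $\alpha(\calD\widetilde u,\widetilde p)_\Domega$-type cross terms cancel \emph{identically}, and the residual slots $q_\Tot,q$ are inflated by large $\Domain$-dependent constants ($4\max\{1,C\}$ and $4/\min\{1,c\}$) to swallow the Young remainders. That matched-coefficient structure is indispensable; a small $\eta$ cannot replace it.

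A second, smaller gap: your claim ``in Cases~1 and~2 one has $\gamma^{-1}\lesssim\sigma$ directly'' is false in Case~1. There $\gamma=\min\{(\mu+\lambda)/\alpha^2,\,1/\sigma\}$, so $\gamma^{-1}=\max\{\alpha^2/(\mu+\lambda),\,\sigma\}\ge\sigma$, and no uniform reverse bound is available when $\alpha^2/(\mu+\lambda)\gg\sigma$. In that sub-case you need $\overline\bbP\subseteq\bbD$ (so $\calP_\bbD\widetilde p=\widetilde p$) and the argument you used for Case~3; the paper handles Case~1 precisely by choosing, pointwise in the parameters, whichever of the two manipulations delivers the smaller~$\gamma$. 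Your remaining ingredients---the $\chi_s$-truncation to realize the $L^\infty(\bbP^*)$-norm of $\widetilde m$ as an endpoint value, the compensation of $-\tfrac12\Norm{\widetilde m(0)}_{\bbP^*}^2$ by the $n_0$-slot, the Case~3 estimate $\alpha^2\Norm{\widetilde p}_\Domain^2\lesssim\lambda^2\Norm{\calD\widetilde u}_\Domain^2+\Norm{\widetilde p_\Tot}_\Domain^2+\Norm{\widetilde r}_\Domain^2$, and the $(1+T)$-factor from $\int_0^T\Norm{\widetilde m}_{\bbP^*}^2\dt\le T\Norm{\widetilde m}_{L^\infty(\bbP^*)}^2$---all agree with the paper.
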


\begin{proof}
See Section~\ref{SS:InfSup}.
\end{proof}

The combination of the Lemmas~\ref{L:boundedness} and \ref{L:inf-sup}
points out that the trial norm $\Norm{\cdot}_1$ is actually equivalent
to the (stronger) norm on the right-hand side of \eqref{E:inf-sup}. We
deduce a first relevant consequence of this observation in the following remark. 

\begin{remark}[Continuity of the total fluid content]
\label{R:continuity-m}
The embedding $ H^1(\bbP^*) \subseteq C^0(\bbP^*)$ reveals that the linear operator
\begin{equation*}
\label{E:continuity-m}
\bbY_1 \ni \widetilde{y}_1 = (\widetilde u, \widetilde p_\Tot, \widetilde p, \widetilde m) \mapsto \widetilde{m} \in C^0(\bbP^*)
\end{equation*}
is well-defined. The combination of Lemmas~\ref{L:boundedness} and \ref{L:inf-sup} further implies that this operator is bounded, i.e.
\begin{equation}
\label{E:continuity-m-bound}
\Norm{\widetilde m}_{L^\infty(\bbP^*)} \lesssim \Norm{\widetilde y_1}_1.
\end{equation}
Therefore we can extend it from $\bbY_1$ to $\overline{\bbY}_1$ by density. This observation is important to guarantee that the initial condition in \eqref{E:BiotProblem-weak-formulation} is
meaningful.   
\end{remark}

\begin{lemma}[Nondegeneracy]
\label{L:nondegeneracy}
Let the bilinear form $b$ be as in \eqref{E:BiotProblem-abstract-form} and assume
\begin{equation}
\label{E:nondegeneracy}
b(\widetilde y_1, y_2) = 0
\end{equation}
for all $\widetilde y_1 \in \overline{\bbY}_1$ and for some $y_2 \in \bbY_2$. Then, we have $y_2 = 0$.
\end{lemma}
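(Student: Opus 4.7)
The plan is to deduce that each component of $y_2 = (v, q_\Tot, q, n, n_0)$ vanishes by testing the identity $b(\widetilde y_1, y_2) = 0$ against specialized functions $\widetilde y_1 \in \bbY_1$ (dense in $\overline{\bbY}_1$), and then closing the argument via an energy identity on a reduced backward-in-time evolution equation for $n$.

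First I would test with $\widetilde y_1$ supported in one component at a time. Varying $\widetilde p_\Tot \in L^2(\bbD)$ yields $q_\Tot = \calD v$. Varying $\widetilde u \in L^2(\bbU)$, and dropping $\calP_{\overline \bbP}$ since $q \in \overline{\bbP}$, produces the pointwise identity
\begin{equation*}
\calE v + \lambda \calD^*\calD v + \alpha \calD^* q = 0 \quad \text{in } \bbU^*, \text{ a.e. } t.
\end{equation*}
Varying $\widetilde p \in L^2(\bbP)$ similarly yields $\calL n - \alpha \calD v + \sigma q = 0$ in $\bbP^*$. The most subtle test uses $\widetilde m(t) = \phi(t)w$ with $w \in \overline{\bbP}$ and $\phi \in C^\infty_c(0,T)$: exploiting $\langle \phi' w, n\rangle_\bbP = \phi'(w, n)_\Domain$, one gets $\partial_t n = -q$ in $L^2(\overline{\bbP})$, so that $n \in L^2(\bbP) \cap H^1(\overline{\bbP}) \subset C([0,T];\overline{\bbP})$ by Lions--Magenes. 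Varying next $\phi \in H^1(0,T)$ with $\phi(0)$ and $\phi(T)$ independent and integrating by parts in time then extracts $n(0) = n_0$ and $n(T) = 0$ in $\overline{\bbP}$.

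Setting $M := \calE + \lambda \calD^*\calD$ (an isomorphism $\bbU \to \bbU^*$ by coercivity), the first pointwise identity inverts to $v = \alpha M^{-1}\calD^* \partial_t n$, whence $\alpha \calD v = A \partial_t n$, where $A := \alpha^2 \calD M^{-1}\calD^*$ is bounded, self-adjoint, and positive semidefinite on $L^2(\Domain)$ with $\ker A = \bbD^\perp$. Substituting into the second pointwise identity produces the reduced equation
\begin{equation*}
\calL n = (\sigma I + A)\partial_t n \quad \text{in } \bbP^*, \text{ a.e. } t,
\end{equation*}
together with the terminal condition $n(T) = 0$. Pairing pointwise with $n(t) \in \bbP$ and integrating on $(0,T)$, the self-adjointness of $\sigma I + A$ combined with the absolute continuity of $t \mapsto ((\sigma I + A) n(t), n(t))_\Domain$ (valid since $n \in H^1(\overline{\bbP}) \subset H^1(L^2(\Domain))$) yields
\begin{equation*}
\int_0^T \|n\|_\bbP^2 \, dt = -\tfrac{1}{2}\bigl((\sigma I + A) n_0, n_0\bigr)_\Domain \le 0,
\end{equation*}
forcing both $n \equiv 0$ and $\bigl((\sigma I + A) n_0, n_0\bigr)_\Domain = 0$. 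When $\sigma > 0$ this gives $n_0 = 0$ at once; the remaining components vanish via $q = -\partial_t n = 0$, $v = \alpha M^{-1}\calD^* \partial_t n = 0$, and $q_\Tot = \calD v = 0$.

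The main obstacle is the degenerate case $\sigma = 0$, in which the energy identity only yields $n_0 \in \ker A = \bbD^\perp$. Here I would split into subcases according to the parameter-dependent definitions in Section~\ref{SS:weak-formulation}: if $\Gamma_{u,E} \neq \partial \Domain$, then $\bbD = L^2(\Domain)$ and $\bbD^\perp = \{0\}$; if $\Gamma_{u,E} = \partial \Domain$, then $\bbD = L^2_0(\Domain)$ and $\bbD^\perp$ consists of constant functions, but the regime $\sigma = 0, \Gamma_{u,E} = \partial \Domain$ in the definition of $\bbP$ precisely enforces $\bbP \subseteq L^2_0(\Domain)$, so the only constant lying in $\bbP$ is $0$. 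In every case $n_0 = 0$, and the proof concludes as in the non-degenerate case.
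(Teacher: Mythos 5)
Your proposal is correct and in essence the same argument as the paper's: diagonal tests determine each component of $y_2$ in terms of the others, the constraints collapse to a backward evolution equation for $n$ with terminal condition $n(T)=0$, and an energy identity closes the argument (the paper tests that equation with $\partial_t n$ to arrive at $-\tfrac12\|n_0\|_\bbP^2 = \text{nonnegative}$, while you test with $n$; the conclusion is the same). The concluding case split on $\sigma$ is superfluous: once the energy identity forces $n\equiv 0$, the continuity $n\in C^0(\overline\bbP)$ together with $n(0)=n_0$ already yields $n_0=0$ in all cases, so there is no need to analyze $\ker A = \bbD^\perp$.
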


\begin{proof}
See Section~\ref{SS:Nondegeneracy}.
\end{proof}

The combination of the above lemmas implies our main result, stating
existence, uniqueness and two-sided stability of the solution of the equations
\eqref{E:BiotProblem-weak-formulation}, in the sense of \eqref{E:linear-equations-stability}. In particular, the latter property
ensures that the regularity established for the solution matches the regularity requirements for the data.  

\begin{theorem}[Well-posedness]
\label{T:well-posedness}
For all $(\ell_u, \ell_p, \ell_0) \in L^2(\bbU^*) \times L^2(\bbP^*) \times \bbP^*$, the equations \eqref{E:BiotProblem-weak-formulation} have a unique solution $y_1 = (u,p_\Tot, p, m) \in \overline{\bbY}_1$, which fulfills the two-sided stability bounds
\begin{equation}
\label{E:well-posedness}
\begin{split}
&\Norm{y_1}_1^2 + \Norm{m}^2_{L^\infty(\bbP^*)} + \int_{0}^T \left( \lambda \Norm{\calD u}^2_\Domain + \frac{1}{\gamma} \Norm{p}^2_\Domain \right)\dt\\ 
& \qquad \qquad \eqsim \int_{0}^T \left( \Norm{u}^2_\bbU + \dfrac{1}{\mu} \Norm{p_\Tot}^2_\Domain +  T\Norm{\partial_t m + \calL p}^2_{\bbP^*} \right)\dt + \Norm{\widetilde m(0)}^2_{\bbP^*}\\
& \qquad \qquad \eqsim \int_{0}^T \left( \Norm{\ell_u}^2_{\bbU^*} + T\Norm{\ell_p}^2_{\bbP^*} \right)\dt + \Norm{\ell_0}^2_{\bbP^*}.
\end{split}
\end{equation}
The hidden constants depend only on the domain $\Domain$ and we have $m \in C^0(\bbP^*)$.
\end{theorem}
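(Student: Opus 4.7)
The plan is to package the three preceding lemmas through the Banach--Ne\v{c}as theorem, which applies exactly when the ambient space $\overline{\bbY}_1$ is complete, the bilinear form $b:\overline{\bbY}_1\times\bbY_2\to\R$ is bounded, satisfies an inf-sup condition from $\overline{\bbY}_1$ into $\bbY_2^*$, and is nondegenerate on $\bbY_2$. Lemmas~\ref{L:boundedness}, \ref{L:inf-sup} and \ref{L:nondegeneracy} supply precisely these hypotheses. Consequently, for every $\ell \in \bbY_2^*$ there exists a unique $y_1 = (u, p_\Tot, p, m) \in \overline{\bbY}_1$ such that $b(y_1, y_2) = \ell(y_2)$ for all $y_2 \in \bbY_2$, which is the variational form \eqref{E:BiotProblem-variational} of \eqref{E:BiotProblem-weak-formulation}. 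For the particular load \eqref{E:BiotProblem-abstract-load}, the components associated with the (exactly satisfied) second and third equations of \eqref{E:BiotProblem-weak-formulation} vanish, so the definition \eqref{E:test-norm-dual} simplifies to $\|\ell\|_{2,*}^2 = \int_0^T (\|\ell_u\|_{\bbU^*}^2 + \|\ell_p\|_{\bbP^*}^2)\dt + \|\ell_0\|_{\bbP^*}^2$, which is precisely the right-hand side of \eqref{E:well-posedness}.

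For the two-sided stability \eqref{E:well-posedness} I would argue in two steps. First, for the upper bound on the solution norm ($\lesssim$ direction), apply Lemma~\ref{L:inf-sup} to $\widetilde y_1 = y_1$: since $b(y_1, y_2) = \ell(y_2)$, the supremum on the left of \eqref{E:inf-sup} equals $\|\ell\|_{2,*}$, and the right-hand side is exactly the quantity on the left of \eqref{E:well-posedness} (the terms $\|y_1\|_1^2$, $\|m\|_{L^\infty(\bbP^*)}^2$ and $\int_0^T(\lambda\|\calD u\|_\Omega^2 + \sigma\|p\|_\Omega^2)\dt$), up to the factor $(1+T)^{1/2}$ which contributes to the constant depending on $T$. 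Second, for the lower bound ($\gtrsim$ direction), apply boundedness (Lemma~\ref{L:boundedness}) to $y_1$:
\[
\|\ell\|_{2,*} = \sup_{y_2 \in \bbY_2} \frac{\ell(y_2)}{\|y_2\|_2} = \sup_{y_2 \in \bbY_2} \frac{b(y_1, y_2)}{\|y_2\|_2} \lesssim \|y_1\|_1,
\]
and since $\|y_1\|_1$ is bounded by (in fact a sub-expression of) the left-hand side of \eqref{E:well-posedness}, this direction follows.

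The final assertion $m \in C^0(\bbP^*)$ is exactly the content of Remark~\ref{R:continuity-m}, since $y_1 \in \overline{\bbY}_1$ and the extension by density yields $m \in C^0(\bbP^*)$; alternatively, the stability bound controls $\partial_t m + \calL p$ in $L^2(\bbP^*)$, and $p \in L^2(\bbP)$ makes $\calL p \in L^2(\bbP^*)$, so $\partial_t m \in L^2(\bbP^*)$ and the embedding $H^1(\bbP^*) \hookrightarrow C^0(\bbP^*)$ applies. There is no substantial obstacle at this stage: the whole content of the theorem is absorbed into Lemmas~\ref{L:boundedness}--\ref{L:nondegeneracy}, and the only delicate points are bookkeeping of the constant $(1+T)^{1/2}$ from \eqref{E:inf-sup} and the observation that for the true solution the auxiliary load components $\ell_{p_\Tot}$ and $\ell_m$ in \eqref{E:test-norm-dual} drop out, matching the simpler right-hand side of \eqref{E:well-posedness}.
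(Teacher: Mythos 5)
Your proposal is correct and follows essentially the same path as the paper's proof: combine Lemmas~\ref{L:boundedness}, \ref{L:inf-sup} and \ref{L:nondegeneracy} via the Banach--Ne\v{c}as theorem for existence/uniqueness, then obtain the two-sided estimate by evaluating boundedness and inf-sup stability at the actual solution (for which the second and third equations annihilate the corresponding terms in $\Norm{\cdot}_1$), and conclude $m\in C^0(\bbP^*)$ from Remark~\ref{R:continuity-m}. The only nitpick is your parenthetical ``in fact a sub-expression of'': $\Norm{y_1}_1^2$ contains $\Norm{m(0)}_{\bbP^*}^2$, which does not appear literally in the left-hand side of \eqref{E:well-posedness}, but is dominated by $\Norm{m}_{L^\infty(\bbP^*)}^2$ there, so the inequality you need still holds.
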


\begin{proof}
The existence and the uniqueness of the solution result from the
combination of the Banach-Ne\u{c}as-Babu\v{s}ka theorem
\cite[Theorem~25.9]{Ern.Guermond:21b} with the
Lemmas~\ref{L:boundedness}, \ref{L:inf-sup} and
\ref{L:nondegeneracy}. The same argument
\cite[Remark~25.12]{Ern.Guermond:21b} yields 
\begin{equation*}
\label{E:well-posedness-proof}
\Norm{(u,p_\Tot, p, m)}_1 \eqsim \Norm{(\ell_u, 0,0,\ell_p, \ell_0)}_{2,*}
\end{equation*}
where the hidden constants depend only on the constants in
\eqref{E:boundedness} and \eqref{E:inf-sup}. Then, the claimed
two-sided stability bound follows by the definitions
\eqref{E:trial-norm} and \eqref{E:test-norm-dual} of the norms
$\Norm{\cdot}_1$ and $\Norm{\cdot}_{2,*}$ and the equivalence of
$\Norm{\cdot}_1$ with the stronger norm on the right-hand side of
\eqref{E:inf-sup} and by recalling that $(u,p_\Tot, p, m)$ solves
\eqref{E:BiotProblem-weak-formulation}. Finally, the continuity in
time of the component $m$ of the solution is guaranteed by
Remark~\ref{R:continuity-m}.  
\end{proof}

Before examining the proof of Lemmas~\ref{L:inf-sup}-\ref{L:nondegeneracy}, it is worth comparing Theorem~\ref{T:well-posedness} with related results in the literature.

\begin{remark}[Comparison with \cite{Khan.Zanotti:22}]
	\label{R:comparison-Khan-Zanotti}
The stability bound in Theorem~\ref{T:well-posedness} is consistent
with the one established in \cite[Section~2]{Khan.Zanotti:22} for the
stationary equations obtained from \eqref{E:BiotProblem-equations} by
semi-discretization in time with the backward Euler scheme. Indeed, in
that context, the stability estimate involves the trial norm 
\begin{equation*}
\label{E:trial-norm-stationary}
\tau\left( \Norm{u}^2_\bbU + \lambda \Norm{\calD u}^2_\Domain + \dfrac{1}{\mu} \Norm{p_\Tot}^2_\Domain + \sigma \Norm{p}^2_\Domain + \tau \Norm{p}^2_\bbP \right) + \Norm{m}^2_{\bbP^*}
\end{equation*}
where $\tau$ denotes the time step. By interpreting the multiplication
by $\tau$ as a kind of time integration, we see that each term here
has a corresponding one in
\eqref{E:well-posedness}. The only exception is the $\bbP$-norm of
$p$, which is multiplied by an additional factor $\tau$. Hence, we
cannot expect a uniform control on the $L^2(\bbP)$-norm of $p$ in
terms of the left-hand side of \eqref{E:well-posedness} in the limit
$\tau \to 0$, i.e., when passing from the time-semidiscretization to
the original equations. In Proposition~\ref{P:rough-trial-functions}
below we verify our expectation by means of a counterexample. 
\end{remark}

\begin{remark}[Comparison with \cite{Zenisek:84}]
\label{R:comparison-Zenisek}
The technique introduced by \v{Z}en\'{\i}\v{s}ek
\cite{Zenisek:84} consists in applying the so-called Feado-Galerkin
scheme and in establishing a stability estimate that serves to infer
the existence and the uniqueness of the solution by testing the
equations in \eqref{E:BiotProblem-equations} with
$(\partial_t u, p)$. This ultimately provides a
one-sided stability bound involving the following norm of the solution 
\begin{equation*}
\label{E:trial-norm-Zenisek}
\Norm{u}^2_{L^\infty(\bbU)} + \lambda \Norm{\calD
  u}^2_{L^\infty(L^2(\Domain))} + \sigma
\Norm{p}^2_{L^\infty(L^2(\Domain))} + \int_{0}^{T} \Norm{p}^2_\bbP
\dt; 
\end{equation*}  
see \cite[Theorem~1]{Zenisek:84} (the estimate is only established in
the proof). The scaling with respect
to the parameters is the same as in \eqref{E:well-posedness}. Each
term is measured in a stronger norm because of the higher regularity
assumption $\ell_u \in H^1(\bbU^*)$, cf. Remark~\ref{R:regularity-lu}. Still, in contrast to \eqref{E:well-posedness}, the bound in \cite{Zenisek:84} (and similar ones) cannot be reversed, meaning that the regularity established for the solution does not  match the regularity requirement for the data. 
\end{remark}

\begin{remark}[Comparison with \cite{Li.Zikatanov:22}]
\label{R:comparison-Li-Zikatanov}
Li and Zikatanov \cite{Li.Zikatanov:22} assume even more regular data
than in \cite{Zenisek:84}, namely $\ell_u \in H^1(\bbU^*)$ and $\ell_p
\in L^2(L^2(\Domain))$, as well as smooth and compatible initial data,
cf. Remark~\ref{R:initial-condition}. Then, by improving on the
original technique of \v{Z}en\'{\i}\v{s}ek, they are able to establish
the regularity 
\begin{equation*}
\label{E:comparison-Li-Zikatanov}
u \in H^1(\bbU) \qquad\text{and}\qquad p \in H^1(\bbP) \cap L^2(\mathbb
L)
\end{equation*}
where $\mathbb{L}=\{ \widetilde p \in \bbP \mid \calL \widetilde p
\in\overline{\bbP} \}$. They establish also a corresponding stability
estimate, where the scaling with respect to the material parameters is
similar as in the above-mentioned results. Remarkably, such estimate
can be reversed, in the sense that it fulfills an equivalence like
\eqref{E:linear-equations-stability}. 
\end{remark}

We end the discussion on Theorem~\ref{T:well-posedness} with a remark illustrating the robustness in the incompressible regime.

\begin{remark}[Incompressible elastic materials]
\label{R:incompressibile-elasticity}
Owing to the robustness of the hidden constants in Theorem~\ref{T:well-posedness}, our framework applies also to exactly incompressible materials, i.e. for $\lambda = +\infty$. In this case, the equivalence \eqref{E:well-posedness} implies $\calD u = 0$ in $L^2(\bbD)$. This condition replaces the second equation in \eqref{E:BiotProblem-weak-formulation}, which is no longer needed, because the total pressure is entirely characterized by the first equation. Thus, the first two equations in \eqref{E:BiotProblem-weak-formulation} are reduced to the Stokes equations pointwise in time and are decoupled from the remaining ones, which correspond to a nonstandard mixed formulation of the heat equation with initial value in $\bbP^*$. Note, in particular, that the third equation becomes $m = \sigma p$ and it remains the only source of control over the $L^2(L^2(\Domain))$-norm of the pressure. This observation and the definition of $\gamma$ in \eqref{E:gamma} explain why Theorem~\ref{T:well-posedness} does not control the $L^2(L^2(\Domain))$-norm of the pressure in the limit $\gamma \to +\infty$.
\end{remark}

\subsection{Inf-sup stability}
\label{SS:InfSup}

This section is devoted to the proof of Lemma~\ref{L:inf-sup}. For
this purpose, let $\widetilde y_1 = (\widetilde u, \widetilde p_\Tot,
\widetilde p, \widetilde m) \in \bbY_1$. We shall construct a test
function $y_2 \in \bbY_2$ such that  
\begin{equation}
\label{E:inf-sup-proof-lower-bound}
b(\widetilde y_1, y_2) \gtrsim 
\| \widetilde y_1 \|_1^2 + \Norm{\widetilde m}^2_{L^\infty(\bbP^*)} +
\int_{0}^T \left( \lambda \Norm{\calD \widetilde u}^2_\Domain + \frac{1}{\gamma}
  \Norm{\widetilde p }^2_\Domain\right) \dt  
\end{equation}
as well as
\begin{equation}
\label{E:inf-sup-proof-upper-bound}
\Norm{y_2}^2_2 \lesssim \left( \| \widetilde y_1 \|_1^2 + \Norm{\widetilde m}^2_{L^\infty(\bbP^*)} +\lambda\int_0^T\Norm{\calD \widetilde u}^2_\Domain  \right).
\end{equation}
The combination of these inequalities with a density argument implies
\eqref{E:inf-sup}. Taking $\widetilde y_1$ in $\bbY_1$ (and not
directly in $\overline{\bbY}_1$) simplifies the derivation of
\eqref{E:inf-sup-proof-lower-bound} and, in particular, the lower
bound of the term $\mathfrak{I}_2$ below. 

Let $s \in [0, T]$ be a value to be specified later. We denote by
$\chi_s:[0,T] \to \R$ the indicator function on $[0, s]$. In other
words, $\chi_s(t)$ equals $1$ for $t\leq s$ and it vanishes for $t >
s$. We consider the test function $y_{2,s} = (v, q_\Tot, q, n, n_0) \in \bbY_2$ defined by 
\begin{equation}
\label{E:inf-sup-test-function-components}
\begin{alignedat}{2}
v &= \Big(\widetilde u + \calE^{-1} \calD^*\widetilde p_\Tot)\chi_s \quad && \in L^2(\bbU)
\\
q_\Tot &= \Big(\dfrac{3\max\{1,C\}}{\mu + \lambda}(\lambda \calD \widetilde u - \widetilde p_\Tot - \alpha
\calP_\bbD\widetilde p) - \frac{\alpha K}{\mu + \lambda} \calP_\bbD \widetilde p \Big)\chi_s \quad &&\in
L^2(\bbD)
\\
q &= \dfrac{3 \gamma}{K} (\alpha \calP_{\overline \bbP}\calD
 \widetilde u + \sigma \widetilde p - \widetilde m) \chi_s \quad && \in L^2(\overline \bbP)
\\
n &= \calL^{-1} (2 \widetilde m + s(\partial_t \widetilde m + \calL \widetilde p))\chi_s \quad && \in L^2(\bbP^*)
\\
n_0 &= 2\calL^{-1} \widetilde m(0) && \in \bbP^*
\end{alignedat}
\end{equation} 
with the constants $C$ from \eqref{E:inf-sup-B}, $\gamma$ from \eqref{E:gamma} and $K>0$ to be determined later.

\begin{remark}[Motivating the test function]
\label{R:test-function}
The definition \eqref{E:BiotProblem-abstract-form} of the form $b$
consists of five summands. Roughly speaking, the test function
$y_{2,s}$ is designed so as to obtain a positive contribution (i.e., a
squared norm) from each summand. The second and the third components
of $y_{2,s}$ are additionally scaled by `sufficiently large'
constants, so as to compensate negative contributions arising from the
analysis of the other terms. Furthermore, the third and fourth components of $y_{2,s}$ contain additional terms to control
the  variables $\widetilde p$ and $\widetilde m$. The choice of the fourth and the
fifth components is in line with the derivation of the inf-sup stability for scalar parabolic equations,
see e.g. the proof of \cite[Lemma~71.2]{Ern.Guermond:21c}.  
\end{remark}

The claimed regularity of the components of the test function entails that we indeed have $y_{2,s} \in
\bbY_2$. Such regularity can be verified by recalling the setting in
Section~\ref{SS:weak-formulation}. In particular, we mention that the
operator $\calE^{-1} \calD^*$ maps \(\bbD\) into $\bbU$ and that $\widetilde m(0) \in \bbP^*$ is interpreted by means of the embedding $H^1(\bbP^*) \subseteq C^0(\bbP^*)$. Finally, the multiplication of all components (except the last one) by $\chi_s$ is admissible, because the test space
$\bbY_2$ involves only $L^2$ regularity in time. 

In order to establish \eqref{E:inf-sup-proof-lower-bound} for a
suitable test function $y_2$, we investigate the action of the form
$b$ onto the pair $(\widetilde y_1, y_{2,s})$. By recalling the
definition \eqref{E:BiotProblem-abstract-form} of $b$, we infer 
\begin{equation}
\label{E:inf-sup-test-function-action}
\begin{alignedat}{4}
b(\widetilde y_1, y_{2,s}) &= \int_0^s \left\langle \calE \widetilde u
  + \calD^* \widetilde p_\Tot, \,\widetilde u + \calE^{-1} \calD^*
  \widetilde p_\Tot \right\rangle_\bbU \dt &  \qquad (=: \mathfrak
I_1)
\\  
& + 2\int_{0}^s \left\langle \partial_t \widetilde m + \calL
  \widetilde p,\, \calL^{-1} \widetilde m \right\rangle_\bbP \dt &
\qquad (=: \mathfrak I_2)
\\ 
&+ s\int_{0}^s \left\langle \partial_t \widetilde m + \calL \widetilde
  p,\, \calL^{-1} (\partial_t \widetilde m + \calL \widetilde p)
\right\rangle_\bbP \dt &\qquad (=: \mathfrak I_3)
\\ 
&+ 2\left\langle \widetilde m(0),\, \calL^{-1} \widetilde
  m(0)\right\rangle_{\bbP} & \qquad (=: \mathfrak I_4)
\\
&+ \dfrac{3\max\{1,C\}}{\mu +\lambda} \int_0^s\Norm{ \lambda \calD
  \widetilde u  - \widetilde p_\Tot - \alpha \calP_\bbD\widetilde
  p}_\Domain^2 \dt
\\
&- \dfrac{K\alpha}{\mu + \lambda} \int_0^s ( \lambda \calD \widetilde u - \widetilde p_\Tot - \alpha \calP_\bbD \widetilde p, \calP_\bbD \widetilde p  )_\Domain \dt & \qquad (=: \mathfrak I_5)\\
& + \dfrac{3 \gamma}{K} \int_0^s \Norm{\alpha \calP_{\overline \bbP}\calD \widetilde u + \sigma \widetilde p - \widetilde m}^2_\Domain \dt.
\end{alignedat}
\end{equation}
We further investigate the five terms $\mathfrak{I}_1, \dots,
\mathfrak{I}_5$ on the right-hand side. The first parts of \eqref{E:coercivity-E-A} and \eqref{E:coercivity-E-A-duality} imply
\begin{equation*}
\mathfrak{I}_1 = \int_{0}^s \Big( \Norm{\widetilde u}_{\bbU}^2 + 2
\left\langle \calD^*\widetilde p_\Tot, \,\widetilde u \right\rangle_\bbU
+ \Norm{\calD^* \widetilde p_\Tot}_{\bbU^*}^2 \Big)\dt. 
\end{equation*}
Regarding the second summand on the right-hand side, we have
\begin{multline*}
\left\langle \calD^* \widetilde p_\Tot,\, \widetilde u  \right\rangle_\bbU 
= 
( \calD \widetilde u, \,\widetilde p_\Tot - \lambda \calD \widetilde u +
  \alpha \calP_\bbD\widetilde p )_\Domain + \lambda \Norm{\calD
  \widetilde u}^2_\Domain - \alpha (  \calD \widetilde u, \calP_\bbD
  \widetilde p)_\Domain
\\
\geq \dfrac{3\lambda}{4} \Norm{\calD \widetilde u}^2_\Domain - \dfrac{1}{4} \Norm{\widetilde{u}}^2_\bbU -
\dfrac{\max\{1,C\}}{\mu + \lambda} \Norm{\lambda \calD \widetilde u - \widetilde p_\Tot -
  \alpha \calP_\bbD\widetilde p}^2_\Domain 
-\alpha ( \calD \widetilde u, \,\widetilde p )_\Domain  
\end{multline*}
according to Young's inequality and the bound $\mu\Norm{\calD \widetilde u}^2_\Domain\le C \Norm{\widetilde{u}}^2_\bbU$, which follows from the second bound in~\eqref{E:inf-sup-B} and the fact that the operator norms of $\calD$ and $\calD^*$ coincide. Notice that we could neglect the
projection $\calP_\bbD$ in the last summand, because of the inclusion
$\calD \widetilde u \in \bbD$. We insert this lower bound into the
previous identity. By invoking also the lower bound in \eqref{E:inf-sup-B}, we obtain 
\begin{equation*}
\begin{split}
\mathfrak{I}_1 \geq \int_0^s \Bigg ( &\dfrac{1}{2}\Norm{\widetilde u}^2_\bbU + \dfrac{3\lambda}{2}
\Norm{\calD \widetilde u}^2_\Domain + \dfrac{c}{\mu}\Norm{\widetilde
  p_\Tot}^2_\Domain\\   &- \dfrac{2\max \{1,C\}}{\mu + \lambda}\Norm{\lambda \calD \widetilde u -
  \widetilde p_\Tot - \alpha \calP_\bbD\widetilde p}^2_\Domain 
-2\alpha ( \calD \widetilde u,\, \widetilde p )_\Domain  \Bigg)\dt.
\end{split}
\end{equation*}
The next term to be considered is 
\begin{equation*}
\mathfrak{I}_2 = 2 \int_{0}^s \Big( \left\langle \partial_t \widetilde m,\,
  \calL^{-1} \widetilde m \right\rangle_\bbP + ( \widetilde m,\,
  \widetilde p )_\Domain  \Big)\dt.  
\end{equation*}
The second part of \eqref{E:coercivity-E-A-duality} and an integration by parts \cite[Lemma~64.40]{Ern.Guermond:21c} reveal
\begin{equation*}
\int_{0}^s \left\langle \partial_t \widetilde m, \,\calL^{-1} \widetilde m
\right\rangle_{\bbP^*} \dt = \dfrac{1}{2} \Norm{\widetilde
  m(s)}^2_{\bbP^*} - \dfrac{1}{2} \Norm{\widetilde m(0)}^2_{\bbP^*}.  
\end{equation*} 
We bound the other summand in $\mathfrak{I}_2$ from below using Young's inequality to obtain
\begin{equation*}
\begin{split}
( \widetilde m,\, \widetilde p)_\Domain 
&= 
( \widetilde m - \alpha \calP_{\overline \bbP} \calD \widetilde u - \sigma \widetilde p,\, \widetilde p )_\Domain + \alpha ( \calD
\widetilde u,\, \widetilde p )_\Domain + \sigma \Norm{\widetilde
	p}^2_\Domain 
\\
&\geq  \alpha ( \calD \widetilde u,\, \widetilde p )_\Domain + \left(\sigma - \frac{K}{4\gamma}\right) \Norm{\widetilde p}^2_\Domain - \dfrac{\gamma}{K} \Norm{\alpha \calP_{\overline \bbP}\calD \widetilde u + \sigma \widetilde p - \widetilde m}^2_\Domain.
\end{split}
\end{equation*}
Combining this estimate with the above identities, we infer 
\begin{equation*}
\begin{split}
\mathfrak{I}_2 
&\geq 
\Norm{\widetilde m(s)}^2_{\bbP^*} - \Norm{\widetilde m(0)}^2_{\bbP^*}\\ 
&+ \int_0^s \Big( 2\alpha ( \calD \widetilde u,\, \widetilde p )_\Domain + \Big(2\sigma - \frac{K}{2\gamma}\Big)\Norm{\widetilde p}^2_\Domain
- \dfrac{2\gamma}{K} \Norm{\alpha \calP_{\overline \bbP}\calD \widetilde u + \sigma \widetilde p - \widetilde m}^2_\Domain \Big)\dt.
\end{split} 
\end{equation*}

\begin{remark}[Critical terms]
\label{R:critical-terms}
It is worth noticing that the (non necessarily positive) term $2\alpha (\calD \widetilde u, \widetilde p )_\Domain$ in the lower bound of $\mathfrak{I}_2$ is compensated by the corresponding term $-2\alpha ( \calD \widetilde u, \widetilde p )_\Domain$ in the lower bound of $\mathfrak{I}_1$. A similar compensation, obtained by a different test function, underlines the proof of the stability estimate established by \v{Z}en{\'\i}{\v{s}}ek \cite{Zenisek:84} and later used by several other authors. 
\end{remark}

According to the second
identity in \eqref{E:coercivity-E-A-duality}, we rewrite the third and
the fourth terms as 
\begin{equation*}
\mathfrak{I}_3 = s\int_0^s \Norm{\partial_t \widetilde m + \calL \widetilde p}^2_{\bbP^*}\dt
\qquad \text{and} \qquad
\mathfrak{I}_4 = 2\Norm{\widetilde m(0)}^2_{\bbP^*}.
\end{equation*}
For the fifth term, we exploit Young's inequality once more to obtain
\begin{equation*}
   \mathfrak I_5 
   \geq 
   \int_0^s \left( -\lambda \Norm{\calD \widetilde u}^2_\Domain - \frac{c}{2\mu} \Norm{\widetilde p_\Tot}^2_\Domain + \frac{\alpha^2 K}{\mu + \lambda} \Big( 1 - \frac{K}{4} - \frac{K}{2c} \Big) \Norm{\calP_\bbD \widetilde p}^2_\Domain \right )\dt.
\end{equation*}
Finally, we notice that, upon choosing $K>0$ small enough (only depending on the constant $c$ in the first part of \eqref{E:inf-sup-B}),
\begin{equation*}
    \Big( 2\sigma - \frac{K}{2\gamma} \Big) \Norm{\widetilde p}^2_\Domain + \frac{\alpha^2 K}{\mu + \lambda} \Big( 1 - \frac{K}{4} - \frac{K}{2c} \Big) \Norm{\calP_\bbD \widetilde p}^2_\Domain \geq \frac{K}{4\gamma} \Norm{\widetilde p}^2_\Domain. 
\end{equation*}
This bound follows from the definition of $\gamma$ in \eqref{E:gamma}, upon noticing that we have $\calP_\bbD \widetilde p = \widetilde p$ whenever $\overline{\bbP} \subseteq \bbD$. This holds true, in particular, for $\sigma = 0$.

We insert the identities for $\mathfrak{I}_3$ and $\mathfrak{I}_4$,
the lower estimates for $\mathfrak{I}_1$, $\mathfrak{I}_2$ and $\mathfrak I_5$ and the latter bound above into
\eqref{E:inf-sup-test-function-action}. We obtain 
\begin{equation*}
\label{E:inf-sup-test-function-action-2}
\begin{split}
b(\widetilde y_1, y_{2,s}) &\geq \int_0^s \Bigg( \dfrac{1}{2}\Norm{\widetilde u}^2_\bbU +
\dfrac{\lambda}{2} \Norm{\calD \widetilde u}^2_\Domain +
\dfrac{c}{2\mu} \Norm{\widetilde p_\Tot}^2_\Domain + \frac{K}{4\gamma}
\Norm{\widetilde p}^2_\Domain +  s\Norm{\partial_t \widetilde m + \calL \widetilde
  p}^2_{\bbP^*}  \Bigg)\dt
\\ 
&\quad+ \Norm{\widetilde m(s)}^2_{\bbP^*} + \Norm{\widetilde m(0)}^2_{\bbP^*}
\\
&\quad+ \int_0^s \Big( \dfrac{1}{\mu + \lambda} \Norm{\lambda \calD \widetilde u -
  \widetilde p_\Tot - \alpha \calP_\bbD \widetilde p}^2_\Domain + \frac{\gamma}{K} \Norm{\alpha \calP_{\overline \bbP}\calD
  \widetilde u + \sigma \widetilde p - \widetilde m}^2_\Domain \Big)\dt. 
\end{split}
\end{equation*}
We conclude that the lower bound \eqref{E:inf-sup-proof-lower-bound}
holds true and the hidden constant depends only on $\Domain$ through $c$ and $C$ in \eqref{E:inf-sup-B}, provided
that we select the test function 
\begin{equation} 
\label{E:inf-sup-test-function}
y_2 = y_{2,T} + y_{2, \overline s} 
\end{equation}
where $\overline s \in [0, T]$ is chosen so that $\Norm{\widetilde
  m(\overline s)}_{\bbP^*} = \Norm{\widetilde
  m}_{L^\infty(\bbP^*)}$.

The last step of the proof consists in bounding the norm of the test
function $y_2$ in \eqref{E:inf-sup-test-function}, in order to verify
\eqref{E:inf-sup-proof-upper-bound}. To this end, we establish a
corresponding upper bound for the norm of any function $y_{2,s}$, that
is uniform with respect to the parameter $s \in [0,T]$. According to
the definition \eqref{E:test-norm} of the test norm, the identities \eqref{E:coercivity-E-A} and
\eqref{E:coercivity-E-A-duality} and the second part of
\eqref{E:inf-sup-B}, the components of $y_{2,s}$ in \eqref{E:inf-sup-test-function-components} are such that
\begin{equation*}
\begin{alignedat}{1}
\int_0^T\Norm{v}_\bbU^2 \dt &\lesssim \int_0^T \Big(\Norm{\widetilde u}^2_\bbU + \frac{1}{\mu}\Norm{\widetilde p_\Tot}_\Domain^2 \Big)\dt
\\
(\mu +\lambda)\int_0^T \Norm{q_\Tot}_\Domain^2 &\lesssim  \int_0^T \Big( \dfrac{1}{\mu + \lambda}\Norm{\lambda \calD \widetilde u - \widetilde p_\Tot - \alpha
\calP_\bbD\widetilde p}_\Domain^2 +\lambda \Norm{\calD \widetilde u}^2_\Domain + \frac{1}{\mu}\Norm{\widetilde p_\Tot}^2_\Domain\Big)\dt
\\
\frac{1}{\gamma}\int_0^T \Norm{q}_\Domain^2\dt &\lesssim \gamma \int_0^T \Norm{\alpha \calP_{\overline \bbP}\calD
 \widetilde u + \sigma \widetilde p - \widetilde m}_\Domain^2\dt\\
\frac{1}{T}\int_0^T \Norm{n}_\bbP^2 &\lesssim \frac{1}{T}\int_0^T  \Norm{\widetilde m}_{\bbP^*}^2 \dt + T\int_0^T\Norm{\partial_t \widetilde m + \calL \widetilde p}_{\bbP^*}^2\dt\\
\Norm{n_0}^2_\bbP &\lesssim \Norm{\widetilde m(0)}^2_{\bbP^*}.
\end{alignedat}
\end{equation*} 
Notice that the hidden constants depends only on $\Domain$ through $c$ and $C$ in \eqref{E:inf-sup-B}. Estimating
the $L^2(\bbP^*)$-norm of $\widetilde m$ in terms of the
$L^\infty(\bbP^*)$-norm yields the desired bound 
\begin{equation*}
\Norm{y_{2,s}}^2_2 \lesssim \left( \Norm{\widetilde y_1}^2_1 + \Norm{\widetilde m}^2_{L^\infty(\bbP^*)} + \lambda\int_0^T \Norm{\calD \widetilde u}^2_\Domain\right).
\end{equation*}
We conclude that \eqref{E:inf-sup-proof-upper-bound} holds true by combining this with \eqref{E:inf-sup-test-function}.

\subsection{Nondegeneracy}
\label{SS:Nondegeneracy}

This section is devoted to the proof of
Lemma~\ref{L:nondegeneracy}. To this end, let $y_2 = (v, q_\Tot, q, n,
n_0) \in \bbY_2$ be such that \eqref{E:nondegeneracy} holds true for
all $\widetilde y_1 \in \overline{\bbY}_1$. We aim at showing  
\begin{equation}
\label{E:nondegeneracy-goal}
y_2 = 0.
\end{equation}

First, we use trial functions in the form  $\widetilde y_1 = (0, \widetilde
p_\Tot, 0, 0)$ in \eqref{E:nondegeneracy}. We obtain
\begin{equation*}
\int_0^T ( \widetilde p_\Tot,\, \calD v - q_\Tot) _\Domain \dt = 0
\end{equation*} 
for all $\widetilde p_\Tot \in L^2(\bbD)$. Since both $\calD v$ and $q_\Tot$ are in $L^2(\bbD)$, we infer 
\begin{equation}
\label{E:nondegeneracy-identity0}
q_\Tot = \calD v \qquad \text{in $L^2(\bbD)$}.
\end{equation}

Second, we use trial functions in the form $\widetilde y_1 = (\widetilde u, 0, 0, 0)$ in \eqref{E:nondegeneracy}. We obtain 
\begin{equation*}
\int_{0}^T\left( \left\langle \calE \widetilde u,\, v \right\rangle_\bbU
  + \lambda ( \calD \widetilde u,\, q_\Tot)_\Domain + \alpha ( \calD
  \widetilde u,\, \calP_\bbD q)_\Domain  \right) \dt = 0 
\end{equation*}
for all $u \in L^2(\bbU)$. We rearrange terms and use \eqref{E:nondegeneracy-identity0}. It results 
\begin{equation*}
\int_{0}^T \left\langle \widetilde u,\, \calE v + \lambda \calD^* \calD v + \alpha \calD^* \calP_\bbD q\right\rangle_\bbU \dt = 0. 
\end{equation*}
The operator $\calQ: \bbU \to \bbU^*$ defined as $\calQ = \calE +
\lambda \calD^*\calD$ is the one involved in the displacement
formulation of the linear elasticity equations. In particular, it is
self-adjoint and invertible. Since both $\calQ v$ and $\calD^*
\calP_\bbD q$ are in $L^2(\bbU^*)$, we infer 
\begin{equation}
\label{E:nondegeneracy-identity1}
v = -\alpha \calQ^{-1} \calD^* \calP_\bbD q \qquad \text{in $L^2(\bbU)$}.
\end{equation}

Third, we use trial functions in the form $\widetilde y_1 = (0, 0, \widetilde
p, 0)$ in \eqref{E:nondegeneracy} to obtain 
\begin{equation*}
\int_{0}^T \left( 
-\alpha ( \widetilde p,\, \calP_{\overline \bbP} q_\Tot)_\Domain +
\sigma ( \widetilde p,\, q)_\Domain + \left\langle \calL \widetilde p,\, n
\right\rangle_{\bbP} \right) \dt = 0  
\end{equation*}for all
$\widetilde p \in L^2(\bbP)$. Rearranging terms and using
\eqref{E:nondegeneracy-identity0} and
\eqref{E:nondegeneracy-identity1} results in
\begin{equation*}
\int_{0}^T ( \widetilde p,\, \alpha^2 \calP_{\overline \bbP}\calD
\calQ^{-1} \calD^*\calP_\bbD q + \sigma q )_\Domain \dt = -\int_{0}^T
\left\langle\widetilde p,\, \calL n \right\rangle_\bbP \dt. 
\end{equation*} 
Recall that $\bbP \subseteq \overline{\bbP} \equiv \overline{\bbP}^*
\subseteq \bbP^*$ is a Hilbert triplet. Since both $\calP_{\overline
  \bbP}\calD \calQ^{-1} \calD^*\calP_\bbD q$ and $q$ are in
$L^2(\overline{\bbP})$, we infer the inclusion $\calL n \in
L^2(\overline{\bbP})$ with  
\begin{equation}
\label{E:nondegeneracy-identity2}
\calL n = -\alpha^2 \calP_{\overline \bbP}\calD \calQ^{-1} \calD^*\calP_\bbD q - \sigma q \qquad \text{in $L^2(\overline{\bbP})$}.
\end{equation}

Finally, we use trial functions in the form $\widetilde y_1 = (0, 0, 0, \widetilde m)$ in \eqref{E:nondegeneracy}. We obtain 
\begin{equation*}
\int_{0}^T\Big( -( \widetilde m,\, q)_\Domain + \left\langle \partial_t
  \widetilde m,\, n \right\rangle_\bbP \Big)\dt + \left\langle
  \widetilde m(0),\, n_0\right\rangle_\bbP = 0 
\end{equation*}  
for all $\widetilde m \in L^2(\overline{\bbP}) \cap
H^1(\bbP^*)$. Considering first $\widetilde m = \phi \widetilde w$
with $\phi \in C^\infty_0(0,T)$ and $\widetilde w \in \overline{\bbP}$
reveals
\begin{equation*}
\int_{0}^T \phi ( \widetilde w,\, q  )_\Domain \dt =  \int_{0}^T \partial_t \phi \left\langle \widetilde w,\, n\right\rangle_\bbP \dt.
\end{equation*} 
By \cite[Proposition~64.33]{Ern.Guermond:21c}, we infer $n \in L^2(\bbP) \cap H^1(\overline{\bbP})$ with 
\begin{equation}
\label{E:nondegeneracy-identity3}
\partial_t n = -q \qquad \text{in $L^2(\overline{\bbP})$}.
\end{equation}
Then, assuming $\phi \in C^\infty(0, T)$ with $\phi(0) = 1$ and
$\phi(T) = 0$ or, respectively, $\phi(1) = 0$ and $\phi(T) = 1$, it
follows that $n(0), n(T) \in \bbP$ with   
\begin{equation}
\label{E:nondegeneracy-identity4}
n(0) = n_0 \quad \text{and}\quad  n(T) = 0\qquad \text{in $\bbP$}.
\end{equation}

According to \eqref{E:nondegeneracy-identity2}, \eqref{E:nondegeneracy-identity3} and \eqref{E:nondegeneracy-identity4}, it holds that
\begin{equation*}
\begin{split}
-\dfrac{1}{2} \Norm{n_0}^2_\bbP &= \dfrac{1}{2} \Norm{n(T)}^2_\bbP -
\dfrac{1}{2} \Norm{n(0)}^2_\bbP = \int_{0}^T ( \partial_t n,\, \calL
n)_\Domain \dt
\\
&=\alpha^2 \int_{0}^T \langle  \calQ^{-1} \calD^* \calP_\bbD q,\, \calD^* \calP_\bbD q \rangle _\bbU \dt + \sigma \int_{0}^T \Norm{q}_\Domain^2 \dt. 
\end{split}
\end{equation*}
Therefore, we have $n_0 = 0$, $\calP_\bbD q = 0$ and $\sigma q = 0$ in
the respective spaces. This implies $q = 0$ because we have
$\overline{\bbP} \subseteq \bbD$ (hence $\calP_\bbD q = q$) for
$\sigma = 0$, cf. \eqref{E:abstract-setting-spaces-ptot-m}. Then, the
identities \eqref{E:nondegeneracy-identity1},
\eqref{E:nondegeneracy-identity2} and
\eqref{E:nondegeneracy-identity0} imply $v = 0$, $n = 0$ and $q_\Tot
=0$, respectively. This verifies \eqref{E:nondegeneracy-goal} and
completes the proof.

\section{Further stability estimates}
\label{S:Stability-Additional}

The definition of the trial norm $\Norm{\cdot}_1$ in
\eqref{E:trial-norm} is, in a sense, minimal, because it includes only
the terms that are necessary for an immediate proof of the boundedness
of the form $b$ in \eqref{E:BiotProblem-abstract-form},
cf. Lemma~\ref{L:boundedness}. Lemma~\ref{L:inf-sup} reveals that we
could equivalently augment $\Norm{\cdot}_1$ with other terms, namely
the $L^2(L^2(\Domain))$-norm of the solid dilation and of the pressure
as well as the $L^\infty(\bbP^*)$-norm of the total fluid
content. This section aims at determining if we can control other
relevant terms by the trial norm. In particular, we are interested in
understanding whether $\bbY_1$ is closed with respect to
$\Norm{\cdot}_1$. Indeed, for $\widetilde y_1 = (\widetilde u,
\widetilde p_\Tot, 
\widetilde p, \widetilde m) \in \overline{\bbY}_1$, we have   
\begin{equation*}
\Norm{\widetilde y_1}_{1}^2 \geq T\int_0^T \Norm{\partial_t \widetilde m +
	\calL \widetilde p}^2_{\bbP^*}\dt  
\end{equation*}
but it is not clear whether $\Norm{\widetilde y_1}_1^2$  controls $\int_0^T(\Norm{\partial_t \widetilde m}_{\bbP^*}
+ \Norm{\widetilde p}_{\bbP}^2)\dt$. Thus, the question
arises if $\overline{\bbY}_1$ is indeed larger than $\bbY_1$ and, if
yes, how much larger it is. We establish some
results, showing that the two spaces are actually different but the difference is somehow subtle.

\subsection{Positive results}
\label{SS:positive-results}
We first show that we can control the $L^2(\bbP)$-norm of
an approximation of the the third component
$\widetilde p$ of a trial function, namely the
$L^2$-orthogonal projection onto $\bbP$-valued polynomials of any
degree $r \in \N_0$. Unfortunately, the control is not uniform with respect to $r$. 

The space of $\bbP$-valued polynomials of degree $r$ is defined as  
\begin{equation*}
\label{E:polynomials}
\Poly_r(\bbP) := \Big\{  q \in L^2(\bbP) \mid q(t) =
  \sum_{j=0}^{r} w_j t^j \quad \text{with} \quad (w_j)_{j=0}^r
  \subseteq \bbP \Big\}. 
\end{equation*}
The $L^2(\bbP)$-orthogonal projection $\calP_r: L^2(\bbP) \to \Poly_r(\bbP)$ is obtained via the condition
\begin{equation}
\label{E:projection-onto-polynomials}
\int_0^T \left\langle \calL \calP_r \widetilde p,\,q \right\rangle_\bbP \dt
= \int_0^T \left\langle \calL \widetilde p,\, q\right\rangle_\bbP \dt 
\end{equation}
for $\widetilde p \in L^2(\bbP)$ and for all $q \in \Poly_r(\bbP)$. Indeed, recall
that $\left\langle \calL \cdot,\, \cdot \right\rangle_\bbP$ is the
scalar product inducing the norm $\Norm{\cdot}_\bbP$ on $\bbP$,
cf. \eqref{E:coercivity-E-A}.  

The next proposition shows that the mapping
\begin{equation*}
(\widetilde u, \widetilde p_\Tot, \widetilde p, \widetilde m) \mapsto \calP_r \widetilde p
\end{equation*}
defines a bounded linear operator on $\bbY_1$ with respect
to the norm $\Norm{\cdot}_1$. Hence, we can extend such operator to
the trial space $\overline{\bbY}_1$ by density. In other words, the
$L^2(\bbP)$-orthogonal projection of the fluid pressure $\widetilde p$ onto
$\Poly_r(\bbP)$ is well-defined and bounded on $\overline{\bbY}_1$.

\begin{proposition}[Polynomial-in-time projection of the pressure]
\label{P:projection-fluid-pressure}
Let $r \in \N_0$. For $\widetilde y_1 = (\widetilde u, \widetilde p_\Tot, \widetilde p, \widetilde m) \in
\bbY_1$, it holds that 
\begin{equation*}
\label{E:projection-fluid-pressure}
T\int_0^T \Norm{\calP_r \widetilde p}_{\bbP}^2 \dt  
\lesssim 
\Norm{\widetilde y_1}_1^2.
\end{equation*}
The hidden constant depends only on the domain $\Domain$ and the degree~$r$.
\end{proposition}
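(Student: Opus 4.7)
My plan is to exploit the defining orthogonality of $\calP_r$, rewrite $\calL \widetilde p$ using the fourth equation of the weak formulation \eqref{E:BiotProblem-weak-formulation}, integrate by parts in time, and absorb the resulting time derivative of $q := \calP_r \widetilde p$ back into $q$ itself by means of polynomial inverse estimates on $\Poly_r(\bbP)$.

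Setting $q := \calP_r \widetilde p \in \Poly_r(\bbP)$, the defining property \eqref{E:projection-onto-polynomials} together with \eqref{E:coercivity-E-A} yields
\begin{equation*}
\int_0^T \Norm{q}_\bbP^2 \, \mathrm{d}t = \int_0^T \langle \calL q,\, q\rangle_\bbP \, \mathrm{d}t = \int_0^T \langle \calL \widetilde p,\, q\rangle_\bbP \, \mathrm{d}t.
\end{equation*}
I would then split $\calL \widetilde p = (\partial_t \widetilde m + \calL \widetilde p) - \partial_t \widetilde m$. The first piece is in $L^2(\bbP^*)$ with norm directly controlled by $\Norm{\widetilde y_1}_1$, and Cauchy-Schwarz handles its contribution by $\Norm{\partial_t \widetilde m + \calL \widetilde p}_{L^2(\bbP^*)} \Norm{q}_{L^2(\bbP)}$. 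For the second piece, the assumption $\widetilde y_1 \in \bbY_1$ (rather than merely $\overline{\bbY}_1$) supplies $\widetilde m \in L^2(\overline{\bbP}) \cap H^1(\bbP^*)$, while $q \in C^\infty([0,T],\bbP)$, so the Hilbert-triplet integration by parts \cite[Lemma~64.40]{Ern.Guermond:21c} produces boundary terms $\langle \widetilde m(T), q(T)\rangle_\bbP - \langle \widetilde m(0), q(0)\rangle_\bbP$ and a remaining integral $\int_0^T \langle \widetilde m, \partial_t q\rangle_\bbP \, \mathrm{d}t$.

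To close the estimate, I would invoke polynomial inverse (Markov--Bernstein) inequalities on $\Poly_r(\bbP)$: there exist constants $c_1(r,T)$ and $c_2(r,T)$ such that $\Norm{q(0)}_\bbP + \Norm{q(T)}_\bbP \leq c_1(r,T) \Norm{q}_{L^2(\bbP)}$ and $\Norm{\partial_t q}_{L^2(\bbP)} \leq c_2(r,T) \Norm{q}_{L^2(\bbP)}$. Both follow by applying the scalar estimates coefficient-wise to the expansion $\Grad q(t) = \sum_{j=0}^r (\Grad w_j)\, t^j$, whereupon the $\sqrt{\kappa}$ factor in $\Norm{\cdot}_\bbP$ cancels and the inequalities stay robust. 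The boundary terms are then controlled via $\bbP^*$-$\bbP$ duality by $\Norm{\widetilde m(0)}_{\bbP^*}$ and $\Norm{\widetilde m}_{L^\infty(\bbP^*)}$, and the last integral by $\Norm{\widetilde m}_{L^2(\bbP^*)} \leq T^{1/2} \Norm{\widetilde m}_{L^\infty(\bbP^*)}$. All these quantities are controlled by $\Norm{\widetilde y_1}_1$ owing to the equivalence (via Lemmas~\ref{L:boundedness} and~\ref{L:inf-sup}) between $\Norm{\cdot}_1$ and the stronger norm on the right-hand side of \eqref{E:inf-sup}. A Young inequality then absorbs the factor $\Norm{q}_{L^2(\bbP)}$ on the right-hand side.

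The main obstacle I expect is robustness: the constant must depend only on $\Domain$, $T$, and $r$, with no hidden dependence on the material parameters. This is the reason for performing the polynomial inverse estimates at the level of $\Grad q$, so that $\sqrt{\kappa}$ factors out trivially, and for relying on the $L^\infty(\bbP^*)$-control of $\widetilde m$ furnished by inf-sup stability rather than on any $\bbP$-norm of $\widetilde p$ itself, which the discussion preceding Proposition~\ref{P:L2L2-pressure} shows is not uniformly available along $\overline{\bbY}_1$.
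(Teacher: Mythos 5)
Your proposal is correct and follows essentially the same strategy as the paper: both use the orthogonality defining $\calP_r$, integrate by parts against $\partial_t\widetilde m$, control the boundary and bulk terms via the $L^\infty(\bbP^*)$-bound on $\widetilde m$ supplied by inf-sup stability, and close with Young's inequality. The only cosmetic difference is that the paper justifies the inverse estimates on $\Poly_r(\bbP)$ by invoking completeness with respect to two norms and the open mapping theorem, whereas you spell out the Markov--Bernstein inequalities explicitly (which makes the $\kappa$-robustness slightly more transparent, but amounts to the same thing).
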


\begin{proof}
Recall the boundedness of the form $b$ stated in
Lemma~\ref{L:boundedness}. Using the test function $y_2 = (0, 0, 0,
\calP_r \widetilde p, 0) \in \bbY_2$ reveals 
\begin{equation}
\label{E:projection-fluid-pressure-proof}
\int_0^T \left\langle \partial_t \widetilde m + \calL \widetilde p, \calP_r
  \widetilde p \right\rangle_\bbP \dt \lesssim T^{-\frac{1}{2}}\Norm{\widetilde y_1}_1 
   \Norm{\calP_r \widetilde p}_{L^2(\bbP)} 
\end{equation}
where the hidden constant depends only on $\Domain$. The inclusion
$\Poly_r(\bbP) \subseteq H^1(\bbP)$, the second identity in
\eqref{E:coercivity-E-A} and \eqref{E:projection-onto-polynomials}
entail that we have 
\begin{equation*}
\begin{split}
\int_0^T \left\langle \partial_t \widetilde m + \calL \widetilde p, \calP_r \widetilde p \right\rangle_\bbP \dt
= &\Norm{\calP_r \widetilde p}_{L^2(\bbP)}^2  -\int_0^T \left\langle \widetilde m, \partial_t \calP_r \widetilde p \right\rangle_\bbP \dt\\ 
&+ \left\langle \widetilde m(T), \calP_r \widetilde p(T) \right\rangle_\bbP - \left\langle \widetilde m(0), \calP_r \widetilde p(0) \right\rangle_\bbP.
\end{split}  
\end{equation*}
Note that the space $\Poly_r(\bbP)$ is complete with respect to both the $L^\infty(\bbP)$- and the $L^2(\bbP)$-norm. Therefore, the open mapping theorem implies the inverse inequality $\Norm{\calP_r \widetilde p}_{L^\infty(\bbP)}\lesssim T^{-\frac12}\Norm{\calP_r \widetilde p}_{L^2(\bbP)}$ and thus
\begin{equation*}
\left\langle \widetilde m(T), \calP_r \widetilde p(T) \right\rangle_\bbP - \left\langle \widetilde m(0), \calP_r \widetilde p(0) \right\rangle_\bbP \lesssim
T^{-\frac{1}{2}}\Norm{\widetilde m}_{L^\infty(\bbP^*)} \Norm{\calP_r \widetilde p}_{L^2(\bbP)} \dt
\end{equation*}
with hidden constants depending on $\Poly_r(\bbP)$ itself, hence on $\Domain$ and $r$. A similar argument entails  the inverse inequality $\Norm{\partial_t\calP_r \widetilde p}_{L^2(\bbP)}\lesssim T^{-1} \Norm{\calP_r \widetilde p}_{L^2(\bbP)}$, and we obtain with a H\"older estimate that
\begin{align*}
\int_0^T \left\langle \widetilde m, \partial_t \calP_r \widetilde p \right\rangle_\bbP \dt 
  \le T^{\frac{1}{2}}\Norm{\widetilde m}_{L^\infty(\bbP^*)}
  \Norm{\partial_t\calP_r \widetilde p}_{L^2(\bbP)}
  \lesssim
T^{-\frac{1}{2}}\Norm{\widetilde m}_{L^\infty(\bbP^*)} \Norm{\calP_r \widetilde p}_{L^2(\bbP)}. 
\end{align*}
We combine these bounds and the previous identity with
\eqref{E:projection-fluid-pressure-proof} to arrive at
\begin{equation*}
T\int_0^T \Norm{\calP_r \widetilde p}^2_\bbP \dt
\lesssim
\Norm{\widetilde y_1}^2_1 + \Norm{\widetilde m}^2_{L^\infty(\bbP^*)}.
\end{equation*}
We conclude by recalling the estimate $\Norm{\widetilde m}_{L^\infty(\bbP^*)} \lesssim \Norm{\widetilde y_1}_1$ from \eqref{E:continuity-m-bound}, with the hidden constant depending only on $\Domain$.
\end{proof}

Second, the estimate \eqref{E:continuity-m-bound} controls also the $L^\infty(\bbP^*)$-norm of $\widetilde m$, that is the anti-derivative of $\partial_t \widetilde m$. This readily implies that we can control the $L^\infty(\bbP)$-norm of the anti-derivative of $\widetilde p$. 

\begin{proposition}[Anti-derivative of the pressure]
\label{P:contr-anti-deriv}
For
$\widetilde y_1 = (\widetilde u, \widetilde p_\Tot, \widetilde p, \widetilde m) \in
\bbY_1$, we have
\begin{equation*}
\label{E:antiderivative-fluid-pressure}
\min\{1,T\}\sup_{t\in [0,T]} \left \|\int_0^t \widetilde p\dt\right \|_{\bbP}^2 
\lesssim 
\Norm{\widetilde y_1}^2_1.
\end{equation*}
The hidden constant depends only on the domain $\Domain$.
\end{proposition}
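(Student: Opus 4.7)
The plan is to exploit the fact that $\calL: \bbP \to \bbP^*$ is an isometric isomorphism when $\bbP$ is endowed with the norm in \eqref{E:norms} and $\bbP^*$ with its dual norm. Indeed, the identities \eqref{E:coercivity-E-A} and \eqref{E:coercivity-E-A-duality} yield
$$\|w\|_\bbP = \|\calL w\|_{\bbP^*}\qquad\text{for all }w\in\bbP.$$
Therefore, in order to bound the anti-derivative $W(t):=\int_0^t\widetilde p\,ds$ in $\bbP$, it suffices to bound $\calL W(t)$ in $\bbP^*$. The point is that $\calL W(t)$ can be expressed in terms of quantities that are directly controlled by $\Norm{\widetilde y_1}_1$.

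The key step is the following manipulation. Since $\widetilde y_1 = (\widetilde u,\widetilde p_\Tot,\widetilde p,\widetilde m)\in\bbY_1$, we have $\widetilde m\in H^1(\bbP^*)$ and $\widetilde p\in L^2(\bbP)$, hence the fundamental theorem of calculus in $\bbP^*$ gives
$$\int_0^t \bigl(\partial_t\widetilde m + \calL\widetilde p\bigr)\,ds \;=\; \widetilde m(t) - \widetilde m(0) + \calL W(t)\qquad\text{in }\bbP^*.$$
Rearranging and taking the $\bbP^*$-norm produces
$$\|W(t)\|_\bbP = \|\calL W(t)\|_{\bbP^*} \le \left\|\int_0^t\bigl(\partial_t\widetilde m+\calL\widetilde p\bigr)\,ds\right\|_{\bbP^*} + \|\widetilde m(t)\|_{\bbP^*} + \|\widetilde m(0)\|_{\bbP^*}.$$

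The last two terms are each bounded by $\Norm{\widetilde m}_{L^\infty(\bbP^*)}$, which is controlled by $\Norm{\widetilde y_1}_1$ (up to a factor depending on $T$) thanks to \eqref{E:positive-results}. The first term is handled by a Cauchy--Schwarz (H\"older) estimate in time:
$$\left\|\int_0^t\bigl(\partial_t\widetilde m+\calL\widetilde p\bigr)\,ds\right\|_{\bbP^*} \le \sqrt{T}\left(\int_0^T\|\partial_t\widetilde m+\calL\widetilde p\|_{\bbP^*}^2\,ds\right)^{1/2} \le \sqrt{T}\,\Norm{\widetilde y_1}_1,$$
by the very definition \eqref{E:trial-norm} of the trial norm. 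Taking the supremum over $t\in[0,T]$ yields the claim, with hidden constant depending only on $\Omega$ and $T$.

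There is no real obstacle: all ingredients are already in place. The only subtle point is ensuring that the identification $\|w\|_\bbP=\|\calL w\|_{\bbP^*}$ is really an equality (not just an inequality), which follows from the fact that $\langle\calL\cdot,\cdot\rangle_\bbP$ is an inner product on $\bbP$ inducing $\Norm{\cdot}_\bbP$ and that $\calL$ is the associated Riesz isometry onto $\bbP^*$. With this in hand, the estimate is obtained by a single triangle inequality applied to the pointwise-in-$t$ identity above.
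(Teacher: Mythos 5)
Your proof is correct and follows essentially the same route as the paper: both proofs use the fundamental theorem of calculus for $\widetilde m\in H^1(\bbP^*)$ to rewrite $\calL\!\int_0^t\widetilde p$ in terms of $\int_0^t(\partial_t\widetilde m+\calL\widetilde p)$ and the point values $\widetilde m(t)$, $\widetilde m(0)$, then apply a triangle and a H\"older inequality in time and invoke \eqref{E:positive-results}. Your explicit use of the Riesz isometry $\|w\|_\bbP=\|\calL w\|_{\bbP^*}$ is simply making visible a step the paper absorbs into the $\lesssim$ of the first display.
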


\begin{proof}
  We observe first that \(\int_0^t\partial_t \widetilde m\dt=\widetilde
  m(t)-\widetilde m(0)\) on
  \(\bbP^*\). Therefore, by applying a triangle and a H\"older
  inequality and recalling the definition of $\Norm{\cdot}_1$, we obtain
  \begin{align*}
    \left \|\int_0^t \widetilde p \dt\right \|_{\bbP}^2 &\lesssim \left \|\int_0^t \Big (
    \partial_t \widetilde m+\calL\widetilde p\Big) \;\dt \right \|_{\bbP^*}^2 +
    \Norm{\widetilde m(t)}_{\bbP^*}^2+\Norm{\widetilde m(0)}_{\bbP^*}^2
    \\
    &\lesssim T^{-1}\Norm{\widetilde y_1}_1^2 +
    \Norm{\widetilde m}_{L^\infty(\bbP^*)}^2.
  \end{align*}
  The claimed estimate follows from \eqref{E:continuity-m-bound}.
\end{proof}

\subsection{Negative results}
\label{SS:negative -results}

Roughly speaking, the results in Section~\ref{SS:positive-results} state
that, if the space $\bbY_1$ differs from its closure
$\overline{\bbY}_1$, then the difference is somehow subtle. On the
other hand, the next result confirms that indeed the two spaces are
different. In view of Proposition~\ref{P:projection-fluid-pressure},
the proof builds upon the construction of a function in
$\overline{\bbY}_1$, so that the third and the fourth components are polynomials in time of arbitrarily high degree.  

\begin{proposition}[Existence of `rough' trial functions]
\label{P:rough-trial-functions}
It holds that
\begin{equation*}
\label{E:rough-trial-functions}
\sup_{\widetilde y_1 = (\widetilde u, \widetilde p_\Tot, \widetilde p, \widetilde m) \in \bbY_1} 
\dfrac{\displaystyle\int_0^T \Big( \Norm{\partial_t \widetilde m}^2_{\bbP^*} + \Norm{\widetilde p}^2_\bbP \Big)\dt}{\Norm{\widetilde y_1}_1^2} = +\infty.
\end{equation*} 
\end{proposition}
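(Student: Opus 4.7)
The strategy is to construct an explicit sequence $(\widetilde y_{1,n})_{n}\subset \bbY_1$ such that $\|\widetilde y_{1,n}\|_1$ stays bounded uniformly in $n$, while the numerator $\int_0^T(\|\partial_t\widetilde m_n\|_{\bbP^*}^2+\|\widetilde p_n\|_\bbP^2)\,dt$ diverges. The underlying principle is that $\Norm{\cdot}_1$ measures the parabolic residual $\partial_t \widetilde m+\calL \widetilde p$ only \emph{as a single expression} in $L^2(\bbP^*)$; a carefully tuned cancellation between high-frequency $\partial_t\widetilde m_n$ and $\calL\widetilde p_n$ therefore permits each of these quantities to be arbitrarily large.

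Let $(\phi_n,\nu_n)$ be the eigenpairs of $\calL:\bbP\to\bbP^*$, normalized by $\|\phi_n\|_\Omega=1$ with $\nu_n\to +\infty$; their existence follows from the Gelfand triple $\bbP\subset\overline{\bbP}\equiv\overline{\bbP}^*\subset\bbP^*$ and the compactness of the embedding $\bbP\hookrightarrow\overline{\bbP}$, and covers all three cases in \eqref{E:abstract-setting-spaces-p}. I set
\begin{equation*}
  \widetilde u_n\equiv 0,\quad \widetilde p_{\Tot,n}\equiv 0,\quad \widetilde p_n(x,t):=\phi_n(x)\sin(\nu_n t),\quad \widetilde m_n(x,t):=\phi_n(x)\bigl(\cos(\nu_n t)-1\bigr).
\end{equation*}
The first check is that $\widetilde y_{1,n}:=(\widetilde u_n,\widetilde p_{\Tot,n},\widetilde p_n,\widetilde m_n)\in\bbY_1$. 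The only nontrivial point is the inclusion $\widetilde m_n\in L^2(\overline{\bbP})\cap H^1(\bbP^*)$, which follows from $\|\phi_n\|_{\bbP^*}^2=1/\nu_n$ and a one-line computation giving $\int_0^T\|\partial_t\widetilde m_n\|_{\bbP^*}^2\,dt=\nu_n\int_0^T\sin^2(\nu_n t)\,dt<\infty$.

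Next, direct computation yields $\partial_t \widetilde m_n+\calL \widetilde p_n\equiv 0$ and $\widetilde m_n(0)=0$, so these contributions to $\|\widetilde y_{1,n}\|_1^2$ vanish. The surviving constraint terms are pointwise in time estimated by
\begin{equation*}
  \tfrac{1}{\mu+\lambda}\|\alpha\calP_\bbD\widetilde p_n\|_\Omega^2\le\tfrac{\alpha^2}{\mu+\lambda}\sin^2(\nu_n t),\qquad \gamma\|\sigma\widetilde p_n-\widetilde m_n\|_\Omega^2\le\gamma(\sigma+2)^2,
\end{equation*}
since $\|\phi_n\|_\Omega=1$ and $|\sigma\sin(\nu_n t)-\cos(\nu_n t)+1|\le\sigma+2$. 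Integrated on $(0,T)$, they give a bound on $\|\widetilde y_{1,n}\|_1^2$ by a constant depending only on $T$, $\Omega$ and the material parameters, but not on $n$.

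On the other hand, because $\phi_n$ is an $\calL$-eigenfunction,
\begin{equation*}
  \int_0^T\|\widetilde p_n\|_\bbP^2\,dt=\nu_n\int_0^T\sin^2(\nu_n t)\,dt\sim\tfrac{\nu_n T}{2}\to+\infty
\end{equation*}
(the same holds for $\int_0^T\|\partial_t\widetilde m_n\|_{\bbP^*}^2\,dt$). Hence the ratio in the proposition diverges along the sequence, giving the claimed supremum. The construction is the same in all cases; the main obstacle is not conceptual but a bit of bookkeeping, namely checking that the eigenbasis can be selected inside $\bbP$ in each of the three regimes of \eqref{E:abstract-setting-spaces-p} and that the constraint-term bounds are uniform across the three definitions of $\gamma$ in \eqref{E:gamma}, both of which are routine.
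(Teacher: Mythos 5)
Your proof is correct and rests on the same key mechanism as the paper's: pick $\calL$-eigenfunctions with divergent eigenvalues and choose a time-profile so that $\partial_t\widetilde m+\calL\widetilde p\equiv 0$ and $\widetilde m(0)=0$ exactly, leaving only the bounded constraint terms in $\Norm{\cdot}_1$. The only cosmetic difference is that you use sinusoids $\sin(\nu_n t)$, $\cos(\nu_n t)-1$ while the paper uses monomials $t^{r_k}$, $t^{r_k+1}$ with $r_k=\lceil\lambda_k\rceil$ (and thereby drives the denominator to zero rather than merely keeping it bounded), but the argument is the same.
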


\begin{proof}
Recall that $\bbP \subseteq \overline{\bbP} \equiv \overline{\bbP}^* \subseteq\bbP^*$ is a Hilbert triplet. Then, by the theory of self-adjoint coercive operators, we have, for the eigenvalues $(\lambda_k)_{k \geq 1} \subseteq (0, +\infty)$ of the operator $\calL$, that $\lambda_k \nearrow +\infty$ as $k \to +\infty$. Let $(w_k)_{k \geq 1} \subseteq \bbP$ be the associated eigenfunctions. Hence, we have $\calL w_k = \lambda_k w_k$ as well as 
\begin{equation}
\label{E:eigenfunctions-norm}
\Norm{w_k}_\Domain^2 = 1
\qquad \text{and} \qquad
\Norm{w_k}^2_\bbP = \lambda_k, \qquad k \geq 1.
\end{equation}

Denote by $r_k := \left \lceil \lambda_k \right \rceil$ the ceiling function
applied to $\lambda_k$, i.e., the smallest integer larger than or equal to $\lambda_k$. We define
$\widetilde p^{(k)} \in L^2(\bbP)$ and $\widetilde m^{(k)} \in 
L^2(\overline{\bbP}) \cap H^1(\bbP^*)$ by
\begin{equation*}
\widetilde p^{(k)}(t) := \dfrac{w_k }{T^{r_k}} t^{r_k} 
\qquad \text{and} \qquad
\widetilde m^{(k)}(t) := - \dfrac{\lambda_k w_k }{T^{r_k}(r_k+1)} t^{r_k + 1}. 
\end{equation*}
By construction, we have
\begin{equation}
\label{E:rough-trial-function-construction}
\partial_t \widetilde m^{(k)} + \calL \widetilde p^{(k)} = 0
\qquad \text{and} \qquad
\widetilde m^{(k)} (0) =  0.
\end{equation}
Moreover, elementary computations reveal
\begin{equation*}
\int_0^T \Norm{\widetilde{p}^{(k)}}^2_\Domain \dt
=
\dfrac{T}{2r_k +1}
\qquad \text{and} \qquad
\int_0^T \Norm{\widetilde m^{(k)}}^2_\Domain\dt = \dfrac{T^3 \lambda_k^2}{(r_k+1)^2(2r_k+3)}.
\end{equation*}
Thus, for $\widetilde y_1^{(k)} = (0, 0, \widetilde p^{(k)}, \widetilde m^{(k)}) \in \bbY_1$, the definition \eqref{E:trial-norm} of the trial norm implies
\begin{equation*}
\Norm{\widetilde y_1^{(k)}}_1^2 \to 0 \quad \text{as $k \to +\infty$}.
\end{equation*}
On the other hand, it holds that 
\begin{equation*}
\int_0^T \Norm{\partial_t \widetilde m^{(k)}}^2_{\bbP^*}
=
\int_0^T \Norm{\widetilde p^{(k)}}^2_\bbP
=
\dfrac{\lambda_k T}{2r_k +1} 
\to \dfrac{T}{2} \quad \text{as $k \to +\infty$}.
\end{equation*}
Comparing this limit with the previous one concludes the proof.
\end{proof}

\begin{remark}[`Rough' trial functions]
\label{R:rough-trial-function}
The elements of the sequence $(\widetilde y_1^{(k)})_{k\geq 1}$ in the
poof of Proposition~\ref{P:rough-trial-functions} are in
$\bbY_1$ but do not solve the weak formulation
\eqref{E:BiotProblem-weak-formulation} of the Biot's
equations. Indeed, they do not fulfill the constraints in the second
and in the third lines of
\eqref{E:BiotProblem-weak-formulation}. Still, we might modify the
first component of $\widetilde y_1$ by choosing it (more precisely,
its divergence) so as to fulfill the constraint in the third
line. Then, we might modify also the second component according to the
second line in \eqref{E:BiotProblem-weak-formulation}. This
observation reveals a remarkable difference between our analysis and
former ones: we do not control the $L^2(\bbP)$-norm of the pressure, due to the weaker regularity assumption on the load $\ell_u$ in \eqref{E:BiotProblem-weak-formulation}, cf. Remark~\ref{R:regularity-lu}.  
\end{remark}

\begin{remark}[Time regularity]
\label{R:comparison-Murad-Tomee-Loula}
Some results in the spirit of
Propositions~\ref{P:contr-anti-deriv}-\ref{P:rough-trial-functions}
are proved by Murad, Thom\'{e}e and Loula
\cite[Section~2]{Murad.Thomee.Loula:96} under the assumption $\ell_u \in
H^1(\bbU^*)$ in \eqref{E:BiotProblem-weak-formulation}. Indeed, a
bound on the $L^\infty(\bbP)$-norm of the anti-derivative of the
pressure is established and it is observed that the same bound does
not hold true for the pressure itself, because of a singularity at $ t
= 0$. Numerical evidence of the latter observation can be found also in \cite{Bociu.Guidoboni.Sacco.Webster:16}. As in Remark~\ref{R:comparison-Zenisek}, the higher
integrability in time, compared to our approach, follows from the
higher regularity of $\ell_u$, cf. Remark~\ref{R:regularity-lu}.
\end{remark}

We conclude this section by recalling that, in the analysis of scalar
parabolic equations, the $L^\infty$ control in time over the point values of
the solution is obtained by combining some $L^2$ control over the
solution itself and on its time derivative. Here, in contrast,
Proposition~\ref{P:rough-trial-functions} states that we do not have a
$L^2$ control over the time derivative of the total fluid
content. Hence, it is remarkable that we could nevertheless establish the embedding
of such variable into $C^0(\bbP^*)$, cf. Remark~\ref{R:continuity-m}.   

\section{Shift of the regularity in space}
\label{S:Regularity}

In addition to the well-posedness of the weak formulation
\eqref{E:BiotProblem-weak-formulation}, we are interested also in
shift theorems, i.e. the question, whether more regular data than in
Theorem~\ref{T:well-posedness} give rise to correspondingly more
regular solutions. In fact, the regularity of the solution is a
necessary ingredient to justify the error decay for the discretization we propose and analyze in \cite{Kreuzer.Zanotti:22+}. Still, it is known that the
regularity theory for the Biot's equations is subtle. For instance,
Murad, Thom\'{e}e and Loula \cite{Murad.Thomee.Loula:96} observed that
the regularity in time is limited at $t=0$ and also Showalter
\cite{Showalter:00} came to a similar conclusion. 

Due to the complexity of the subject, we do not attempt at establishing a comprehensive result here. We confine our discussion to a rather specific case, where no singularity occurs. More precisely, we investigate only the regularity in space under the set of assumptions detailed below. What is more relevant for us is that we can use the inf-sup theory once more to this end. This appears to be an innovative technique not only for the Biot's equations but also in the general framework of coupled problems. In fact, we introduce another variational formulation of the initial-boundary value problem \eqref{E:BiotProblem} for the Biot's equations. Compared to the weak formulation \eqref{E:BiotProblem-weak-formulation}, we prescribe additional regularity in space of the trial functions. Therefore, any solution of the new `strong' formulation solves also the weak one. We verify the well-posedness again by applying the Banach-Ne\u{c}as-Babu\v{s}ka theorem. In this way, we establish a two-sided estimate, in the vein of \eqref{E:well-posedness}, ensuring that the regularity guaranteed for the solution matches the regularity requirement for the data.

\begin{subequations}
	\label{E:regularity-assumptions}
Our first assumption concerns the domain $\Domain$. We require
\begin{equation}
\label{E:regularity-assumption-domain}
\Domain \subseteq \R^2 \,\text{ is a convex polygon}.
\end{equation}
Alternatively, we could work with $\partial \Domain$ smooth, but \eqref{E:regularity-assumption-domain} is more relevant for the discretization analyzed in \cite{Kreuzer.Zanotti:22+}. Second, we assume pure essential boundary conditions for the displacement and pure natural boundary conditions for the pressure
\begin{equation}
\label{E:regularity-assumption-BCs}
\Gamma_{u,E} = \partial \Domain = \Gamma_{p,N}.
\end{equation}
Third, we restrict ourselves to the usually more critical case for the Lam\'{e} constants 
\begin{equation}
\label{E:regularity-assumption-parameters}
\Lambda \mu \leq \lambda 
\end{equation}
for some constant $\Lambda$ that is as large as necessary for the arguments in sections~\ref{SS:InfSup-Strong} and \ref{SS:Nondegeneracy-strong}, i.e. $\mu \ll \lambda$. Note that the chosen setting implies $\overline{\bbP}=\bbD$ (see~\eqref{E:abstract-setting-spaces-ptot-m}) and thus $\gamma$ in~\eqref{E:gamma} remains bounded even for  $\sigma \searrow 0$.
\end{subequations} 

\begin{remark}[Justification of the assumptions]
\label{R:regularity-assumption-justification}
Linear elasticity is one of the building blocks in the Biot's equations. Therefore, we use arguments introduced by Brenner and Sung \cite[Section~2]{Brenner.Sung:92} for that problem. This motivates the assumptions \eqref{E:regularity-assumption-domain} and \eqref{E:regularity-assumption-parameters}, as well as the first part of \eqref{E:regularity-assumption-BCs}. Note that \cite{Brenner.Sung:92} covers also pure natural boundary conditions. Regarding the restriction to $d=2$, the critical result is~\cite[Lemma~2.1]{Brenner.Sung:92}, which we use in~\eqref{E:infsup-strong-rightinverse} below.
The second part of \eqref{E:regularity-assumption-BCs} implies that we can use the same space for the total pressure and the total fluid content (cf. Figure~\ref{F:abstract-spaces-diagram-strong}), and an important relation between the differential operators, see \eqref{E:abstract-operator-laplacian-gradient}. Our proof heavily exploits both properties.
\end{remark}

\subsection{Strong formulation in space}
\label{SS:strong-formulation}
First of all, we introduce dedicated symbols for the most frequently used spaces, in the vein of Section~\ref{SS:weak-formulation}. According to the assumptions \eqref{E:regularity-assumptions}, we use
\begin{equation*}
\label{E:regularity-space-displacement}
\bbE := H^2(\Domain)^2 \cap \bbU = H^2(\Domain)^2 \cap H^1_0(\Domain)^2
\end{equation*}
for the regularity in space of the displacement. For the total pressure and the total fluid content, we recall the space $\bbP$, which reads
\begin{equation*}
\label{E:regularity-space-auxiliary-variables}
\bbP = H^1(\Domain) \cap L^2_0(\Domain)
\end{equation*}
in this case. For the pressure, we recall the space $\mathbb{L}$ from Remark~\ref{R:comparison-Li-Zikatanov}, namely
\begin{equation*}
\label{E:regularity-space-pressure}
\bbL = \{ \widetilde p \in \bbP \mid \mathcal{L} \widetilde p \in \overline \bbP \}
= \{ \widetilde p \in \bbP \mid \mathcal{L} \widetilde p \in L^2_0(\Domain) \}
\end{equation*}
where the closure of $\bbP$ is taken with respect to the $L^2(\Domain)$-norm.

The interplay between these spaces and (the restriction of) the  differential operators in \eqref{E:abstract-operators-concrete} is summarized in Figure~\ref{F:abstract-spaces-diagram-strong}. Note the different structure compared to the weak formulation (Figure~\ref{F:abstract-spaces-diagram}) and that, in the left part of the diagram, $\bbP$ and $L^2(\Domain)^2$ are identified with subspaces of $\bbD^*$ and $\bbU^*$ via the $L^2(\Domain)$-scalar product. Upon this identification, we have
\begin{equation}
\label{E:abstract-operators-gradient}
\calD^* = -\Grad \quad \text{ in $\bbP$}
\end{equation}
as well as
\begin{equation}
\label{E:abstract-operator-laplacian-gradient}
\left\langle \calL \cdot, \,\cdot\right\rangle_\bbP =  \kappa (\calD^* \cdot,\, \calD^* \cdot)_\Domain \quad \text{ in $\bbP \times \bbP$}.
\end{equation}

\begin{figure}[ht]
	\[
	\xymatrixcolsep{4pc}
	\xymatrixrowsep{4pc}
	\xymatrix{
		\bbE 
		\ar[dr]^{\calD}
		\ar[d]^{\calE}
		&
		&
		\bbL
		\ar[dl]_{i}
		\ar[d]^{\calL}
		\\
		L^2(\Domain)^2
		&
		\bbP
		\ar[l]^{\calD^*}
		\ar[r]_{i}
		&
		\overline{\bbP}
	}
	\]
	\caption{\label{F:abstract-spaces-diagram-strong} Spaces and
          operators describing the regularity in space for the strong
          formulation \eqref{E:BiotProblem-strong-formulation} of the
          Biot's equations. Note that $i$ denotes the embedding
          operator.} 
\end{figure}
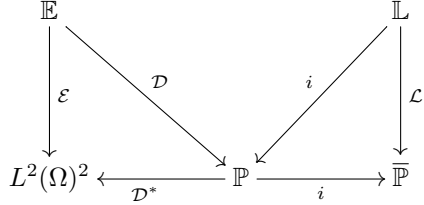

Following \cite[Section~2.1]{Boffi.Brezzi.Fortin:13}, we define
$H^{\frac{1}{2}}(\partial \Domain)$ as the image of $\bbP$ via the
trace operator on $\partial \Domain$. We equip this space with the
quotient norm 
\begin{equation}
\label{E:norm-1/2}
\| g \|_{H^{\frac{1}{2}}(\partial \Domain)}^2 :=
\dfrac{1}{\kappa}\inf_{\widetilde p \in \bbP, \;\widetilde
  p_{|\partial \Domain} = g} \Norm{\widetilde p}_\bbP^2 =
\inf_{\widetilde p \in \bbP, \;\widetilde p_{|\partial \Domain} = g}
\Norm{\calD^* \widetilde p}_\Domain^2. 
\end{equation}
Let $H^{-\frac{1}{2}}(\partial \Domain)$ be the dual of
$H^{\frac{1}{2}}(\partial \Domain)$. For simplicity, we denote by
$\left\langle \cdot, \,\cdot \right\rangle_{1/2} $ the corresponding
duality. By comparing \eqref{E:abstract-operators-gradient} with the
definitions \eqref{E:abstract-operators-concrete} and
\eqref{E:regularity-space-pressure} of $\calL$ and $\bbL$,
respectively, we infer that the normal derivative $\partial_\Normal:
\bbL \to H^{-\frac{1}{2}}(\partial \Domain)$ is well-defined and we
have 
\begin{equation}
\label{E:normal-derivative}
\left\langle \calL \cdot,\, \cdot\right\rangle_\bbP = (\calL  \cdot,\,
\cdot)_\Domain + \kappa \left\langle \partial_\Normal \cdot,\,
  \cdot\right\rangle_{1/2}  \quad \text{ in $\bbL \times \bbP$}. 
\end{equation}

After this preparation, we are in position to state the announced
strong formulation of \eqref{E:BiotProblem-equations-fourfield}, with
\eqref{E:BiotProblem-initialcondition} and \eqref{E:Biot-BCs-mixed},
as follows 
\begin{equation}
\label{E:BiotProblem-strong-formulation}
\begin{alignedat}{2}
\calE u + \calD^* p_\Tot &= f_u \qquad && \text{in $L^2(L^2(\Domain)^2)$}\\
\lambda \calD u - p_\Tot - \alpha p &= 0 \qquad && \text{in $L^2(\bbP)$}\\
\alpha \calD u + \sigma p - m &= 0 \qquad && \text{in $L^2(\bbP)$}\\
\partial_t m + \calL p &= f_p \qquad  && \text{in $L^2(\overline{\bbP})$}\\
\partial_\Normal p &= g_p \qquad && \text{in $L^2(H^{-\frac{1}{2}}(\partial \Domain))$}\\
m(0) &= \ell_0 \qquad && \text{in $\overline{\bbP}$}.
\end{alignedat}
\end{equation}

Note that, compared to the weak formulation
\eqref{E:BiotProblem-weak-formulation}, the data are assumed to be
more regular in space and the boundary condition for the pressure is
satisfied in the sense of traces. We look for a solution of
\eqref{E:BiotProblem-strong-formulation} in the trial space $\overline
\bbX_1$, with 
\begin{equation*}
\label{E:trial-space-strong}
\bbX_1 := L^2(\bbE) \times L^2(\bbP) \times L^2(\bbL) \times \left( L^2(\bbP) \cap H^1(\overline{\bbP}) \right)
\end{equation*}
where the closure is taken with respect to the norm
\begin{equation*}
\label{E:trial-norm-strong}
\begin{split}
\Normtr{(\widetilde u, \widetilde p_\Tot, \widetilde p, \widetilde m)}_1^2 := 
&\int_{0}^{T} \left( \mu \Norm{D^2\widetilde u}^2_\Domain +
  \dfrac{1}{\mu}\Norm{\calD^* \widetilde p_\Tot}_\Domain^2 +
  \dfrac{T}{\kappa}\Norm{\partial_t \widetilde m + \calL \widetilde
    p}^2_{\Domain} \right)\dt
\\
+ &\int_{0}^T \left( \dfrac{1 }{\lambda} \Norm{\calD^*(\lambda \calD
	\widetilde u - \widetilde p_\Tot - \alpha \widetilde p)}^2_\Domain + \gamma\Norm{\calD^*(\alpha \calD \widetilde u + \sigma
	\widetilde p - \widetilde m)}^2_\Domain  \right)\dt
    \\
+ & \int_0^T \dfrac{1}{\gamma}\Norm{\partial_\Normal \widetilde
  p}^2_{H^{-\frac{1}{2}}(\partial \Domain)}\dt +
\dfrac{1}{\kappa}\Norm{\widetilde m(0)}^2_{\Domain} . 
\end{split}
\end{equation*}
According to the additional regularity of the data, each component of
$\bbX_1$ is more regular in space compared to the corresponding one of the space $\bbY_1$
for the weak formulation~\eqref{E:trial-space}.  

The corresponding test space is 
\begin{equation*}
\label{E:test-space-strong}
\bbX_2 := L^2(L^2(\Domain)^2) \times L^2(\bbP) \times L^2(\bbP) \times L^2(\overline{\bbP}) \times L^2(H^{\frac{1}{2}}(\partial \Domain)) \times \overline{\bbP}
\end{equation*}
equipped with the norm
\begin{equation}
\label{E:test-norm-strong}
\begin{split}
\Normtr{(v, q_\Tot, q, n, n_\partial, n_0)}_2^2 &:= \int_0^T \Big ( \mu \Norm{v}^2_\Domain +  \frac{\kappa}{T} \Norm{n}^2_{\Domain} + \gamma \Norm{n_\partial}_{H^{\frac{1}{2}}(\partial \Domain)}^2 \Big)\dt + \kappa\Norm{n_0}^2_\Domain\\
& + \int_{0}^T \Big(  \lambda \Norm{\calD^*q_\Tot}_\Domain^2 + \frac{1}{\gamma} \Norm{\calD^*q}^2_\Domain  \Big)\dt .
\end{split} 
\end{equation}

\begin{remark}[Closure of $\bbX_1$]
\label{R:closure-X1}
Like the trial space $\bbY_1$ for the weak formulation, the space $\bbX_1$ is not closed with respect to the norm defined on it. This can be verified by exactly the same argument as in the proof of Proposition~\ref{P:rough-trial-functions}. Indeed, the eigenfunctions of $\mathcal{L}$ are actually in $\bbL$. Note that, in analogy with the observation in Remark~\ref{R:comparison-Zenisek}, the $L^2(\bbL)$-norm of the pressure can be controlled upon assuming that the load $f_u$ in the first equation of \eqref{E:BiotProblem-strong-formulation} is weakly differentiable in time, cf. Remark~\ref{R:comparison-Li-Zikatanov}. This approach is widely used in connection with mixed formulations of the Biot's equations, introducing the Darcy velocity as an independent variable, see e.g. \cite{Li.Zikatanov:22}.
\end{remark}

Before discussing further properties of \eqref{E:BiotProblem-strong-formulation}, it is worth noticing that this is indeed a stronger formulation of the Biot's equations than \eqref{E:BiotProblem-weak-formulation}.

\begin{lemma}[Strong vs weak formulation]
\label{L:strong-vs-weak}
Assume $x_1 \in \overline \bbX_1$ solves \eqref{E:BiotProblem-strong-formulation} with data 
\begin{equation}
\label{E:data-strong}
f_u \in L^2(L^2(\Domain)^2), \qquad 
f_p \in L^2(\overline{\bbP}), \qquad
g_p \in L^2(H^{-\frac{1}{2}}(\partial \Domain)), \qquad
\ell_0 \in \overline{\bbP}.
\end{equation}
Then $x_1$ solves also \eqref{E:BiotProblem-weak-formulation} with the data defined by \eqref{E:abstract-data-concrete}.
\end{lemma}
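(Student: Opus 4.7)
My plan is to check each of the five lines of the weak formulation~\eqref{E:BiotProblem-weak-formulation} in turn, using two reductions that are specific to the setting~\eqref{E:regularity-assumptions}. The first reduction is that $\Gamma_{u,E}=\partial\Domain$ forces $\bbU = H^1_0(\Domain)^2$ and $\bbD = L^2_0(\Domain)$, while $\Gamma_{p,N}=\partial\Domain$ forces $\bbP = H^1(\Domain)\cap L^2_0(\Domain)$ and $\overline{\bbP}=L^2_0(\Domain)$. Hence $\bbP\subseteq\bbD=\overline{\bbP}$, which collapses the projections: $\calP_\bbD p = p$ for $p\in\bbP$ and $\calP_{\overline{\bbP}}\calD u = \calD u$ for $u\in\bbU$. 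So the second and third equations of~\eqref{E:BiotProblem-strong-formulation}, which hold in $L^2(\bbP)\subseteq L^2(\bbD)\cap L^2(\overline{\bbP})$, immediately yield the corresponding weak equations.

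The first equation is equally direct: because $\bbU = H^1_0(\Domain)^2$, integrating by parts against any $v\in\bbU$ eliminates boundary contributions and shows that the embedding $L^2(\Domain)^2\hookrightarrow\bbU^*$ via the $L^2$-pairing is compatible with the weak actions of $\calE$ and $\calD^*$. Since $\Gamma_{u,N}=\emptyset$, the load $\ell_u$ from~\eqref{E:abstract-data-concrete} reduces to $v\mapsto\int_0^T\langle f_u,v\rangle_\bbU\,\dt$, giving the first weak equation in $L^2(\bbU^*)$. The initial condition passes likewise through the embedding $\overline{\bbP}\hookrightarrow\bbP^*$.

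The real work will be on the fourth equation, because $\calL p$ appears with two different meanings: a distributional Laplacian valued in $\overline{\bbP}$ in the strong form, versus the duality operator $\bbP\to\bbP^*$ in the weak form. The bridge is the integration-by-parts identity~\eqref{E:normal-derivative}: for $p\in\bbL$ and $n\in\bbP$,
\begin{equation*}
\langle\calL p,\,n\rangle_\bbP = (\calL p,\,n)_\Domain + \kappa\langle\partial_\Normal p,\,n\rangle_{1/2}.
\end{equation*}
Pairing the strong equation $\partial_t m + \calL p = f_p$ with $n\in\bbP$ in $\overline{\bbP}$, substituting this identity, and using the boundary condition $\partial_\Normal p = g_p$ together with $\partial_t m\in L^2(\overline{\bbP})\hookrightarrow L^2(\bbP^*)$ produces $\langle\partial_t m+\calL p,\,n\rangle_\bbP = \langle f_p,\,n\rangle_\bbP + \langle g_p,\,n\rangle_{1/2}$ pointwise in time, which (absorbing the factor $\kappa$ into the convention for $g_p$) is precisely $\ell_p(n)$ as in~\eqref{E:abstract-data-concrete}.

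Finally, I would extend from $\bbX_1$ to $\overline{\bbX}_1$ by a density argument, since all the operations performed above are continuous in the norm $\Normtr{\cdot}_1$. The only genuine subtlety is the bookkeeping in the fourth equation between the two avatars of $\calL$ and the boundary contribution; I would organize the proof so that this step is handled once and cleanly, rather than inline.
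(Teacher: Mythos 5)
Your proposal matches the paper's proof in every essential respect: the collapse of the spaces and projections under~\eqref{E:regularity-assumptions}, the embedding $L^2(\Domain)^2 \hookrightarrow \bbU^*$ for the first equation, and the integration-by-parts identity~\eqref{E:normal-derivative} as the bridge between the two meanings of $\calL$ in the fourth equation. The only cosmetic divergence is that you flag the $\kappa$-factor in $\partial_\Normal p$ versus $g_p$ and add an explicit density step; the paper works directly with $x_1 \in \overline{\bbX}_1$ (so no density argument is needed) and glosses the $\kappa$ as a notational convention.
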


\begin{proof}
The inclusion $\bbX_1 \subseteq \bbY_1$ and the bound $\Norm{\cdot}_1
\lesssim \Normtr{\cdot}_1$ imply $\overline{\bbX}_1 \subseteq
\overline \bbY_1$. Thus, if $x_1 = (u, p_\Tot, p, m) \in \overline
\bbX_1$ solves \eqref{E:BiotProblem-strong-formulation}, it is an
admissible trial function also for the weak formulation. The
assumptions \eqref{E:regularity-assumptions} entail that, in this
case, we have $\bbD = L^2_0(\Domain) = \overline \bbP$,
cf. \eqref{E:abstract-setting-spaces-ptot-m}. Then $x_1$ fulfills the
second, third and fifth equations in
\eqref{E:BiotProblem-weak-formulation}. The first equation is
fulfilled as well because of the inclusion $L^2(\Domain)^2 \subseteq
\bbU^*$ and the identity  
\begin{equation*}
\int_0^T(\calE u + \calD^* p_\Tot,\, v)_\Domain\dt = \int_0^T\left\langle \calE u + \calD^*p_\Tot,\, v\right\rangle_\bbU \dt
\end{equation*}
where $v \in L^2(\bbU)$ is arbitrary. Finally, for the fourth equation, we notice that \eqref{E:normal-derivative} implies
\begin{equation*}
\int_0^T \Big((\partial_t m + \calL p,\, n)_\Domain+ \kappa \left\langle \partial_\Normal p,\, n\right\rangle_{1/2} \Big)  \dt
=
\int_0^T \left\langle \partial_t m + \calL p,\, n\right\rangle_\bbP \dt  
\end{equation*}
for all $n \in L^2(\bbP)$.
\end{proof}

\subsection{Well-posedness and regularity}
\label{SS:regularity-results}

As mentioned before, the assumptions in \eqref{E:regularity-assumptions} prevent from any unexpected singularity of the solution of \eqref{E:BiotProblem-strong-formulation}. Therefore, the well-posedness can be verified by mimicking the argument in Section~\ref{S:WellPosedness}. Also in this case, we postpone most of the details of the proof to the next sections.

\begin{theorem}[Well-posedness]
\label{T:well-posedness-strong}
For $(f_u, f_p, g_p, \ell_0)$ as in \eqref{E:data-strong}, the equations \eqref{E:BiotProblem-strong-formulation} have a unique solution $x_1 = (u,p_\Tot, p, m) \in \overline{\bbX}_1$, which fulfills the two-sided stability bound
\begin{equation}
\label{E:well-posedness-strong}
\begin{split}
\Normtr{x_1}_1^2+&\int_0^T \Big( \lambda \Norm{\calD^*\calD u}^2_\Domain + \dfrac{1}{\gamma} \Norm{\calD^*p}^2_\Domain  \Big)  +  \dfrac{1}{\kappa}\Norm{m}^2_{L^\infty(\overline\bbP)}\\ 
& \eqsim \int_{0}^T \left( \dfrac{1}{\mu}\Norm{f_u}^2_{\Domain} + \dfrac{T}{\kappa}\Norm{f_p}^2_{\Domain} + \dfrac{1}{\gamma}\Norm{g_p}^2_{H^{-\frac{1}{2}}(\partial \Domain)} \right)\dt + \dfrac{1}{\kappa}\Norm{\ell_0}^2_{\Domain}.
\end{split}
\end{equation}
The hidden constants depend only on the domain $\Domain$ . Moreover, we have $m \in C^0(\overline \bbP)$.
\end{theorem}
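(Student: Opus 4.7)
The plan is to follow the template of the proof of Theorem~\ref{T:well-posedness} line by line, with the spaces $\bbY_i$ replaced by $\bbX_i$ and the corresponding norms. First I would rewrite~\eqref{E:BiotProblem-strong-formulation} as a variational problem: find $x_1 \in \overline{\bbX}_1$ with $b_s(x_1,x_2) = \ell_s(x_2)$ for every $x_2 \in \bbX_2$, where $b_s$ is the obvious analogue of~\eqref{E:BiotProblem-abstract-form} obtained by replacing the duality pairings $\langle \cdot,\cdot\rangle_\bbU$ and $\langle \cdot,\cdot\rangle_\bbP$ with the $L^2(\Domain)$-scalar product and adding a new boundary pairing $\kappa \int_0^T \langle \partial_\Normal \widetilde p, n_\partial\rangle_{1/2}\,dt$ coming from~\eqref{E:normal-derivative}, while $\ell_s$ encodes the data \(f_u,f_p,g_p,\ell_0\). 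Existence, uniqueness and the two-sided estimate $\Normtr{x_1}_1 \eqsim \Normtr{(f_u,0,0,f_p,g_p,\ell_0)}_{2,*}$ will then follow from Banach--Ne\v{c}as~\cite[Theorem~25.9]{Ern.Guermond:21b}, provided boundedness, inf-sup stability and nondegeneracy of $b_s$ are proved in sections~\ref{SS:InfSup-Strong} and \ref{SS:Nondegeneracy-strong}.

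Boundedness is immediate from Cauchy--Schwarz, the identity~\eqref{E:abstract-operator-laplacian-gradient} and the definition~\eqref{E:norm-1/2} of $\|\cdot\|_{H^{1/2}(\partial\Domain)}$. The inf-sup stability is the main obstacle and I expect it to require the most care. I would build a test function $x_{2,s}$ of the same shape as $y_{2,s}$ in section~\ref{SS:InfSup}, but now using $\calD^*$ instead of $\calE^{-1}\calD^*$ in the first component and the $L^2(\Domain)$-representative of $2\widetilde m + \partial_t \widetilde m + \calL \widetilde p$ in the fourth. The key new input replacing $\|\cdot\|_\bbU^2 = \langle \calE\cdot,\cdot\rangle_\bbU$ is the Brenner--Sung $H^2$-regularity estimate~\cite{Brenner.Sung:92} for the elasticity operator $\calQ = \calE + \lambda \calD^*\calD$ on the convex polygon~\eqref{E:regularity-assumption-domain} with Dirichlet data~\eqref{E:regularity-assumption-BCs}, which gives $\mu \|D^2 w\|_\Domain \lesssim \|\calQ w\|_\Domain$ with a constant independent of $\lambda$ as soon as $\mu \ll \lambda$ in the sense of~\eqref{E:regularity-assumption-parameters}. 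This is precisely what lets the first-equation residual control $\int_0^T \mu \|D^2 u\|_\Domain^2\,dt$ once it is combined with the constraint in the second equation. Under~\eqref{E:regularity-assumption-BCs} we moreover have $\bbD = \overline{\bbP}$, so the projections $\calP_\bbD, \calP_{\overline{\bbP}}$ disappear and the cross-term treatment in~\eqref{E:infsup-weak-proof} simplifies considerably; the negative $2\alpha(\calD u,p)_\Domain$ contributions still cancel pairwise in the analysis of $\mathfrak I_1$ and $\mathfrak I_2$, exactly as in Remark~\ref{R:critical-terms}.

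Nondegeneracy will follow a four-step argument parallel to section~\ref{SS:Nondegeneracy}: testing successively against $(0,\widetilde p_\Tot,0,0)$, $(\widetilde u,0,0,0)$, $(0,0,\widetilde p,0)$ and $(0,0,0,\widetilde m)$, using the invertibility and self-adjointness of $\calQ:\bbE\to L^2(\Domain)^2$ (again from Brenner--Sung), the Hilbert-triplet structure $\bbL\subseteq\overline{\bbP}\equiv\overline{\bbP}^*$, the formula~\eqref{E:normal-derivative} to identify $n_\partial$ with the trace of $n$, and an integration by parts in time combined with~\eqref{E:abstract-operator-laplacian-gradient} to finally obtain $q = 0$ and then the vanishing of all other components.

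Once these three lemmas are in place, the proof of the theorem itself is purely bookkeeping: $\Normtr{x_1}_1$ is equivalent to a stronger norm that, because $x_1$ solves~\eqref{E:BiotProblem-strong-formulation} exactly and thus annihilates the middle two equations and the boundary condition, collapses to the left-hand side of~\eqref{E:well-posedness-strong}; the right-hand side is just $\Normtr{(f_u,0,0,f_p,g_p,\ell_0)}_{2,*}$ read off from~\eqref{E:test-norm-strong}. Finally, the continuity $m \in C^0(\overline{\bbP})$ follows from the embedding $H^1(\overline{\bbP})\subseteq C^0(\overline{\bbP})$, extended from $\bbX_1$ to $\overline{\bbX}_1$ by density exactly as in Remark~\ref{R:continuity-m}.
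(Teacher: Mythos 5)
Your high-level blueprint (rewrite as a variational problem, prove boundedness, inf-sup stability and nondegeneracy, invoke Banach--Ne\v{c}as, and then read off the two-sided estimate and the $C^0$-continuity of $m$) matches the paper exactly, and you correctly identify both that $\bbD=\overline{\bbP}$ collapses the projections and that Brenner--Sung is the new input for $H^2$-regularity. However there are two concrete problems in your plan for the inf-sup step.

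First, the test function. The paper does not keep the weak-case shape $\widetilde u + (\dots)\widetilde p_\Tot$; the first component of $x_{2,s}$ is the scaled first-equation residual $\tfrac{C}{\mu}\bigl(\calE\widetilde u + \calD^*\widetilde p_\Tot\bigr)\chi_s$, so that $\mathfrak I_1 = \tfrac{C}{\mu}\int_0^s\Norm{\calE\widetilde u + \calD^*\widetilde p_\Tot}_\Omega^2\dt$ comes out as a clean squared $L^2$-norm. Replacing $\calE^{-1}\calD^*$ by $\calD^*$ in $y_{2,s}$, as you suggest, gives $\widetilde u + \calD^*\widetilde p_\Tot$, and the resulting pairing $(\calE\widetilde u+\calD^*\widetilde p_\Tot,\,\widetilde u + \calD^*\widetilde p_\Tot)_\Omega$ contains the uncontrolled cross term $(\calE\widetilde u,\calD^*\widetilde p_\Tot)_\Omega$ and does not produce anything you can close the argument with.

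Second, and more importantly, you cannot use the Brenner--Sung inequality $\mu\Norm{D^2 w}_\Omega \lesssim \Norm{\calQ w}_\Omega$ as a black box to bound $\int \mu\Norm{D^2\widetilde u}_\Omega^2$ from the residuals. Writing $\calQ\widetilde u = (\calE\widetilde u+\calD^*\widetilde p_\Tot) + \calD^*(\lambda\calD\widetilde u - \widetilde p_\Tot - \alpha\widetilde p) + \alpha\calD^*\widetilde p$ leaves the term $\alpha\Norm{\calD^*\widetilde p}_\Omega$, whose contribution cannot be absorbed uniformly in the material parameters: for $\sigma=0$ one has $\alpha = \sqrt{(\mu+\lambda)\gamma^{-1}}$, so $\alpha^2\Norm{\calD^*\widetilde p}_\Omega^2 = (\mu+\lambda)\,\gamma^{-1}\Norm{\calD^*\widetilde p}_\Omega^2$, and comparing with the $\gamma^{-1}\Norm{\calD^*\widetilde p}_\Omega^2$ available in the strengthened norm costs a factor $1+\lambda/\mu$. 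The cancellation with the corresponding term in $\mathfrak I_2$, which you correctly anticipate, does not materialise if you feed the residual into Brenner--Sung as a black box, because the estimate returns only a norm bound and not the specific signed cross term $+2\alpha\mu(\calD^*\widetilde p,\,\calD^*\calD\widetilde u)_\Omega$ that is needed. What the paper actually does (and what Brenner--Sung themselves do internally) is \emph{open up} the argument: construct a divergence-preserving $v'\in L^2(\bbE)$ with $\Norm{D^2 v'}_\Omega\lesssim\Norm{\calD^*\calD\widetilde u}_\Omega$, write $\calE = -\mu(\Delta + \nabla\Div)$, invoke Stokes regularity on a convex polygon (Kellogg--Osborn) to get $\mu^2\Norm{D^2\widetilde u}_\Omega^2 + \Norm{\calD^*(\widetilde p_\Tot+\mu\calD\widetilde u)}_\Omega^2 \lesssim \Norm{\calE\widetilde u+\calD^*\widetilde p_\Tot}_\Omega^2 + \mu^2\Norm{\calD^*\calD\widetilde u}_\Omega^2$, and only then expand $\Norm{\calD^*(\widetilde p_\Tot+\mu\calD\widetilde u)}_\Omega^2$ so that the cross term $-2\alpha\mu(\calD^*\widetilde p,\,\calD^*\calD\widetilde u)_\Omega$ appears explicitly and can cancel against $\mathfrak I_2$. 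Without this decomposition the argument does not close with constants independent of $\lambda/\mu$.
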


\begin{proof}
The equations \eqref{E:BiotProblem-strong-formulation} are equivalent to a linear variational problem like \eqref{E:BiotProblem-variational} with the bilinear form $b: \overline \bbX_1 \times \bbX_2 \to \R$
\begin{equation}
\label{E:BiotProblem-abstract-form-strong}
\begin{split}
b(&\widetilde x_1, x_2) = \int_0^T \Big( ( \calE \widetilde u +
\calD^* \widetilde p_\Tot,\, v )_\Domain + (
\partial_t \widetilde m + \calL \widetilde p,\, n )_\Domain
\Big)\dt
\\
+ &\int_0^T \Big ( \left( \calD^*(\lambda \calD \widetilde u - \widetilde
p_\Tot - \alpha \widetilde p),\, \calD^*q_\Tot\right)_\Domain +
\left( \calD^*(\alpha \calD \widetilde u + \sigma \widetilde p - \widetilde m),\, \calD^*q
\right)_\Domain   \Big )\dt \\
+ &\int_0^T \left\langle \partial_\Normal \widetilde p,\, n_\partial\right\rangle_{1/2}   + ( \widetilde m(0),\, n_0 )_\Domain
\end{split} 
\end{equation}
and the load $\ell: \bbX_2 \to \R$
\begin{equation}
\label{E:BiotProblem-abstract-load-strong}
\ell(x_2) = \int_0^T \Big((f_u,\, v)_\Domain + (f_p,\, n)_\Domain + \left\langle g_p,\, n_\partial\right\rangle_{1/2}  \big)\dt + ( \ell_0,\, n_0 )_\Domain 
\end{equation}
for $\widetilde x_1 = (\widetilde u, \widetilde p_\Tot, \widetilde p, \widetilde m) \in \overline{\bbX}_1$ and $x_2 = (v, q_\Tot, q, n, n_\partial, n_0) \in \bbX_2 $. The boundedness of the bilinear form with respect to the norm $\Normtr{\cdot}_1$ and $\Normtr{\cdot}_2$ follows from Cauchy-Schwartz inequalities. Section~\ref{SS:InfSup-Strong} establishes the (strengthened) inf-sup stability
\begin{equation}
\label{E:inf-sup-strong}
\begin{split}
&\sup_{x_2 \in \bbX_2} \dfrac{b(\widetilde x_1, x_2)}{\Normtr{x_2}_2} \gtrsim\\
&\qquad\qquad\left(  \Normtr{\widetilde x_1}_1^2 +
\dfrac{1}{\kappa}\Norm{\widetilde m}^2_{L^\infty(\overline\bbP)} + \int_{0}^T \left( \lambda
\Norm{\calD^*\calD \widetilde u}^2_\Domain + \frac{1}{\gamma} \Norm{\calD^*\widetilde p
}^2_\Domain\right) \dt \right)^{\frac{1}{2}}  
\end{split}
\end{equation}
for $\widetilde x_1 \in \overline \bbX_1 $. Section \ref{SS:Nondegeneracy-strong} further verifies the nondegeneracy of $b$. The combination of these properties implies the well-posedness of the equations \eqref{E:BiotProblem-strong-formulation} by the Banach-Ne\u{c}as-Babu\v{s}ka theorem \cite[theorem~25.9]{Ern.Guermond:21b}. The estimate \eqref{T:well-posedness-strong} then follows by combining boundedness and inf-sup stability with the definitions \eqref{E:BiotProblem-abstract-load-strong} and \eqref{E:test-norm-strong} of the load and of the test norm $\Normtr{\cdot}_2$. Finally, the continuity in time of $m$ can be verified by arguing as in Remark~\ref{R:continuity-m}. 
\end{proof}

The combination of Theorem~\ref{T:well-posedness-strong} with Lemma~\ref{L:strong-vs-weak} implies the main result in this section, which establishes additional regularity in space of the solution of the weak formulation \eqref{E:BiotProblem-weak-formulation} with correspondingly more regular data.

\begin{corollary}[Regularity in space]
\label{C:regularity}
Let $(f_u, f_p, g_p, \ell_0)$ be as in \eqref{E:data-strong}. Denote
by $y_1 = (u, p_\Tot, p, m) \in \overline \bbY_1$ the  solution of the
equations \eqref{E:BiotProblem-weak-formulation} with the data
$\ell_u$ and $\ell_p$ defined by
\eqref{E:abstract-data-concrete}. Then we have $y_1 \in \overline
\bbX_1$ and $y_1$ fulfills \eqref{E:well-posedness-strong}. 
\end{corollary}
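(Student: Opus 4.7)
The corollary is a direct consequence of chaining Theorem~\ref{T:well-posedness-strong} with Lemma~\ref{L:strong-vs-weak} and the uniqueness part of Theorem~\ref{T:well-posedness}. The plan is to first invoke Theorem~\ref{T:well-posedness-strong} with the given data $(f_u, f_p, g_p, \ell_0)$ satisfying \eqref{E:data-strong}. This produces a unique solution $x_1 = (u^s, p_\Tot^s, p^s, m^s) \in \overline{\bbX}_1$ of the strong formulation \eqref{E:BiotProblem-strong-formulation}, together with the two-sided bound \eqref{E:well-posedness-strong}.

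Next, I would apply Lemma~\ref{L:strong-vs-weak} to this $x_1$. The lemma guarantees that $x_1$ solves the weak formulation \eqref{E:BiotProblem-weak-formulation} with the loads $\ell_u$ and $\ell_p$ constructed from the data via \eqref{E:abstract-data-concrete}, viewed as elements of $L^2(\bbU^*) \times L^2(\bbP^*)$. One should briefly verify that the resulting loads indeed have the regularity required by Theorem~\ref{T:well-posedness}, which is straightforward: the inclusion $L^2(L^2(\Domain)^2) \hookrightarrow L^2(\bbU^*)$ handles $f_u$, while $f_p \in L^2(\overline{\bbP}) \hookrightarrow L^2(\bbP^*)$ together with $g_p \in L^2(H^{-1/2}(\partial \Domain))$ (interpreted as a boundary distribution) assembles into an admissible $\ell_p \in L^2(\bbP^*)$; analogously $\ell_0 \in \overline{\bbP} \hookrightarrow \bbP^*$.

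By the uniqueness of the weak solution established in Theorem~\ref{T:well-posedness}, the element $y_1 \in \overline{\bbY}_1$ of the statement must coincide with $x_1$. Since $x_1 \in \overline{\bbX}_1$, this yields $y_1 \in \overline{\bbX}_1$, and the stability bound \eqref{E:well-posedness-strong} is inherited verbatim from Theorem~\ref{T:well-posedness-strong} applied to $x_1 = y_1$.

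There is no real obstacle here: all the substantive work, namely the design of the norms $\Normtr{\cdot}_1$ and $\Normtr{\cdot}_2$, and the boundedness, inf-sup stability \eqref{E:inf-sup-strong}, and nondegeneracy (proved in sections~\ref{SS:InfSup-Strong} and \ref{SS:Nondegeneracy-strong}), is already encapsulated in Theorem~\ref{T:well-posedness-strong}. The compatibility between the two formulations, which is the only nontrivial bridge, has been prepared in Lemma~\ref{L:strong-vs-weak}. The mildest point of care is to ensure that the identification of $f_u \in L^2(L^2(\Domain)^2)$ with an element of $L^2(\bbU^*)$ in \eqref{E:abstract-data-concrete} is the same one used in Lemma~\ref{L:strong-vs-weak} (via the $L^2(\Domain)$-scalar product and integration by parts against the boundary data), so that uniqueness can actually be applied to conclude $y_1 = x_1$.
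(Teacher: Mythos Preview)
Your proposal is correct and follows essentially the same approach as the paper: invoke Theorem~\ref{T:well-posedness-strong} to obtain a strong solution $x_1 \in \overline{\bbX}_1$, use Lemma~\ref{L:strong-vs-weak} to see that $x_1$ also solves the weak formulation, and then conclude $y_1 = x_1$ by the uniqueness part of Theorem~\ref{T:well-posedness}. The additional regularity check you spell out for the loads $(\ell_u, \ell_p, \ell_0)$ is left implicit in the paper but is indeed straightforward.
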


\begin{proof}
Owing to Theorem~\ref{T:well-posedness-strong}, the equations
\eqref{E:BiotProblem-strong-formulation}, with the given data, admit a
unique solution $x_1 \in \overline{\bbX}_1$. By
Lemma~\ref{L:strong-vs-weak}, $x_1$ solves also the equations
\eqref{E:BiotProblem-weak-formulation}. Then, we have $x_1 = y_1$
according to Theorem~\ref{T:well-posedness}. This confirms that $y_1$
is in $\overline{\bbX}_1$ and fulfills the estimate
\eqref{E:well-posedness-strong} in
Theorem~\ref{T:well-posedness-strong}. 
\end{proof}

\subsection{Inf-sup stability}
\label{SS:InfSup-Strong}

This section establishes the inf-sup stability
\eqref{E:inf-sup-strong} of the bilinear form $b$ in
\eqref{E:BiotProblem-abstract-form-strong}. For this purpose, let
$\widetilde x_1 = (\widetilde u, \widetilde p_\Tot, \widetilde p,
\widetilde m) \in \bbX_1$. We proceed analogously to
Section~\ref{SS:InfSup}, therefore, we mention only the main aspects of
the argument.  

Let $s \in [0, T]$ be arbitrary and denote by $\chi_s:[0,T] \to \R$
the indicator function on $[0, s]$. We consider the test function
$x_{2,s} = (v, q_\Tot, q, n, n_\partial, n_0) \in \bbX_2$ defined by 
\begin{equation}
\label{E:infsup-strong-test-function}
\begin{alignedat}{2}
v &= \dfrac{K_1}{\mu}\Big( \calE \widetilde u + \calD^* \widetilde
p_\Tot\Big)\chi_s \quad && \in L^2(L^2(\Omega)^2)
\\
q_\Tot &= \Big(\dfrac{2}{\lambda}\big(\lambda \calD \widetilde u -
\widetilde p_\Tot - \alpha \widetilde p\big) - \frac{\alpha K_2}{\lambda} \widetilde p \Big) \chi_s \quad &&\in
L^2(\mathbb{P})
\\
q &= \frac{5\gamma}{K_2} \Big( \alpha \calD \widetilde u + \sigma \widetilde p
- \widetilde m  \Big)\chi_s \quad && \in L^2(\mathbb{P})
\\
n &= \dfrac{1}{\kappa}\Big( 2 \widetilde m + s(\partial_t \widetilde m + \calL \widetilde
p )\Big)\chi_s \quad && \in L^2(\overline{\bbP})
\\
n_{\partial \Omega} &= \dfrac{7}{\gamma} \Big(\mathcal
R^{-1}_\partial \partial_\Normal \widetilde p\Big)\chi_s && \in
L^2(H^{-\frac{1}{2}}(\partial \Omega)) 
\\
n_0 &= \dfrac{2}{\kappa} \widetilde m(0) && \in \overline{\bbP}
\end{alignedat}
\end{equation} 
with the constants $K_1, K_2$ to be determined later and $\mathcal{R}_\partial:
H^{\frac{1}{2}}(\partial \Domain) \to H^{-\frac{1}{2}}(\partial
\Domain)$ denoting the Riesz isometry. 

Using this test function in~\eqref{E:BiotProblem-abstract-form-strong} yields
\begin{equation}
\label{E:infsup-strong-basic-estimate}
\begin{alignedat}{2}
b(\widetilde x_1, x_{2,s}) & = \dfrac{K_1}{\mu}\int_0^s \| \calE\widetilde u + \calD^* \widetilde p_\Tot \|^2_\Omega\dt \qquad \qquad && (=: \mathfrak{I}_1) \\
& + \dfrac{2}{\lambda}\int_0^s \|\calD^*( \lambda \calD \widetilde u - \widetilde p_\Tot - \alpha \widetilde p )\|_\Omega \dt&&\\
& - \dfrac{\alpha K_2}{\lambda}\int_0^s (\calD^*( \lambda \calD \widetilde u - \widetilde p_\Tot - \alpha \widetilde p ), \calD^* \widetilde p)_\Omega^2 \dt&\qquad&  (=: \mathfrak{I}_2)\\
& + \frac{5\gamma}{K_2} \int_0^s \| \calD^*(\alpha \calD \widetilde u + \sigma \widetilde p - \widetilde m) \|_\Omega^2 \dt&&\\
& + \dfrac{s}{\kappa} \int_0^s \|\partial_t \widetilde m + \calL \widetilde p\|^2_\Omega \dt&&\\
& + \dfrac{2}{\kappa} \int_0^s (\partial_t \widetilde m + \calL \widetilde p,\, \widetilde m)_\Omega \dt && (=: \mathfrak{I}_3)\\
& + \dfrac{7}{\gamma} \int_0^s \| \partial_\Normal \widetilde p \|^2_{H^{-\frac{1}{2}}(\partial \Omega)}\\ 
& + \dfrac{2}{\kappa} \| \widetilde m(0) \|^2_\Omega\dt.
\end{alignedat}
\end{equation}

We aim at bounding $\mathfrak{I}_1$, $\mathfrak{I}_2$ and $\mathfrak{I}_3$ from below. For
the first one, we mimic the argument in the proof of
\cite[Lemma~2.2]{Brenner.Sung:92}.  By
\cite[Lemma~2.1]{Brenner.Sung:92}, there is $v' \in L^2(\bbE)$ such
that 
 \begin{equation}
\label{E:infsup-strong-rightinverse}
\calD v' = \calD \widetilde u \quad \text{in $L^2(\bbP)$}
\qquad  \text{and} \qquad 
\Norm{D^2 v'}_\Domain \lesssim \Norm{\calD^* \calD \widetilde u}_\Domain
\end{equation}
with the hidden constant depending only on $\Domain$. The identity
$\calE = -\mu (\Delta + \nabla \Div )$ in $\bbE$ and simple
manipulations reveal 
\begin{equation*}
\begin{alignedat}{2}
-\mu \Delta ( \widetilde u - v' ) + \calD^*( \widetilde p_\Tot +
\mu\calD \widetilde u ) &= \widetilde f + \mu \Delta v' \quad &&\text{
  in $L^2([0, s], \,L^2(\Domain)^2)$}
 \\
\calD(\widetilde u - v') & = 0 &&\text{ in $L^2([0,s],\, \bbP)$}.
\end{alignedat}
\end{equation*}
with $\widetilde f := \calE \widetilde u + \calD^* \widetilde
p_\Tot$. By the regularity theory for the Stokes equations
\cite[Theorem~2]{Kellogg.Osborn:76}, a triangle inequality and
\eqref{E:infsup-strong-rightinverse}, we have 
\begin{equation}\label{eq:StokesEst}
\int_0^s\Big( \mu^2\|D^2 \widetilde u\|^2_\Omega + \| \calD^* (\widetilde p_{\Tot} +\mu \calD \widetilde u)\|^2_\Omega \Big)\dt 
\lesssim 
\int_0^s\Big( \| \widetilde f \|^2_\Omega + \mu^2 \Norm{\calD^*\calD \widetilde u}_\Domain^2 \Big).
\end{equation}
Again, the hidden constant depends only on $\Domain$. For the second
term on the left-hand side, we use triangle and Young's inequalities 
\begin{equation*}
\begin{split}
\| \calD^* (\widetilde p_{\Tot} + \mu \calD \widetilde u)\|^2_\Omega &= 
\| \calD^* \widetilde p_\Tot\|^2_\Omega + \mu^2 \| \calD^* \calD
\widetilde u\|^2_\Omega + 2\mu(\calD^* \widetilde p_\Tot,\, \calD^*
\calD \widetilde u)_\Omega
\\
&\geq \| \calD^* \widetilde p_\Tot\|^2_\Omega + \mu(\mu + \lambda) \|
\calD^*\calD \widetilde u\|^2_\Omega - 2\alpha\mu(\calD^* \widetilde
p,\, \calD^*\calD \widetilde u)_\Omega
\\
&\hphantom{=}- \dfrac{\mu}{\lambda}\|\calD^* (\lambda \calD \widetilde u - \widetilde p_\Tot - \alpha \widetilde p)\|^2_\Omega.
\end{split}
\end{equation*}
We choose $K_1$ to be the hidden constant in~\eqref{eq:StokesEst} and assume that 
the constant $\Lambda$ in~\eqref{E:regularity-assumption-parameters} is large enough to satisfy $2K_1 - 1 \leq \Lambda$, so that we have $K_1\mu\le \frac{\mu+\lambda}2$. Then inserting the last estimate into the previous one yields 
\begin{equation*}
\begin{split}
\mathfrak{I}_1 = \frac{K_1}{\mu} \int_0^s\| \widetilde f \|^2_\Omega&\geq  \int_0^s  \Big( -2\alpha(\calD^* \calD
\widetilde u,\, \calD^* \widetilde p)_\Omega + \mu \|D^2 \widetilde
u\|^2_\Omega + \frac{\mu + \lambda}2 \|\calD^*\calD \widetilde u
\|^2_\Omega\Big)\dt
\\
&+ \int_0^s\Big(  \dfrac{1}{\mu}\| \calD^* \widetilde
p_{\Tot}\|^2_\Omega - \dfrac{1}{\lambda}\|\calD^* (\lambda \calD
\widetilde u - \widetilde p_\Tot - \alpha \widetilde p)\|^2_\Omega
\Big)\dt.  
\end{split}
\end{equation*}

We estimate $\mathfrak I_2$ from below by Young's inequality 
\begin{equation*}
\begin{split}
\mathfrak I_2 \geq &\int_0^s \left( -\frac{\lambda}{4}\Norm{\calD^* \calD \widetilde u}^2_\Domain - \frac{1}{2\mu} \Norm{\calD^* \widetilde p_\Tot}^2_\Domain + \frac{\alpha^2 K_2}{\lambda}\Big( 1 - K_2 - \frac{K_2 \mu}{2\lambda}  \Big) \Norm{\calD^* \widetilde p}^2_\Domain \right)\dt\\
\geq & \int_0^s \left( -\frac{\lambda}{4}\Norm{\calD^* \calD \widetilde u}^2_\Domain - \frac{1}{2\mu} \Norm{\calD^* \widetilde p_\Tot}^2_\Domain + \frac{3\alpha^2 K_2}{4\lambda} \Norm{\calD^* \widetilde p}^2_\Domain \right)\dt.
\end{split}
\end{equation*}
The second lower bound holds true upon assuming that $K_2$ is small enough so that $K_2 (1 + \frac{1}{2\Lambda})\leq \frac{1}{4} $.

Regarding $\mathfrak{I}_3$, the integration by parts rule
\cite[Lemma~64.40]{Ern.Guermond:21c} and the identities
\eqref{E:abstract-operator-laplacian-gradient} and
\eqref{E:normal-derivative} reveal 
\begin{equation*}
\begin{split}
\mathfrak{I}_3 = \dfrac{1}{\kappa} \| \widetilde m(s) \|^2_\Omega -
\dfrac{1}{\kappa} \| \widetilde m(0) \|^2_\Omega + 2 \int_0^s \Big(
(\calD^* \widetilde p,\, \calD^*\widetilde m)_\Omega - \left\langle
  \partial_\Normal \widetilde p,\, \widetilde m \right\rangle_{1/2}
\Big)\dt. 
\end{split}
\end{equation*}
We bound the last term on the right-hand side, using \eqref{E:gamma} with $\overline{\bbP}=\bbD$,
\eqref{E:norm-1/2} and various Young's inequalities, by
\begin{equation*}
\begin{split}
\left\langle \partial_\Normal \widetilde p,\, \widetilde m \right\rangle_{1/2}
&\leq 
\dfrac{\sigma}{2} \Norm{\calD^* \widetilde p}_\Domain^2
+
\dfrac{\mu + \lambda}{6} \Norm{\calD^* \calD \widetilde u}_\Domain^2
\\
&+
\dfrac{3}{\gamma} \Norm{\partial_\Normal \widetilde p}_{H^{-\frac{1}{2}}(\partial \Domain)}^2 
+ 
\dfrac{\gamma}{4} 
 \| \calD^*(\alpha \calD \widetilde u + \sigma \widetilde p - \widetilde m) \|_\Omega^2.
\end{split}
\end{equation*}
Another application of Young's inequality and \eqref{E:gamma} yield
\begin{equation*}
\begin{split}
&(\calD^* \widetilde p,\, \calD^* \widetilde m)_\Domain \geq\\ 
& \qquad \qquad \Big( \sigma - \frac{\alpha^2 K_2}{4\lambda}\Big) \Norm{\calD^* \widetilde p}^2_\Domain - \frac{\gamma}{K_2} \Norm{\calD^*(\alpha \calD \widetilde u + \sigma \widetilde p - \widetilde m)}^2_\Domain + \alpha (\calD^* \calD \widetilde u, \calD^* \widetilde p)_\Domain.
\end{split}
\end{equation*}
Using these bounds into the previous identity and assuming $K_2 \leq 4$, it results
\begin{equation*}
\begin{split}
\mathfrak{I}_3 &\geq \dfrac{1}{\kappa} \| \widetilde m(s) \|^2_\Omega
- \dfrac{1}{\kappa} \| \widetilde m(0) \|^2_\Omega\\
&\hphantom{\geq} +\int_0^s \Big(
\big(\sigma -\frac{\alpha^2 K_2}{2\lambda}) \|\calD^* \widetilde p \|^2_\Domain  + 2\alpha (\calD^* \calD\widetilde u,\,
\calD^*\widetilde p)_\Omega\Big)\dt
\\
&\hphantom{\geq} -\int_0^s \Big(\dfrac{\mu + \lambda}{3} \|
\calD^*\calD\widetilde u \|^2_\Omega  + \dfrac{6}{\gamma}\| \partial_\Normal \widetilde p
\|^2_{H^{-\frac{1}{2}}(\partial \Omega)} + \frac{4\gamma}{K_2}\| \calD^*(\alpha \calD \widetilde u + \sigma \widetilde p - \widetilde m) \|_\Omega^2\Big). 
\end{split}
\end{equation*}

We combine the lower bounds for $\mathfrak{I}_1$, $\mathfrak{I}_2$ and $\mathfrak{I}_3$ with \eqref{E:infsup-strong-basic-estimate}. Recalling also the definition of $\gamma$ in \eqref{E:gamma}, we obtain
\begin{equation*}
b(\widetilde x_1, x_{2,s}) 
\gtrsim
\Normtr{\widetilde x_1}^2_1 
+ 
\dfrac{1}{\kappa}\Norm{\widetilde m(s)}^2_\Domain 
+ 
\int_0^s \Big(\lambda \| \calD^*\calD \widetilde u \|^2_\Omega 
+ 
\frac{1}{\gamma} \| \calD^* \widetilde p\|^2_\Omega \Big)\dt .
\end{equation*}
This establishes
\eqref{E:inf-sup-strong} upon taking the test function $x_2 =
x_{2,\overline s} + x_{2,T}$, where $\overline s$ is such that
$\|\widetilde m(\overline s)\|_\Omega = \| \widetilde m \|_{L^\infty(\overline\bbP)} $. Indeed, for all $s \in [0, T]$, the norm of $x_{2,s}$ is bounded by

\begin{equation*}
\begin{split}
\Normtr{x_{2,s}}_2^2 
&\lesssim 
\Normtr{\widetilde x_1}_1^2 + \int_0^T \Big( T^{-1} \| \widetilde m \|^2_\Omega + \lambda \Norm{\calD^* \calD \widetilde u}^2_\Domain\Big)\\
&\lesssim
\Normtr{\widetilde x_1}_1^2 + \| \widetilde m \|^2_{L^\infty(\overline{\bbP})} + \lambda\int_0^T \Norm{\calD^* \calD \widetilde u}^2_\Domain
\end{split}
\end{equation*}
and the hidden constant does not depend on $s$. The proof is identical to the corresponding one at the end of Section~\ref{SS:InfSup}. The combination of this estimate with the previous lower bound establishes \eqref{E:inf-sup-strong}.

\subsection{Nondegeneracy}
\label{SS:Nondegeneracy-strong}
This section establishes the nondegeneracy of the bilinear form $b$ in \eqref{E:BiotProblem-abstract-form-strong}. For this purpose, assume
\begin{equation}
\label{E:nondegeneracy-strong}
b(\widetilde x_1, x_2) = 0
\end{equation}
for all $\widetilde x_1 \in \overline \bbX_1$ and for some $x_2 = (v, q_\Tot, q, n, n_\partial, n_0) \in \bbX_2$. We proceed in analogy with Section~\ref{SS:Nondegeneracy} in order to verify $x_2 = 0$. Therefore, we mention only the main aspects of the argument.

Taking $\widetilde x_1 = (0, \widetilde p_\Tot, 0, 0)$, the identity \eqref{E:nondegeneracy-strong} implies
\begin{equation}
\label{E:nondegeneracy-strong-identity0}
\int_0^T (\calD^* \widetilde p_\Tot,\, v)_\Domain\dt
=
\int_{0}^T (\calD^* \widetilde p_\Tot,\, \calD^*q_\Tot)_\Domain
\end{equation}
for all $\widetilde p_\Tot \in L^2(\bbP)$.

Taking $\widetilde x_1 = (\widetilde u, 0,0,0)$, the identity \eqref{E:nondegeneracy-strong}, the inclusion $\calD \widetilde u \in L^2(\bbP)$ and \eqref{E:nondegeneracy-strong-identity0} imply
\begin{equation*}
\int_0^T (\calQ \widetilde u,\, v)_\Domain = -\int_0^T\alpha(\calD^*\calD \widetilde u,\, \calD^* q)_\Domain \dt
\end{equation*}
for all $\widetilde u \in L^2(\bbE)$, with $\calQ = \calE + \lambda
\calD^* \calD$ the operator involved in the displacement formulation
of the linear elasticity equations. According to
\cite[Lemma~2.2]{Brenner.Sung:92}, $\calQ$ is a one-to-one mapping
from $\bbE$ to $L^2(\Domain)^2$. Thus, the adjoint $\calQ^\star:
L^2(\Domain)^2 \to \bbE^*$ is invertible. Here `$\star$' denotes the
ajoint with respect to the $L^2(\Domain)$-scalar product, while `$*$'
indicates the one with respect to the duality $\left\langle \cdot,
  \cdot\right\rangle_\bbU$. Thus, 
\begin{equation}
\label{E:nondegeneracy-strong-identity1}
v = -\alpha \calQ^{-\star} (\calD^*\calD)^\star \calD^* q \qquad \text{ in $L^2(L^2(\Domain)^2)$}.
\end{equation}

Taking $\widetilde x_1 = (0, 0, \widetilde p, 0)$, the identities \eqref{E:abstract-operator-laplacian-gradient}, \eqref{E:normal-derivative} and \eqref{E:nondegeneracy-strong} imply
\begin{equation*}
\int_0^T \Big( (\calL \widetilde p,\, \kappa n -\alpha q_\Tot + \sigma
q)_\Domain  + \kappa \left\langle \partial_\Normal \widetilde p,\,
  n_\partial -\alpha q_\Tot + \sigma q\right\rangle_{1/2} \Big)\dt = 0 
\end{equation*}
for all $\widetilde p \in L^2(\bbL)$. We consider, in particular, the solution $\widetilde p$ of the Neumann problem 
\begin{equation*}
\begin{alignedat}{2}
\calL \widetilde p &= \kappa n - \alpha q_\Tot + \sigma q \; &&\text{ in $L^2(\overline{\bbP})$}\\
\partial_\Normal \widetilde p &= \mathcal{R}_\partial (n_\partial - \alpha q_\Tot + \sigma q) \; &&\text{ in $L^2(H^{-\frac{1}{2}}(\partial \Domain))$}
\end{alignedat}
\end{equation*}
where $\mathcal{R}_\partial: H^{\frac{1}{2}}(\partial \Domain) \to H^{-\frac{1}{2}}(\partial \Domain)$ is the Riesz isometry. We infer $n \in L^2(\bbP)$ with
\begin{equation}
\label{E:nondegeneracy-strong-identity2}
\kappa n = \alpha q_\Tot - \sigma q \quad \text{ in $L^2(\bbP)$}
\qquad \text{and} \qquad
\kappa n_{|\partial \Domain} = n_\partial \quad \text{ in $L^2(H^{\frac{1}{2}}(\Domain))$}.
\end{equation}

Finally, taking $\widetilde x_1 = (0, 0, 0, \widetilde m)$, the identities \eqref{E:abstract-operator-laplacian-gradient} and \eqref{E:nondegeneracy-strong} imply
\begin{equation*}
\int_{0}^T\Big( -\dfrac{1}{\kappa}\left\langle \widetilde m,\, \calL q\right\rangle _\bbP + (  \partial_t \widetilde m,\, n )_\Omega \Big)\dt + (\widetilde m(0),\, n_0)_\Domain = 0
\end{equation*}
for all $\widetilde m \in L^2(\bbP) \cap H^1(\overline \bbP)$. Thus, arguing as in the derivation of \eqref{E:nondegeneracy-identity3}-\eqref{E:nondegeneracy-identity4}, we infer $n \in H^1(\bbP^*)$ with 
\begin{equation}
\label{E:nondegeneracy-strong-identity3}
\partial_t n = -\dfrac{1}{\kappa}\calL q \quad \text{ in $L^2(\bbP^*)$}
\qquad \text{and} \qquad
n(0) = n_0, \; n(T) = 0 \quad \text{ in $\overline \bbP$}.
\end{equation}

We combine \eqref{E:nondegeneracy-strong-identity2} with
\eqref{E:nondegeneracy-strong-identity3}, and exploit also
\eqref{E:abstract-operator-laplacian-gradient},
\eqref{E:nondegeneracy-strong-identity0} as well as 
\eqref{E:nondegeneracy-strong-identity1} 
\begin{equation*}
\begin{split}
-\dfrac{\kappa}{2} \Norm{n_0}^2_\Domain 
&= \kappa\int_0^T \left\langle \partial_t n,\, n\right\rangle_\bbP \dt 
= \dfrac{1}{\kappa}\int_0^T \left\langle \calL q,\, \sigma q - \alpha q_\Tot\right\rangle_\bbP\dt\\
&= \int_0^T \Big( \sigma\Norm{	\calD^*q}^2_\Domain + \alpha^2 (\calD^*\calD \calQ^{-1}\calD^*q,\, \calD^* q)_\Domain\Big)\dt . 
\end{split} 
\end{equation*}

We deal with the second term on the right-hand side by following once
again the proof of \cite[Lemma~2.2]{Brenner.Sung:92}. Recalling $\calQ
= -\mu\Delta + (\mu + \lambda) \calD^* \calD$ in $\bbE$, it follows
that 
\begin{equation*}
\int_0^T(\calD^*\calD \calQ^{-1}\calD^*q,\, \calD^* q)_\Domain\dt = \dfrac{1}{\mu + \lambda} \int_0^T\Big( \Norm{\calD^*q}^2_\Domain + \mu(\Delta u',\, \calD^*q)_\Domain \Big)\dt
\end{equation*}
with $u' := \calQ^{-1}\calD^* q \in L^2(\bbE)$
and consequently we have that
\begin{align}\label{E:nondegeneracy-strong-almost-concluded3}
    0=\dfrac{\kappa}{2} \Norm{n_0}^2_\Domain +\int_0^T \Big( \sigma\Norm{	\calD^*q}^2_\Domain + \dfrac{\alpha^2}{\mu + \lambda}\Norm{\calD^*q}^2_\Domain+ \dfrac{\alpha^2}{\mu + \lambda} \mu  (\Delta u',\, \calD^*q)_\Domain\Big)\dt. 
\end{align}    

By \cite[Lemma~2.1]{Brenner.Sung:92}, there is $v' \in L^2(\bbE)$ such that 
\begin{equation*}
\calD v' = \calD u' \quad \text{in $L^2(\bbP)$}
\qquad  \text{and} \qquad 
\Norm{D^2 v'}_\Domain \lesssim \Norm{\calD^* \calD u'}_\Domain
\end{equation*}
with the hidden constant depending only on $\Domain$. Then, the expression of $\calQ$ recalled above and simple manipulations reveal
\begin{equation*}
\begin{alignedat}{2}
-\mu \Delta ( u' - v' ) + \calD^*( (\mu +\lambda) \calD u' - q ) &= \mu \Delta v' \quad &&\text{ in $L^2(L^2(\Domain)^2)$}\\
\calD(u' - v') & = 0 &&\text{ in $L^2(\bbP)$}.
\end{alignedat}
\end{equation*}
By the regularity theory for the Stokes equations \cite[Theorem~2]{Kellogg.Osborn:76} and a triangle inequality, we have
\begin{equation*}
\begin{split}
\mu\Norm{D^2 u'}_\Domain + \Norm{\calD^*((\mu +\lambda) \calD u' - q )}_\Domain
&\lesssim 
\mu \Norm{\calD^* \calD u'}_\Domain
\\
&\leq \dfrac{\mu}{\mu + \lambda} \Big(  \Norm{\calD^*q} +
\Norm{\calD^*((\mu + \lambda) \calD u' - q )}_\Domain \Big). 
\end{split}
\end{equation*}
Again, the hidden constant depends only on $\Domain$.
  By virtue of~\eqref{E:regularity-assumption-parameters}, this actually implies
  \(\mu\|\Delta u'\|_\Omega\lesssim \frac{\mu}{\mu+\lambda} \|\calD^*
  q\|_\Omega\). 
Using a Cauchy-Schwartz inequality in~\eqref{E:nondegeneracy-strong-almost-concluded3}, this bound, in combination 
with  assumption~\eqref{E:regularity-assumption-parameters}, yields
\begin{equation*}
\begin{split}
\Big(\sigma + \dfrac{C\alpha^2}{\mu + \lambda}\Big)\int_0^T \Norm{	\calD^*q}^2_\Domain\dt + \dfrac{\kappa}{2} \Norm{n_0}^2_\Domain \le 0 
\end{split} 
\end{equation*}
for some positive constant $C > 0$. We conclude $x_2 = 0$ in \eqref{E:nondegeneracy-strong} by this
identity and
\eqref{E:nondegeneracy-strong-identity1}-\eqref{E:nondegeneracy-strong-identity3}.

\section{Conclusions and outlook}
\label{S:conclusions-outlook}

We have proposed a new approach and a corresponding
setting for the analysis of the quasi-static Biot's equations in
poroelasticity. In passing, we have relaxed the regularity assumptions
on the data formulated in previous references. The results here are
instrumental and tailored to our main goals, that are the design and
the analysis of discretizations enjoying accurate and robust error
bounds. To this end, we propose in \cite{Kreuzer.Zanotti:22+} a class
of discretizations inspired by the four-field formulation
\eqref{E:BiotProblem-equations-fourfield} and prove its stability by mimicking
the technique introduced in Section~\ref{S:WellPosedness}. The
stability estimate in Theorem~\ref{T:well-posedness} providesassumptionsassumptions a possible starting point
for the a posteriori error analysis. The regularity result in
Section~\ref{S:Regularity} is instrumental to the a priori error
analysis in \cite{Kreuzer.Zanotti:22+}, as it establishes the
regularity that is needed to infer first-order convergence in
space. Both the a posteriori analysis and the derivation of further
regularity results, e.g. relaxing the assumptions
\eqref{E:regularity-assumptions} or addressing the regularity in time,
may be the subject of future investigation.  

\subsection*{Acknowledgment} We are grateful to the reviewers for contributing to the revision process with many insightful comments.

\subsection*{Funding} Christian Kreuzer acknowledges funding by the Deutsche
For\-schungs\-ge\-mein\-schaft (DFG, German Research Foundation) -- 321270008. Pietro Zanotti was supported by the PRIN 2022 PNRR
project “Uncertainty Quantification of coupled models for water flow and contaminant transport”
(No. P2022LXLYY), financed by the European Union—Next Generation EU and by the GNCS-INdAM project CUP\_E53C23001670001.

\begin{figure}[htp]
	\includegraphics[width=0.9\hsize]{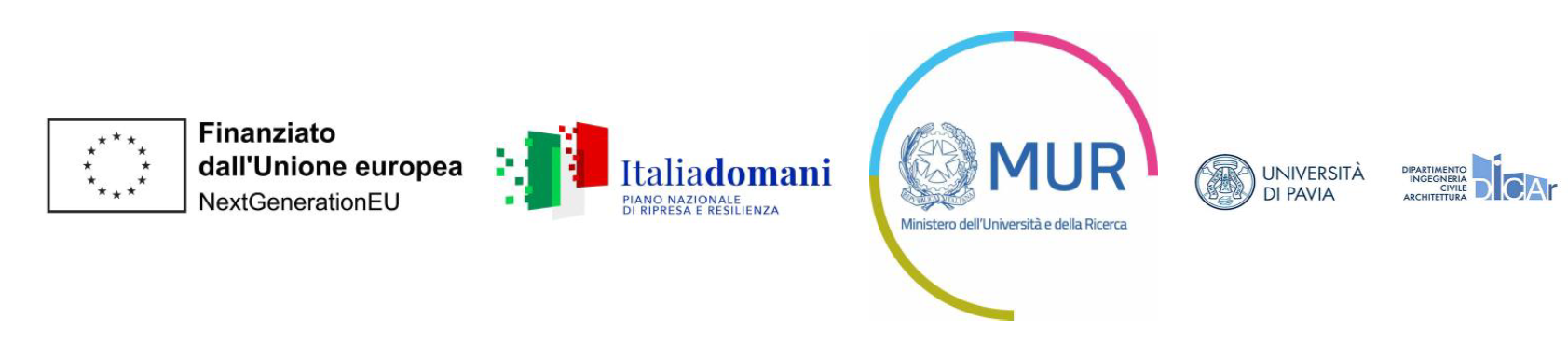}
\end{figure}

\end{document}